\documentclass[11pt]{amsart}
\numberwithin{equation}{section} %numerazione per sezioni
\usepackage{amssymb, amsmath, amsthm, latexsym}
\usepackage{amsrefs}

\usepackage{bookmark}

\usepackage{graphicx}
\usepackage{color}
\usepackage[a4paper, centering]{geometry}
\DeclareMathOperator{\Tr}{Tr}
\newcommand*{\Trace}[3][\pm]{\Tr^{#1}(#2, #3)}
\newcommand*{\Trp}[2]{\Trace[i]{#1}{#2}}
\newcommand*{\Trm}[2]{\Trace[e]{#1}{#2}}
\newcommand*{\Trl}[3][\lambda]{\Tr_{#1}(#2, #3)}
\newcommand*{\chiut}[1][]{\chi^{#1}_{\{u > t\}}}
\newcommand*{\jump}[1]{\Theta_{#1}}

\newcommand{\spacelambda}[1][\Omega]{\mathcal{B}(#1;[0,1])}

\def\nuint{\widetilde{\nu}}

\newcommand{\A}{\boldsymbol{A}}
\newcommand{\B}{\boldsymbol B}
\newcommand{\bsmall}{\boldsymbol b}
\newcommand{\ucl}[1][n]{u_{#1}^{(\lambda)}}
\newcommand{\F}{F}
\newcommand{\f}{f}
\def\vv{{\bf v}}

\def\DM{{\mathcal{DM}^{\infty}}}
\newcommand*{\BVL}[1][\R^N]{BV(#1)\cap L^{\infty}{(#1)}}

\def\BV{BV(\Omega)}

\newcommand*{\Conv}[1][u]{\mathcal{A}(#1)}

\newcommand{\mean}[1]{\,-\hskip-1.08em\int_{#1}} %media integrale displayed
\newcommand{\Leb}[1]{\mathcal{L}^{#1}} % Misura di Lebesgue

\def\R{\mathbb{R}}

\def\N{\mathbb{N}}

\def\u{{u}}

\def\uh{\widehat{u}}

\def\rel #1{{\overline #1}}
\DeclareMathOperator{\diver}{div}
\DeclareMathOperator{\Div}{div}
\DeclareMathOperator{\dist}{dist}
\DeclareMathOperator{\spt}{supp}

\newcommand{\defeq} {:=}
\newcommand{\medint}{-\kern  -,375cm\int}
\newcommand{\medintinrigo}{-\kern  -,315cm\int}

\newcommand{\eps}{\varepsilon}
 \newcommand{\hh}{{\mathcal H}^{N-1}}

\newcommand{\LLN}{{\mathcal L}^N}
\newcommand{\LLU}{{\mathcal L}^1}

\newcommand{\M}[1]{\mathcal{#1}}    %al posto di \mathcal argomento trra graffe!!!%
\renewcommand{\H}{\M{H}}
\newcommand{\Haus}[1][N-1]{{\mathcal H}^{#1}} % Misura di Hausdorff

\newcommand{\res}{\mathop{\hbox{\vrule height 7pt width .5pt depth 0pt
\vrule height .5pt width 6pt depth 0pt}}\nolimits} % macro per la restrizione
\def\pscal#1#2{\left\langle #1\,,\, #2 \right\rangle}

\def\ut{\widetilde{u}}
\newcommand{\polar}[1][\lambda]{\theta_{#1}}

\newcommand{\Prec}[2][\lambda]{#2^{#1}}

\newcommand{\pair}[2][\lambda]{\left(#2\right)_{{#1}}}
\newcommand{\Pair}[2]{\left[#2\right]_{{#1}}}
\newcommand{\Lpair}[1]{\left(#1\right)_{L}}

\newcommand{\Vpair}[1]{\left(#1\right)_{V}}
\newcommand{\spair}[1]{\left[#1\right]_{{*}}}
\def\upiu{u^+}
\def\umeno{u^-}
\def\uint{{u^{i}}}
\def\uext{{u^{e}}}

\def\vint{{v^{i}}}
\def\vext{{v^{e}}}

\def\Vu{u^V}

\def\laM{\mathcal{L}_M}
\def\laV{\mathcal{L}_V}

\DeclareFontFamily{U}{mathx}{}
\DeclareFontShape{U}{mathx}{m}{n}{<-> mathx10}{}
\DeclareSymbolFont{mathx}{U}{mathx}{m}{n}
\DeclareMathAccent{\widehat}{0}{mathx}{"70}
\DeclareMathAccent{\widecheck}{0}{mathx}{"71}

\def\radon{\mathcal{M}(\Omega)}
\def\radonp{\mathcal{M}_{+}(\Omega)}
\def\radonh{\mathcal{M}_{\mathcal{H}}(\Omega)}

\long\def\taglio#1{}

\newtheorem{definition}{Definition}[section]

\newtheorem{lemma}[definition]{Lemma}

\newtheorem{theorem}[definition]{Theorem}

\newtheorem{proposition}[definition]{Proposition}

\newtheorem{corollary}[definition]{Corollary}
\theoremstyle{remark}
\newtheorem{remark}[definition]{Remark}
\newtheorem{example}[definition]{Example}

\makeatletter
\def\@settitle{\begin{center}%
		\baselineskip14\p@\relax
		\bfseries
		\uppercasenonmath\@title
		\@title
		\ifx\@subtitle\@empty\else
		\\[5ex]%\@subtitle
		\normalsize\mdseries\@subtitle
		\fi
	\end{center}%
}
\def\subtitle#1{\gdef\@subtitle{#1}}
\def\@subtitle{}
\makeatother

\begin{document}
\title[Divergence of the composition of irregular fields with BV functions]
{On the divergence of the composition \\ of irregular fields with BV functions}

\author[G.~Crasta]{Graziano Crasta}
\address{Dipartimento di Matematica ``G.\ Castelnuovo'', 
Sapienza Universit\`a di Roma\\
	P.le A.\ Moro 5 -- I-00185 Roma (Italy)}
\email{graziano.crasta@uniroma1.it}
\author[V.~De Cicco]{Virginia De Cicco}
\address{Dipartimento di Scienze di Base  e Applicate per l'Ingegneria,
 Sapienza Universit\`a di Roma\\
	Via A.\ Scarpa 10 -- I-00185 Roma (Italy)}
\email{virginia.decicco@uniroma1.it}
\author[A.~Malusa]{Annalisa Malusa}
\address{Dipartimento di Matematica ``G.\ Castelnuovo'', Sapienza Universit\`a di Roma\\
	P.le A.\ Moro 5 -- I-00185 Roma (Italy)}
\email{annalisa.malusa@uniroma1.it}

\keywords{Divergence--measure vector fields, functions of bounded variation, coarea formula, Gauss--Green formula, semicontinuity}
\subjclass[2010]{26B30,49Q15,49J45}

\date{April 15, 2026}

\begin{abstract}
We introduce a family of (nonlinear) pairing measures that ensure the validity of the divergence rule for composite functions $\boldsymbol{B}(x,u(x))$, where $\boldsymbol{B}(\cdot,t)$ is a bounded divergence-measure vector field, and  $u$ is a scalar function of bounded variation.
The elements of the family depend on the choice of the pointwise representative of $u$ on its jump set. 
Beyond the standard properties, such as the Coarea and Gauss-Green formulas on sets of finite perimeter, this flexibility allows us to characterize the pairings that ensure the lower semicontinuity of the corresponding functionals
along sequences converging in $L^1$ with controlled precise values. We show that these lower semicontinuous pairings arise as the relaxation of integral functionals defined in Sobolev spaces.
\end{abstract}

\maketitle

\section{Introduction}

This paper establishes a chain rule formula for the distributional divergence of the composition
$\vv(x)=\B(x,u(x))$ defined in an open subset $\Omega\in\R^N$, extending the classical one 
\begin{equation}\label{classical}
	\Div (\B(x,u))=(\Div_x \B)(x,u)+\partial_t\B(x,u)\cdot \nabla u\,,\ \ x\in\Omega
\end{equation}
that holds in the pointwise sense if $\B(x,t)\in C^1(\Omega \times \R;\R^N)$ and $u\in C^1(\Omega)$.

In the present setting, the scalar function $u$ is assumed to be bounded and of bounded variation in an open set $\Omega \subseteq \R^N$, and 
 $\B \colon\Omega\times\R\to \R^N$ is a family of bounded divergence measure vector fields.  
Formulas of the type \eqref{classical} are unifying,
including the following two special cases interesting in themselves: the chain rule formulas, if $\B(x,t)=\B(t)$ is a Lipschitz function, and 
the Leibniz formulas, if $\B(x,t)=G(t)\A(x)$ for $G\in C^1(\R)$ and $\A\in C^1(\Omega;\R^N)$.

\smallskip

\noindent
\emph{The purpose of our study.}
The main challenge in this framework is identifying a natural substitute for the scalar product 
$\partial_t\B(x,u)\cdot \nabla u$ featured in \eqref{classical}, usually called \textsl{pairing} between $\partial_t\B(x,u)$ and $Du$, enabling the extension of the chain rule identity in the sense of measures. 

We introduce a novel, broader family of pairing measures that depend on the choice of the pointwise representative on the jump set. These pairings not only ensure the validity of the chain rule and preserve all the classical properties of known pairings (such as the coarea and Gauss--Green formulas), but they also provide a framework to study variational problems related to the semicontinuity and relaxation of integral functionals in the $BV$ setting.

The choice of this substitute is also closely
connected with the validity of a Gauss--Green formula of the type
\begin{equation}\label{GAGR}
\int_\Omega (\Div_x \B)(x,u)\,dx+\int_\Omega \partial_t\B(x,u)\cdot \nabla u\,dx
= -\int_{\partial\Omega}\B(x,u)\cdot \nu_{\partial\Omega}\,d\H^{N-1}\,,
\end{equation}
valid for $\B$, $u$ as before, and $\Omega$ Lipschitz domain, with inward normal $\nu_{\partial\Omega}$.

Much effort has been made in recent years in order to extend and generalize \eqref{classical},
and so \eqref{GAGR}, in irregular frameworks
(see \cites{Anz,ADM,ChenFrid,CD2,CD3,CDCS,CCDM,CD4}) where a suitable notion of weak normal traces 
of a vector field on countably $\mathcal{H}^{N-1}$-rectifiable sets is available (see \cites{Anz2,AmbCriMan,ComiPayne}).

The strategy to extend \eqref{classical}  is to reinterpret this formula as an equality in the sense of measures, instead of in a pointwise sense, and to introduce a  {\it{pairing measure}} $(\bsmall(x,u),Du)$
between the field $\bsmall(x,u(x))$ (where $\bsmall(x,t):=\partial_t\B (x,t))$) and the measure derivative $Du$ of a  $BV$ bounded function $u$, 
defined on smooth functions as 
$
(\bsmall(x,u),Du):=\Div (\B(x,u))-(\Div_x \B)(x,u).
$
The assumptions on $\bsmall$ ensuring that $(\bsmall(x,u),Du)$ is well-defined as a Radon measure in $\Omega$ are stated in Section \ref{s:ipo}, and include the requirement that $\bsmall(\cdot,t)$, $t\in\R$, is a family of bounded vector fields whose distributional divergence is a finite Radon measure (briefly $\bsmall(\cdot,t)\in \DM(\Omega)$ for every $t\in\R$), and such that the least upper bound of the total variations $|\Div_x \bsmall(\cdot,t)| $, denoted by $\sigma$, is also a Radon measure. Clearly, the same property is shared by the family $\Div_x\B(\cdot,t)$, since $\B(\cdot,t) =\int_0^t\bsmall(\cdot,s)ds$ implies that $\Div_x\B(\cdot,t)=\int_0^t\Div_x\bsmall(\cdot,s)ds$.
Under these weak assumptions on the vector field, but requiring that $u$ is still a regular function, the natural definition in the sense of distributions for $(\bsmall(x,u),Du)$ is
\begin{equation*}
	\langle(\bsmall(x,u),Du),\varphi\rangle:=
-\int_\Omega\B(x,u)\cdot\nabla\varphi\,dx-\int_\Omega F(x,u)\,\varphi\, d\sigma,
\qquad \varphi\in C^\infty_c(\Omega),
\end{equation*}
where $F(x,t)$ is the Radon-Nikodym derivative of the measure $(\Div_x \B)(x,t)$ with respect to $\sigma$.
In \cite{CD4} it is proved that this distribution is, in fact, a finite Radon measure in $\Omega$. 
Nevertheless, if $u$ is a $BV$ function, the second term of the right hand side 
is meaningless, until a pointwise value of $u$, defined $\sigma$-a.e., is given.

\smallskip

\noindent
\emph{The literature on the pairing.}
In the linear case $\B(x,t)=t\A(x)$ it is well established (see \cites{Anz, ChenFrid})  that the extension to $BV$ functions can be done whenever $\A\in \DM(\Omega)$ by choosing on $J_u$ the precise representative $u^*$ defined $\H^{N-1}$-a.e.\ and hence $|\Div\A|$-a.e.\ (see Proposition \ref{p:basicVF}), i.e.
$
(\A,Du):=\Div(u\A)-{u^*}\Div \A.
$
This is usually called {\it{standard (linear) pairing}}. All the main properties and features 
as coarea, Leibniz and Gauss--Green formulas for this pairing are proved in \cite{CD3}.

A nonlinear pairing was proposed in \cite{CD4}, where, under the assumptions on $\bsmall$ stated in Section \ref{s:ipo}, it is shown that the divergence rule extends to $BV$ functions
and $\DM(\Omega)$ vector fields, setting, for every $\varphi\in C^1_c(\Omega)$,
\begin{equation}\label{generalexternal1}
	\langle(\bsmall(x,u),Du),\varphi\rangle:=-\int_\Omega\B(x,u)\cdot\nabla\varphi\,dx
-\frac12\int_\Omega [F(x,u^+)+F(x,u^-)]\,\varphi\, d\sigma.
\end{equation}
Hence, some very general Gauss--Green formulas are proved, which extend to the nonlinear pairing the analogous result proved in \cite{CD3}, by admitting domains of finite perimeter. A fundamental preliminary result is to prove the representation of the weak normal traces of the composite function $v(x)=\B(x, u(x))$ (see Proposition \ref{p:traces} below).

The standard pairings are very useful technical tools in many situations, 
for instance for optimality conditions in plasticity,  which is the original motivation behind their introduction (see, e.g., \cites{Anz3,AnzGia,KoTe}), for the study of 
semicontinuity properties of integral functionals
(see \cites{BouDM,dcl,DCFV1,DCFV2}), for the study of transport equation and of 
conservation laws with discontinuous flux of the form
$u_t + \Div \B(x, u) = 0$
(see \cites{ACDD,AmbCriMan,ADM,CD2,CDD,ChFr1,ChTo2,ChTo,ChToZi,ChenFrid}), 
in the setting of weak formulations of PDEs such as Euler--Lagrange equations associated 
with integral functionals defined in $BV$
\cites{ABCM,MaRoSe,Mazon2016}, 
Dirichlet problems for equations involving the $1$--Laplace operator 
\cites{AVCM,Cas}, 
the Prescribed Mean Curvature problem and capillarity
\cites{LeoSar,LeoSar2}, and
continuum mechanics
\cites{ChCoTo,DGMM,Silh,Schu}.

In contrast, the standard pairing 
proves inadequate when dealing with semicontinuity properties. 
This becomes apparent in \cite{CDM}, where
we characterize the Borel selections $\lambda:\Omega\to [0,1]$  making the functionals
corresponding to \textit{(linear) $\lambda$--pairings}
 \(
\pair{\A,Du}=\Div(u\A)-\Prec{u}\Div \A,
\)
semicontinuous with respect to the strict convergence in $BV$. Here $u^\lambda:=(1-\lambda)u^-+\lambda u^+$ is a Borel
choice of the representative of $u$ on the jump set, between the approximate lower and upper limits $u^-$ and $u^+$.
We show that, in general,
the standard pairing, corresponding to $\lambda=1/2$, does not share this property.
In \cite{CCDM} (see also \cite{CD3} and \cite{CDM}) 
a detailed description of the density of the pairing measure $\pair{\A, Du}$ with respect to $|Du|$ is obtained.
 
Furthermore, in \cite{CD5} the authors prove that, while the original definition is given in a  distributional sense, the pairing measure introduced in \cite{CD4} can be regarded in a variational sense as a relaxed functional with respect to the weak$^*$ convergence in $BV$,  under the crucial assumption that the measure $\sigma$ has an $L^N$ density with respect to the Lebesgue measure. More precisely, 
for every  $\varphi \in C^\infty_c(\Omega)$,
the functional
\begin{equation}
\label{t:generalenuovo1}
\rel{F_\varphi}(u)=\int_\Omega\varphi\ d(\bsmall(x,u),Du) \,, \qquad u\in BV(\Omega)\cap L^\infty(\Omega)
\end{equation}
is the relaxation of
\begin{equation*}
F_\varphi(u):=
\begin{cases}
\displaystyle\int_\Omega \varphi\  \bsmall(x,u)\cdot\nabla u\,dx
& \text{if $u\in W^{1,1}(\Omega)$}\cap L^\infty(\Omega),
\\
+\infty
,
& \text{if}\ u\in (BV(\Omega)\
\setminus W^{1,1}(\Omega))\cap L^\infty(\Omega)\,,
\end{cases}
\end{equation*}
with respect to weak$^*$ convergence in $BV$.

\smallskip

\noindent
\emph{Explanation of our results.}
In the present paper we extend these results to general bounded fields with measure divergence. To this aim, we introduce the family of  {\it{nonlinear $\lambda$-pairings}} which includes all those studied in the literature so far.  We show that  a further very general Gauss--Green formula, extending all the previous ones, holds, so that these pairings can be used for the weak formulation of PDEs. In addition, the nonlinear $\lambda$-pairings allow us to treat the lower semicontinuity and relaxation for integral functionals defined in $BV(\Omega)$, in the spirit of \cites{CDM,CD5}.
 
 The most obvious way to generalize \eqref{generalexternal1} (the standard nonlinear pairing $(\bsmall(x,u),Du)_*$) is to associate with every
Borel function $\lambda\colon\Omega\to [0,1]$, 
an {\it {external nonlinear $\lambda$--pairing}} 
	\begin{equation}
		\label{f:pairlambdaext1}
		\Pair{\lambda}{\bsmall(\cdot,u), Du}= -\left((1-\lambda)F(x, u^-)+\lambda F(x, u^+)\right) \, \sigma + \Div(\B(x,u))\,.
	\end{equation}
Nevertheless, we  show that the external nonlinear pairing is not the right notion for the semicontinuity issue.
 In order to identify a good selection that guarantees lower semicontinuity properties, we need to enlarge the class of these pairings. 
Indeed, we define the
{\it{nonlinear $\lambda$--pairings}}
as the measure 
\begin{equation}
	\label{f:pairlambda1}
	\pair{\bsmall(\cdot,u), Du}= -F(x, \Prec{u})\, \sigma + \Div(\B(x,u))\,.
\end{equation} 
We prove that for every Borel 
selection $\Lambda\colon \Omega \to [0,1]$ there exists a %$\sigma$-measurable 
Borel selection $\lambda\colon \Omega \to [0,1]$ such that 
\(
\Pair{\Lambda}{\bsmall(\cdot,u), Du}=\pair{\bsmall(\cdot,u), Du}
\)
(see Proposition~\ref{legame}).

On the other hand, the nonlinear pairings differ from the external ones only on the jump set of $u$, and hence they share many of the same properties. In many cases these pairings coincide.
If, for instance, $t\mapsto F(x,t)$ is a nondecreasing function on $\R$, 
then there is a canonical identification between all external nonlinear pairings  and nonlinear pairings
 (see Proposition \ref{legame} and Remark \ref{r:leg}).
In particular, this is the case if $\bsmall(x,t)$ can be split as $\bsmall(x,t)=g(t)\A(x)$, $g\in C_b(\R)$, $g\geq 0$, and $\A\in\DM$
(see Example \ref{ex:separated}). The linear case $\bsmall(x,t)=\A(x)$ is recovered for $g(t)=1$, and
$
(\A, Du)_\lambda=[\A, Du]_\lambda
$ in $\Omega$, with the same $\lambda$.
Both pairings  admit similar structure theorems. 
We prove that the pairings of both families \eqref{f:pairlambdaext1} and \eqref{f:pairlambda1} are absolutely continuous with respect to the measure $|Du|$ (see Theorem \ref{chainb4}). Their corresponding densities %$\theta^\lambda$ and $\theta_\lambda$ 
with respect to $|Du|$ can be explicitly written in $J_u$ in terms of the weak normal traces of $\bsmall$ on $J_u$
(see Proposition \ref{p:tracesonJ}). 
On the other hand, in $\Omega\setminus J_u$ all the pairing measures coincide and the density of their diffuse parts 
(which is independent of $\lambda$) can be described by using the weak normal traces of $\bsmall$ on the reduced boundary of the level set $\{u>\widetilde u(x)\}$ (Theorem~\ref{t:diff}). 
This representation of the diffuse part is based on the use of the coarea formula (see Theorem \ref{t:coarea}) valid for the pairings \eqref{f:pairlambdaext1}. 
Finally, in Theorem \ref{t:GG}, Gauss--Green formulas is obtained, for both the families  \eqref{f:pairlambdaext1} and \eqref{f:pairlambda1}, by generalizing together Theorem 6.1 in \cite{CD4} and Theorem 6.3 in \cite{CDM}. 

 In the last part of the paper, we address lower semicontinuity and relaxation results. 
As in \cite{CDM}, we exhibit pairings $(\bsmall(x,u), Du)_L$ lower semicontinuous with respect to a suitable notion of convergence
(see Definition~\ref{d:conv}), 
weaker than strict convergence in $BV$.
Finally, a relaxation formula \eqref{t:generalenuovo1} for integral functionals defined in $W^{1,1}(\Omega)$ can be established by using approximation sequences considered in \cite[Theorem 3.1]{ComiLeo} (see Theorem \ref{t:limsupG}).

\smallskip

\noindent
\emph{Outline of the paper.}
The paper is organized as follows. Section \ref{s:prelim} and Section \ref{s:ipo} contain the necessary notation, the standing assumptions on the  vector fields, and some preliminary results. In Section \ref{s:pair}, we define the families of nonlinear pairings and establish their representations outside the Cantor part of $Du$. Building on this, Sections \ref{s:coarea} and \ref{s:intrepr} present a generalized Coarea formula and the representation of the Cantor part of the pairings, respectively. These tools are applied in Section 7 to derive generalized Gauss-Green formulas. Finally, the variational applications are discussed in the last two sections: Section \ref{s:sc} addresses the lower semicontinuity of the pairings, and Section \ref{s:relax} proves the related relaxation formula.

\section{Notation and preliminary results}
\label{s:prelim}

As a rule, we follow the notation established in \cite{AFP},  recalling in this section only those notions most relevant to our purposes.

In the sequel, \(\Omega\) denotes a nonempty open subset of 
\(\R^N\), $N\geq 2$. 
We write $C^k_c(\Omega, \R^m)$ for the space of
$C^k$ functions  $\varphi\colon\Omega\to\R^m$ with compact support in $\Omega$,
and $C_b(\Omega,\R^m)$ for the space of continuous and bounded functions $\varphi\colon\Omega\to\R^m$.
For $m=1$ we write $C^k_c(\Omega)$ and $C_b(\Omega)$, respectively.
We denote by $\LLN$  the Lebesgue measure in $\R^N$
and by $\hh$ the $(N-1)$--dimensional 
Hausdorff measure.

Given a \(\LLN\)-measurable set \(E\subset\R^N\),
we denote by $E_t$ the set of points of density $t\in[0,1]$, namely
\[
E^t := \left\{x\in\R^N:\
\lim_{\rho\to 0^+} \frac{\LLN(E\cap B_\rho(x))}{\LLN(B_\rho(x))} = t\right\}\,.
\]
The sets \(E^0\), \(E^1\), \(\partial^e E := \R^N\setminus (E^0 \cup E^1)\) are called, 
respectively, the \textsl{measure-theoretic exterior}, 
the \textsl{measure-theoretic interior} and
the \textsl{essential boundary} of \(E\).

\smallskip
Let $u\colon \Omega\to\R$ be a Borel function.
We denote by $\umeno$ and $\upiu$ 
the \textsl{approximate lower limit} and the 
\textsl{approximate upper limit} of $u$,
defined respectively by
\begin{gather*}
u^+(x) := \inf\{t\in\R:\ \{u>t\}\ \text{has density $0$ at $x$}\},
\\
u^-(x) := \sup\{t\in\R:\ \{u>t\}\ \text{has density $1$ at $x$}\}.
\end{gather*}
The function $u$ is \textsl{approximately continuous} at $x\in\Omega$
if $\upiu(x) = \umeno(x)$; in this case, we denote their common value
by $\ut(x)$.

For \(u\in L^1_{{\rm loc}}(\Omega)\),
a point $x\in\Omega$ is a \textsl{Lebesgue point} 
of $u$ (with respect to $\LLN$)
if there exists \(z\in\R\) such that
\[
\lim_{r\rightarrow0^{+}}\frac{1}{\LLN\left(  B_r(x)\right)}\int_{B_r\left(  
x\right)
}\left|  u(y)  -z  \right|  \,dy=0.
\]
In this case, $x$ is a point of approximate
continuity, and $z = \ut(x)$
(see \cite[Proposition~1.163]{FonLeoBook}). 
We denote by \(S_u\subset\Omega\) the set of points where
approximate continuity fails.

We say that \(x\in\Omega\) is an {\sl approximate jump point} of \(u\) if
there exist \(a,b\in\R\) and a unit vector \(\nu\in\R^N\) such that \(a\neq b\)
and
\begin{equation}\label{f:disc}
\begin{gathered}
\lim_{r \to 0^+} \frac{1}{\LLN(B_r^i(x,\nu))}
\int_{B_r^i(x,\nu)} |u(y) - a|\, dy =
\lim_{r \to 0^+} \frac{1}{\LLN(B_r^e(x,\nu))}
\int_{B_r^e(x,\nu)} |u(y) - b|\, dy = 0,
\end{gathered}
\end{equation}
where 
\begin{equation}
\label{f:halfballs}
\begin{split}
B_r^i(x,\nu)  & := \{y\in B_r(x):\ (y-x)\cdot \nu_u(x) > 0\}, 
\\
B_r^e(x,\nu)  & := \{y\in B_r(x):\ (y-x)\cdot \nu_u(x) < 0\}.
\end{split}
\end{equation}
The triplet \((a,b,\nu)\), uniquely determined by \eqref{f:disc} 
up to a permutation
of \((a,b)\) and a simultaneous change of sign of \(\nu\),
is denoted by \((\uint(x), \uext(x), \nu_u(x))\).
We denote by \(J_u\) the set of approximate jump points of \(u\).

\subsection{Measures}

We denote by $\radon$ (resp. $\radonp$) the spaces of all Radon measures (resp.\ nonnegative Radon measures) on $\Omega$. For $\mu\in\radon$ and $\varphi\in C_c(\Omega)$ we use the notation
\[
\langle \mu,\varphi \rangle :=\int_\Omega \varphi\, d \mu.
\]

Given $\mu\in\radon$  we denote by $\mu\res E$ the \text{restriction} 
of $\mu$  to a $\mu$-measurable set $E$, and by 
$|\mu|$, $\mu^+$, $\mu^-$  its {\sl total variation}, and its {\sl positive} and {\sl negative parts}, respectively.

It is well known that $|\mu|, \mu^+,\mu^-$ belong to $\radonp$, and by Hahn's decomposition theorem, we have that  there exist two mutually disjoint Borel sets $\Omega^+_\mu$, $\Omega^-_\mu$ such that
$\Omega=\Omega^+_\mu\cup \Omega^-_\mu$, 
$\mu^+= \mu \res \Omega^+_\mu$, 
$\mu^-= -\mu \res \Omega^-_\mu$.

Given a nonnegative Borel %Radon 
measure $\nu\in\radonp$, we say that $\mu\in\radon$ is 
{\sl absolutely continuous} with respect to $\nu$ (and we write
$\mu \ll \nu$), if $|\mu|(B)=0$ for every set $B$ such that $\nu(B)=0$. We frequently deal with measures not charging  $\Haus$--null sets.
\begin{definition}\label{d:mh}
A Borel measure $\mu$ belongs to $\radonh$ if $|\mu|(B)=0$ for every Borel set $B\subseteq\Omega$ such that $\Haus(B)=0$.
\end{definition}

We say that two  measures $\nu_1$, $\nu_2\in\radonp$ are {\sl mutually 
singular}
(and we write $\nu_1 \perp \nu_2$) 
if there exists a Borel set $E$ such that $|\nu_1|(E)=0$ and 
$|\nu_2|(\Omega\setminus E) = 0$.
By the Radon--Nikod\'ym theorem, given $\nu\in\radonp$, every 
$\mu\in\radon$ can be uniquely decomposed as $\mu=\mu_1+\mu_2$ with
$\mu_1 \ll \nu$ and $\mu_2 \perp \nu$, and there exists a unique Borel function
$\psi_\nu\in L^1(\Omega; \nu)$,
called the density of $\mu$ with respect to $\nu$,
such that $\mu_1=\psi_\nu \nu$.
In particular, since $\mu \ll |\mu|$, then there exists
$\psi\in L^1(\Omega; |\mu|)$, with $|\psi|=1$ $|\mu|$--a.e. in $\Omega$, and such 
that  $\mu = \psi |\mu|$. This is usually called the {\sl polar decomposition}
of $\mu$. 

\smallskip
Given $\mu\in\radon$, we denote by $\mu = \mu^a + \mu^s$ its
Lebesgue decomposition in the absolutely continuous part
$\mu^a \ll \LLN$ and the singular part $\mu^s\perp\LLN$. 

\subsection{Functions of bounded variation}
\label{ss:BV}

We say that \(u\in L^1(\Omega)\) is a \textsl{function of bounded variation} in 
\(\Omega\)
if its distributional gradient \(Du\) is a finite Radon vector valued measure in \(\Omega\).
The vector space of all functions of bounded variation in \(\Omega\)
will be denoted by \(BV(\Omega)\).

If \(u\in BV(\Omega)\), then \(Du\) can be decomposed as
the sum of the absolutely continuous and the singular part with respect
to the Lebesgue measure, i.e.\
$Du = D^a u + D^s u$, and
$D^a u = \nabla u \, \LLN$,
where \(\nabla u\) is the \textsl{approximate gradient} of \(u\),
defined \(\LLN\)-a.e.\ in \(\Omega\)
(see \cite[Section~3.9]{AFP}).
The jump set $J_u$ is countably $\H^{N-1}$--rectifiable,
and  $\H^{N-1}(S_u  \setminus J_u) = 0$
(see \cite[Definition~2.57 and Theorem~3.78]{AFP}).
Moreover, the singular part \(D^s u\) can be further decomposed
as the sum of its Cantor and jump part, i.e.
$D^s u = D^c u + D^j u$,
with $D^c u := D^s u \res (\Omega\setminus S_u)$,
and
\[
D^j u := D^s u \res J_u = (\Prec[i]{u} - \Prec[e]{u}) \, \nu_u\, \Haus\res J_u.
\]
We denote by \(D^d u := D^a u + D^c u\) the diffuse part of the measure 
\(Du\).

At every point $x\in J_u$ we have
$-\infty < \umeno(x) < \upiu(x) < +\infty$ and
\[
\umeno(x) =\min\{\uint(x),\uext(x)\}, \qquad 
\upiu(x) =\max\{\uint(x),\uext(x)\},
\qquad x\in J_u.
\]
In few occasions, to simplify the notation, 
we will choose the orientation on $J_u$ so that $\uint = \upiu$ on $J_u$
(see \cite[\S 4.1.4, Theorem~2]{GMS1}).
In this case, $\nu_u$ coincides $\Haus$-a.e.\ on $J_u$ with the density of the polar decomposition
of $Du$, that, with some abuse of notation, we denote with $\nu_u$, i.e., $Du = \nu_u |Du|$.
In what follows, we extend the functions $\umeno,\upiu$ to
\(\Omega\setminus(S_u\setminus J_u)\) by setting
\[
\umeno=\upiu=\widetilde{u}\quad \text{in}\ \Omega\setminus S_u.
\]

\begin{definition}
Given a Borel function $\lambda \colon \Omega \to [0,1]$ (briefly $\lambda\in \spacelambda$), 
the \textsl{ $\lambda$--representative} of $u\in BV(\Omega)$ is defined by
\begin{equation}\label{f:pr}
\Prec{u}(x):=
\begin{cases}
\widetilde{u}(x), & x\in \Omega \setminus S_u, \\
(1-\lambda(x))\umeno(x)+\lambda(x)\upiu(x), & x\in J_u.
\end{cases}
\end{equation} 
\end{definition}

\begin{remark}
When $\lambda(x) = 1/2$ for every $x\in\Omega$,
the $\lambda$--representative coincides with the {\sl precise representative} $u^* := (\upiu + \umeno)/2$ of $u$.
Since $\Haus(S_u\setminus J_u)=0$ for $u\in BV(\Omega)$, 
the function $\Prec{u}$ is well defined $\Haus$--a.e.\ in $\Omega$.
\end{remark}

The following lemma relates the pointwise behavior  of  \(\chi_{\{u>t\}}\) to that of \(u\),  see \cite[Lemma 2.2]{DCFV2} for the proof.
\begin{lemma} \label{funzcaracter}
	Let $u\in BV(\Omega)$.
	Then, for $\mathcal{L}^1$-a.e. $t\in\R$ the function $\chi_{\{u>t\}}$ belongs to $BV(\Omega)$.
	Moreover, there
	exists a Borel set $N_t\subset\Omega$, with $\H^{N-1}(N_t)=0$, such that 
	the following relation holds:
	\[
	\chi_{\{u^\pm>t\}}(x) =\chi_{\{u>t\}}^{\pm}(x),
	\qquad
	\forall x\in\Omega\setminus N_t.
	\]
\end{lemma}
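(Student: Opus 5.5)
The plan splits into two parts, one for each claim of Lemma \ref{funzcaracter}. The first claim is the coarea formula for $BV$ functions (see \cite[Theorem~3.40]{AFP}). If $u\in BV(\Omega)$, then
\[
|Du|(\Omega)=\int_{-\infty}^{+\infty}|D\chi_{\{u>t\}}|(\Omega)\,dt<+\infty ,
\]
so the perimeter of $\{u>t\}$ in $\Omega$ is finite for $\mathcal L^1$-a.e.\ $t$. Since $\chi_{\{u>t\}}$ is bounded, it lies in $L^1_{\rm loc}$. Therefore $\chi_{\{u>t\}}\in BV_{\rm loc}(\Omega)$ with finite total variation, which is what the statement means. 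If $\Omega$ has infinite measure, one only gets $BV_{\rm loc}$ with finite variation, and I would read the claim that way.

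For the pointwise identity, fix $x$ and $t$. The key observation comes straight from the definitions of $u^\pm$, which are the same density statements written for the level sets. The monotone family $s\mapsto\{u>s\}$ gives the following implications:
\begin{itemize}
\item if $u^+(x)>t$, then $\{u>t\}$ does not have density $0$ at $x$;
\item if $u^+(x)<t$, then $\{u>t\}$ has density $0$ at $x$;
\item analogous statements hold for $u^-$ with density $1$.
\end{itemize}

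The approximate limits of the characteristic function $v=\chi_{\{u>t\}}$ take values in $\{0,1\}$. The value $v^+(x)=0$ holds exactly when $\{u>t\}$ has density $0$ at $x$, and $v^-(x)=1$ holds exactly when $\{u>t\}$ has density $1$. So $\chi_{\{u^\pm>t\}}(x)=v^\pm(x)$ holds automatically whenever $u^\pm(x)\neq t$.

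The remaining work is the exceptional set. For fixed $t$, take $N_t=\{x:\ u^+(x)=t\ \text{or}\ u^-(x)=t\}$, enlarged by $S_u\setminus J_u$. The latter is $\H^{N-1}$-null by \cite[Theorem~3.78]{AFP}.

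This is where I expect the main obstacle. The set $\{u^\pm=t\}$ is $\H^{N-1}$-null only for all but countably many $t$, or at least $\mathcal L^1$-a.e.\ $t$. The argument goes as follows:
\begin{itemize}
\item the sets $\{\tilde u=t\}\setminus S_u$, for different $t$, are disjoint;
\item $|Du|$ vanishes on $\H^{N-1}$-null sets;
\item the coarea formula shows that $\H^{N-1}$-a.e.\ point of $\{\tilde u=t\}$ lies in $\partial^*\{u>t\}$;
\item an argument of the type $\int \H^{N-1}(\{\tilde u =t\}\cap \partial^*\{u>t\}\cap K)\,dt\le |Du|(K)$, together with disjointness, shows that $\H^{N-1}(\{\tilde u=t\}\setminus\partial^*\{u>t\})=0$ for a.e.\ $t$.
\end{itemize}
On the jump part, $J_u\cap\{u^\pm=t\}$ is harmless: each point of $J_u$ is counted for at most two values of $t$, so the sets are again disjoint in $t$ and $\sigma$-finite. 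In any case the lemma is stated for $\mathcal L^1$-a.e.\ $t$.

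The delicate points where $\tilde u(x)=t$ need one more argument. There I would use that at $\H^{N-1}$-a.e.\ such $x$, $x\in\partial^*\{u>t\}$, so $v$ has a jump at $x$ with $v^-=0$ and $v^+=1$. One then checks consistency with $\chi_{\{u^\pm>t\}}(x)$, which is $0$ since $u^\pm(x)=t$. This consistency check fails for $v^+$. So I will instead exploit that, for a.e.\ $t$, the set $\{\tilde u=t\}$ is $\H^{N-1}$-negligible outside a $|Du|$-null portion. Concretely, I would argue via the disjointness/countability trick that $\H^{N-1}(\{u^+=t\}\cup\{u^-=t\})=0$ for $\mathcal L^1$-a.e.\ $t$, at least on sets of $\sigma$-finite $\H^{N-1}$ measure. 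This gives the required $N_t$, and the proof is otherwise a direct unwinding of definitions. As the paper notes, a complete argument is in \cite[Lemma 2.2]{DCFV2}.
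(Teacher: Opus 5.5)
Your coarea step and your unwinding of the definitions are correct: $\chi^{\pm}_{\{u>t\}}(x)=\chi_{\{u^\pm>t\}}(x)$ whenever $u^\pm(x)\neq t$. On $J_u$ the countability argument also disposes of $\{u^\pm=t\}$ for all but countably many $t$.

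The argument breaks at the last step, where you claim that $\H^{N-1}(\{u^+=t\}\cup\{u^-=t\})=0$ for $\LLU$-a.e.\ $t$. This is false. For $u(x)=x_1$ on $\Omega=B_1(0)$ one has $u^\pm=u$, so $\{u^\pm=t\}=\{x_1=t\}\cap B_1$ has positive $\H^{N-1}$-measure for every $t\in(-1,1)$. In general, write $E_t:=\{u>t\}$. The definitions give $\partial^*E_t\setminus S_u\subseteq\{\widetilde u=t\}$, and the coarea formula gives $\int_{\R}\H^{N-1}(\partial^*E_t\setminus S_u)\,dt=|D^du|(\Omega)$. So these level sets are non-negligible for a set of $t$ of positive measure as soon as $D^du\neq 0$. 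The disjointness trick works only inside a fixed set that is $\sigma$-finite for $\H^{N-1}$, such as $J_u$, and the sets $\{\widetilde u=t\}$ do not lie in a common such set. Your bullet list actually points the other way: it says that $\{\widetilde u=t\}$ coincides with $\partial^*E_t$ up to $\H^{N-1}$-null sets, so it is typically not null.

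More importantly, your consistency check was correct, and no choice of $N_t$ can repair it. At every $x\in\partial^*E_t\setminus S_u$ the set $E_t$ has density $1/2$, hence $\chi^+_{E_t}(x)=1$. On the other hand $\chi_{\{u^+>t\}}(x)=0$, because $u^+(x)=t$. In the example this happens on all of $\{x_1=t\}\cap B_1$ for every $t\in(-1,1)$. So the ``$+$'' half of the lemma as printed is false, while the ``$-$'' half is fine. The paper gives no proof, only a reference, so there is no argument to compare with. What your ingredients do prove are two correct substitutes.

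(1) For a.e.\ $t$, the identities $\chi^-_{E_t}=\chi_{\{u^->t\}}$ and $\chi^+_{E_t}=\chi_{\{u^+\ge t\}}$ hold $\H^{N-1}$-a.e.\ in $\Omega$. Off $\{u^\pm=t\}$ there is nothing to prove, and on $J_u$ you use countability. On $\{\widetilde u=t\}\setminus S_u$ you use that, for a.e.\ $t$, $\H^{N-1}$-a.e.\ such point lies in $\partial^*E_t\subseteq E_t^{1/2}$. This is property (b) in the proof of Theorem~\ref{t:diff}; your integral inequality bounds the wrong set and does not prove it.

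(2) Let $\mu$ be any $\sigma$-finite Borel measure on $\Omega$, for instance the measure $\sigma$ of (b4) or $\LLN$. Then the identity with strict inequalities holds $\mu$-a.e.\ for a.e.\ $t$. Indeed, the Borel set $\{(x,t)\colon t\in\{u^-(x),u^+(x)\}\}$ has $x$-sections with at most two points, so it is $\mu\otimes\LLU$-null by Fubini, and outside it there is no mismatch.

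Version (2) is all the paper actually needs: in \eqref{fusco}, \eqref{f:chil} and \eqref{f:Ik1} the identity is only ever integrated against $d\sigma\,dt$ or $dx\,dt$.
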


\smallskip
A \(\LLN\)-measurable set \(E\) compactly contained in $\Omega$ is said to be a \textsl{set of finite perimeter} in $\Omega$ if
its characteristic function $\chi_E$ belongs to $\BV$.
The \textsl{reduced boundary} \(\partial^* E\) of a set of finite perimeter \(E\) is the set of all points
\(x\in \Omega\) in the support of \(|D\chi_E|\) such that the limit
\[
\nuint_E(x) := \lim_{\rho\to 0^+} 
\frac{D\chi_E(B_\rho(x))}{|D\chi_E|(B_\rho(x))}
\]
exists in \(\R^N\) and satisfies \(|\nuint_E(x)| = 1\).
The function \(\nuint_E\colon\partial^* E\to S^{N-1}\) is called
the \textsl{measure-theoretic interior unit normal} to \(E\).

A fundamental result of De Giorgi (see \cite[Theorem~3.59]{AFP}) states that
\(\partial^* E\) is countably \((N-1)\)-rectifiable
and \(|D\chi_E| = \hh\res \partial^* E\).
If \(E\) has finite perimeter in \(\Omega\), Federer's structure theorem
states that
\(\partial^* E\cap\Omega \subset E^{1/2} \subset \partial^e E\)
and \(\H^{N-1}(\Omega\setminus(E^0\cup \partial^e E \cup E^1)) = 0\)
(see \cite[Theorem~3.61]{AFP}).

\subsection{Divergence--measure fields }
\label{ss:div}

We denote by \(\DM(\Omega)\) the space of all
vector fields 
\(\A\in L^\infty(\Omega, \R^N)\)
whose distributional divergence, defined by
\[
\int_\Omega \varphi\, d\Div\A = -\int_{\Omega} \A\cdot\nabla\varphi\, dx \qquad
\forall \varphi\in C^1_c(\Omega),
\]
 is a finite Radon measure in \(\Omega\).

The basic properties of such vector fields are collected in the following proposition.

\begin{proposition}\label{p:basicVF}
Let \(\A\in\DM(\Omega)\), and define
\begin{equation}\label{f:jump0}
\jump{ \Div\A}
:= \left\{
x\in\Omega:\
\limsup_{r \to 0+}
\frac{|\Div \A| (B_r(x))}{r^{N-1}} > 0
\right\}.
\end{equation}  
Then:
\begin{itemize}
\item[(i)] \(\Div\A \in\radonh\);
\item[(ii)] $\jump{\Div\A}$ is a Borel set, \(\sigma\)-finite with respect to 
\(\hh\);
\item[(iii)]\(
\Div\A = \Div^a\A + \Div^c\A + \Div^j\A,
\)
where \(\Div^a\A\) is absolutely continuous with respect to \(\LLN\),
\(\Div^c\A (B) = 0\) for every set \(B\) with \(\hh(B) < +\infty\),
and there exists $f\in L^1(\jump{\Div\A}; \Haus\res \jump{\Div\A})$
such that
\(
\Div^j\A = f\, \hh\res\jump{\Div\A}
\).
\end{itemize}
\end{proposition}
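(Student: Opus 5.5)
The plan is to prove the three items in order, using two tools: the Gauss--Green formula for $\A$ on balls, and the fact that $\Div\A$ is a finite measure whose $(N-1)$-dimensional upper density controls how it charges sets. The basic estimate is the following. For every $x\in\Omega$ and almost every $r$ with $B_r(x)\subset\subset\Omega$,
\[
|\Div\A|(B_r(x))\ \text{is controlled by}\ \|\A\|_\infty \hh(\partial B_r(x)) = C\,\|\A\|_\infty r^{N-1}.
\]
Strictly, this bound holds only for $|\Div\A(B_r(x))|$. We obtain it by testing the definition of $\Div\A$ with radial cut-offs $\varphi_\varepsilon(y)=\min\{1,(r-|y-x|)^+/\varepsilon\}$ and letting $\varepsilon\to0$. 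The same argument applies to any Borel set approximated by finite unions of balls.

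For (i), take a Borel set $B$ with $\hh(B)=0$, and reduce to $B$ compact by inner regularity. Split $\Div\A$ into positive and negative parts via Hahn's decomposition: $\Omega^+$ and $\Omega^-$. It suffices to show $\mu^+(B)=0$, where $\mu=\Div\A$. Given $\delta>0$, cover $B\cap\Omega^+$ by finitely many balls $B_{r_j}(x_j)$ with $\sum r_j^{N-1}<\delta$. Then apply the estimate to $\varphi=\chi_{\bigcup B_{r_j}}$, smoothed, multiplied by a function approximating $\chi_{\Omega^+}$. To avoid the sign issue, I would use the standard device of \cite{ChenFrid}: $\mu(K)=\lim \int \varphi_k\,d\mu$ with $\varphi_k$ Lipschitz cutoffs of the $\eta$-neighbourhood of a compact $K\subseteq\Omega^+\cap B$. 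The Lipschitz constant of $\varphi_k$ is at most $1/\eta$ on a layer of measure about $\eta\sum r_j^{N-1}$. This gives $|\mu(K)|\le C\|\A\|_\infty\delta$, so $\mu^+(B)=0$, and similarly $\mu^-(B)=0$. This is the delicate step, and I expect it to be the main obstacle: the layer estimate needs the covering balls chosen with radii comparable to $\eta$, which is handled by using a Vitali-type cover at scale $\eta$.

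For (ii), write $\jump{\Div\A}=\bigcup_k\Theta_k$, where
\[
\Theta_k:=\{x:\ \limsup_{r\to0^+} |\Div\A|(B_r(x))/r^{N-1}>1/k\}.
\]
Borel measurability follows because $x\mapsto|\Div\A|(B_r(x))$ is lower semicontinuous, so the $\limsup$ can be taken over rational $r$ up to comparison of nearby radii. The density theorem for Radon measures \cite[Theorem~2.56]{AFP} gives $\hh(\Theta_k)\le Ck\,|\Div\A|(\Omega)<\infty$. Hence $\jump{\Div\A}$ is $\sigma$-finite with respect to $\hh$.

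For (iii), let $\Div^a\A$ be the Lebesgue absolutely continuous part. Set $\Div^j\A:=\Div^s\A\res\jump{\Div\A}$ and $\Div^c\A:=\Div^s\A\res(\Omega\setminus\jump{\Div\A})$. Since $\jump{\Div\A}$ is $\sigma$-finite for $\hh$ and $\Div\A\ll\hh$ by (i), the Radon--Nikod\'ym theorem applied on each $\Theta_k$ yields $\Div^j\A=f\,\hh\res\jump{\Div\A}$ with $f\in L^1$. The $\LLN$ part is null there because $\LLN(\jump{\Div\A})=0$. Finally, suppose $\hh(B)<\infty$ and $B\cap\jump{\Div\A}=\emptyset$. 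On such a $B$ the upper $(N-1)$-density of $|\Div\A|$ is $0$, so \cite[Theorem~2.56]{AFP} gives $|\Div\A|(B)\le \epsilon\,\hh(B)$ for every $\epsilon>0$. Thus $\Div^c\A(B)=0$.
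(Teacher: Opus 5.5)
Your arguments for (ii) and (iii) are sound. Rational radii give Borel measurability, \cite[Theorem~2.56]{AFP} gives $\hh(\Theta_k)\le Ck\,|\Div\A|(\Omega)$, and the Radon--Nikod\'ym step works on the $\LLN$-null, $\hh$-$\sigma$-finite set $\jump{\Div\A}$, with vanishing upper density off it. This is the standard argument behind \cite[Proposition~2.3]{ADM}, which the paper cites for the decomposition; for (i) it cites Chen--Frid and proves nothing itself.

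The gap is in (i), at the step you flag. For the cut-off $\varphi$ of the $\eta$-neighbourhood $K_\eta$ of $K$ you have $\int|\nabla\varphi|\,dx=\eta^{-1}\LLN(K_\eta\setminus K)$. This quantity is governed by the $(N-1)$-dimensional Minkowski content of $K$, not by $\hh(K)$. To bound it by $C\sum_j r_j^{N-1}$ you need covers of $K$ by balls of radius comparable to $\eta$ with $\sum_j r_j^{N-1}$ small. That means $\eta^{N-1}$ times the covering number of $K$ at scale $\eta$ must be small, i.e.\ vanishing lower Minkowski content, which is strictly stronger than $\hh(K)=0$. A Hausdorff-null set in general only admits covers with widely varying radii, and no Vitali-type selection at a fixed scale can repair this. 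Indeed, there are countable compact sets $K$ (so $\hh(K)=0$) with $\eta^{-1}\LLN(K_\eta)\to+\infty$ as $\eta\to0^+$. The intermediate idea of multiplying by an approximation of $\chi_{\Omega^+}$ is also unusable, since its gradient is uncontrolled. You are right to replace it by evaluating $\mu$ on compact $K\subseteq\Omega^+\cap B$.

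The fix is to adapt the cut-off to the cover, not to $K$. Take a finite cover of $K$ by balls $B_{r_j}(x_j)$ meeting $K$, with $r_j<\delta$, $\sum_j r_j^{N-1}<\varepsilon$, and $\delta$ small enough that $K_{3\delta}\Subset\Omega$. Set $\phi:=\max_j\phi_j$, where $\phi_j=1$ on $B_{r_j}(x_j)$, $\spt\phi_j\subseteq\overline{B_{2r_j}(x_j)}$ and $|\nabla\phi_j|\le 1/r_j$. Then $|\int\phi\,d\mu|=|\int\A\cdot\nabla\phi\,dx|\le\|\A\|_\infty\sum_j\int|\nabla\phi_j|\,dx\le C\|\A\|_\infty\varepsilon$. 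Since $\phi=1$ on $K$ and $\spt\phi\subseteq K_{3\delta}$, you also get $|\int\phi\,d\mu-\mu(K)|\le|\mu|(K_{3\delta}\setminus K)$, which tends to $0$ as $\delta\to0$ because $K$ is compact. Hence $\mu(K)=0$.

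Equivalently, cut off the $\eta$-neighbourhood of $U=\bigcup_j B_{r_j}(x_j)$ with $\eta\le\min_j r_j$, which is allowed because the cover is finite. This is what your opening remark about finite unions of balls actually yields: $|\mu(U)|\le C\|\A\|_\infty\sum_j r_j^{N-1}$ and $\mu(U)\to\mu(K)$ as the covers shrink, with no single scale needed.
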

\begin{proof}
The main property (i) is proved in \cite[Proposition 3.1]{ChenFrid}.
The decomposition then follows from \cite[Proposition 2.3] {ADM}.
\end{proof}

The measures $\Div^j\A$ and $ \Div^c\A $ are called, respectively,
the \textsl{jump part} and the \textsl{Cantor part} 
of the measure $\Div\A$, while $\jump{ \Div\A}$ is referred to
as the \textsl{jump set} of $\Div\A$.

\medskip
It is proved in \cite{AmbCriMan} that every $\A\in\DM(\Omega)$ admits weak normal traces on a countably \(\H^{N-1}\)--rectifiable set \(\Sigma\subset\Omega\).
To fix notation, we briefly recall how the weak normal traces are defined.

Let \((\Sigma_i)_{i\in\N}\) be a covering of \(\Sigma\), 
and let $N_i\subseteq \Sigma_i$ be
Borel sets satisfying the following properties:
\begin{itemize}
	\item[(R1)] Each \(\Sigma_i\) is an \textsl{oriented} \(C^1\) hypersurface, equipped 
	with its classical normal vector field \(\nu_{\Sigma_i}\).

	\item[(R2)] The sets \(N_i\subseteq \Sigma_i\) are pairwise disjoint, and \(\hh(\Sigma\setminus \bigcup_i N_i) = 0\).

	\item[(R3)] For every $i\in\N$, there exist two open, bounded sets \(\Omega_i, \Omega'_i\) with \(C^1\) boundary,
	whose exterior normal vectors we denote by \(\nu_{\Omega_i}\) and \(\nu_{\Omega_i'}\), such that
	\(N_i\subseteq \partial\Omega_i \cap \partial\Omega'_i\),
	and
	\[
	\nu_{\Sigma_i}(x) = \nu_{\Omega_i}(x) = -\nu_{\Omega'_i}(x)
	\qquad \forall x\in N_i.
	\]
\end{itemize} 

For every open set \(\Omega'\Subset\Omega\) of class \(C^1\), 
the trace of the normal component of \(\A\) on \(\partial\Omega'\) 
can be defined in the sense of distributions by
\begin{equation*}
	\pscal{\Trace[]{\A}{\partial\Omega'}}{\varphi}
	:= \int_{\Omega'} \A\cdot \nabla\varphi\, dx + \int_{\Omega'} \varphi\, d\Div\A,
	\qquad
	\forall\varphi\in C^\infty_c(\Omega).
\end{equation*}
It can be proved (see \cite{AmbCriMan}) that this distribution is
induced by an \(L^\infty\) function on 
\(\partial\Omega'\), still denoted by \(\Trace[]{\A}{\partial\Omega'}\), satisfying
\begin{equation}\label{f:infestrace}
	\|\Trace[]{\A}{\partial\Omega'}\|_{L^\infty(\partial\Omega'; \Haus\res \partial\Omega')}
	\leq \|\A\|_{L^\infty(\Omega')}.
\end{equation}
We fix an orientation on \(\Sigma\) by setting
\[
\nu_{\Sigma}(x) := \nu_{\Sigma_i}(x), \qquad \hh-\text{a.e. on}\  N_i
\]
and we define the weak normal traces of \(\A\) on \(\Sigma\)  by
\[
\Trm{\A}{\Sigma} := \Tr(\A, \partial\Omega_i),
\quad
\Trp{\A}{\Sigma} := -\Tr(\A, \partial\Omega'_i),
\qquad
\hh-\text{a.e.\ on}\ N_i.
\]
By \cite[Proposition 3.2]{AmbCriMan},
these definitions are independent of the choice
of $\Sigma_i$ and $N_i$. 

Moreover, the normal traces belong to
\(L^{\infty}(\Sigma, \H^{N-1}\res\Sigma)\) 
and it holds that
\begin{equation}\label{f:trA}
	\Div \A \res\Sigma =
	\left[\Trp{\A}{\Sigma} - \Trm{\A}{\Sigma}\right]
	\, {\mathcal H}^{N-1} \res\Sigma
\end{equation}
(see \cite[Proposition~3.4]{AmbCriMan}).
In particular, by \eqref{f:infestrace},
$|\Div\A|(\Sigma) \leq 2 \|\A\|_\infty \Haus(\Sigma)$.

In what follows, we use the notation 
\begin{equation}\label{f:trstar}
\Trace[*]{\A}{\Sigma}:= \frac{\Trp{\A}{\Sigma} + \Trm{\A}{\Sigma}}{2}.
\end{equation}

\begin{remark}
A simple example that helps illustrate the above notation is the following.
Let \(E\) be an open, bounded set of class \(C^1\), and assume that
its boundary $\Sigma := \partial E$ is oriented by
the \textbf{inward} unit normal vector $\widetilde{\nu}_E$.
Let \(\A\) be a piecewise $C^1$ vector field
that extends continuously to vector fields \(\A_i\) and \(\A_e\)
in \(\overline{E}\) and \(\R^N\setminus E\) respectively.
Then
\[
\Trm{A}{\partial E} = \A_e \cdot \widetilde{\nu}_E,
\qquad
\Trp{A}{\partial E} = \A_i \cdot \widetilde{\nu}_E,
\]
and
\[
\Div \A \res\Sigma =
(\A_i - \A_e) \cdot \widetilde{\nu}_E
	\, {\mathcal H}^{N-1} \res\Sigma.
\]
\end{remark}

\section{Assumptions on the vector fields and known results}
\label{s:ipo}

In line with \cite{CD4}, this section details the assumptions on the vector fields and reviews the results required in what follows. Specifically, we consider  \(\bsmall\colon \Omega\times \R \to \R^N\) fulfilling the following requirements:
\begin{itemize}
	\setlength\itemsep{4pt}
	\item[(b1)]
	\(\bsmall\) is a bounded Borel function in $ \Omega\times \R$,
that is, $\|\bsmall\|_\infty := \|\bsmall\|_{L^\infty(\Omega\times\R, \R^N)} < +\infty$.
	
	\item[(b2)]
For \(\LLN\)--a.e.\ \(x\in\Omega\), the map
	\(t \mapsto\bsmall(x, t)\) is continuous in \(\R\). 
	
	\item[(b3)]
For every \(t\in\R\), the map
	\(x \mapsto\bsmall(x, t)\) belongs to $\DM(\Omega)$.
	
	\item[(b4)]
	The least upper bound of the measures 
 	$|\Div_x \bsmall(\cdot,t)|$,	
\[\sigma\defeq \bigvee_{t\in \R} |\Div_x \bsmall(\cdot,t)|\,,\]
is a Radon measure.
	
\end{itemize}

Recall that (see \cite[Definition~1.68]{AFP})
\[
\bigvee_{t\in \R} |\Div_x \bsmall(\cdot,t)| (E)=\sup \left\{ \sum_{t\in T}|\Div_x \bsmall(\cdot,t)|(E_t) \right\}
\]
where $T$ is a finite or countable subset of  $\R$, and  $(E_t)_{t\in T}$ is a partition of $E$.

\begin{remark}
In (b1), it is sufficient to assume that
\(\bsmall\) is a \textsl{locally} bounded Borel function,
but at the price of some additional technicalities in the proofs.
\end{remark}

\begin{remark}\label{r:sigmah}
By (b3) and Proposition \ref{p:basicVF}(i), we have 
$\Div_x \bsmall(\cdot, t) \in\radonh$ for every $t\in\R$.
Consequently, the measure $\sigma$ also belongs to $\radonh$.
\end{remark}

\begin{remark}\label{r:sigmah2}
From (b1) and Proposition 2.3 in \cite{CD4}, it follows that
there exists a set \(Z_1\subset\Omega\) such that \(\LLN(Z_1) = 0\) and,
for every \(t\in\R\), the function \(x\mapsto \bsmall(x, t)\)
is approximately continuous on \(\R^N\setminus Z_1\). 
\end{remark}

By the definition of the least upper bound of measures, we have
\(\Div_x\bsmall(\cdot, t) \ll \sigma\) for every \(t\in\R\).
We denote by \(\f(\cdot, t)\) the Radon--Nikod\'ym derivative
of \(\Div_x\bsmall(\cdot, t)\) with respect to \(\sigma\), so that for every $t\in\R$ it follows that
\[
\Div_x\bsmall(x, t) = \f(x, t)\, \sigma \ \text{for $\sigma$-a.e. } x\in\Omega,
\qquad
\f(\cdot, t) \in L^1(\Omega;\sigma).
\]
Furthermore, since \(|\Div_x \bsmall(\cdot, t)| \leq \sigma\), it follows that
\begin{equation*}
	\forall t\in\R:
	\quad
	|\f(x,t)| \leq 1 \quad
	\text{for \(\sigma\)-a.e.}\ x\in\Omega.
\end{equation*}

We extend $\bsmall$ by setting $\bsmall = 0$ in $(\R^N\setminus\Omega)\times\R$,
so that the vector field
\begin{equation}\label{f:B}
	\B(x,t) := \int_0^t \bsmall(x,s)\, ds,
	\qquad x\in\R^N,\ t\in\R,
\end{equation}
is defined for all $(x,t)\in\R^N\times\R$ and
 \(\B(x,0) = 0\) for every \(x\in\R^N\).
By (b2), for every \(x\in\R^N\)
we have \(\bsmall(x,t) = \partial_t \B(x,t)\) for every \(t\in\R\).
Moreover, by (b1) and $\B(x,0)=0$,
there exists a constant $C>0$ such that
\begin{equation}\label{f:LipB}
|\B(x,t)|\leq C |t|\ \ \ \qquad \forall x\in \R^N, \ t\in \R.
\end{equation}

\begin{remark}
In what follows, when we wish to emphasize the fact that we are dealing with a family of $\DM$ fields parametrized by $t$,
we will use the notation 
\[
\bsmall_t:=\bsmall(\cdot,t), \quad
\B_t:=\B(\cdot,t),  \qquad t\in\R.
\]
\end{remark}

The following properties of $\B$ are established in \cite[Lemmas 3.1 and 3.2]{CD4}.

\begin{lemma}\label{l:B}
	For every \(t\in\R\), we have
	\(\B(\cdot, t)\in\DM(\Omega)\) and \(\Div_x\B(\cdot, t) \ll \sigma\).
	If \(\F(\cdot, t)\) denotes the Radon--Nikod\'ym derivative
	of \(\Div_x\B(\cdot, t)\) with respect to \(\sigma\), we have that
	\begin{equation}\label{f:F}
		\Div_x\B(\cdot, t)= \F(\cdot,t) \, \sigma, \qquad 
		\F(x,t) = \int_0^t \f(x, s)\, ds,
		\qquad \text{for \(\sigma\)--a.e.}\ x\in\Omega.
	\end{equation}
	In particular, 	for \(\sigma\)--a.e.\ $x\in\Omega$,
	\begin{equation}\label{f:estiv}
		|\F(x,t) - \F(x,s)| \leq |t-s|, 
		\qquad
	\forall t,s\in\R.
	\end{equation}
\end{lemma}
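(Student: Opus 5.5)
We need to prove Lemma \ref{l:B}: $\B(\cdot,t)\in\DM(\Omega)$, $\Div_x\B(\cdot,t)\ll\sigma$ with density $F(x,t)=\int_0^t f(x,s)\,ds$, and the Lipschitz estimate \eqref{f:estiv}.

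We pass the divergence under the $s$-integral by Fubini. Fix $t>0$; the case $t<0$ is symmetric. First, $\B(\cdot,t)$ is Borel by (b1), since $\bsmall$ is Borel on $\Omega\times\R$, and $|\B(x,t)|\le \|\bsmall\|_\infty |t|$. Hence $\B(\cdot,t)\in L^\infty$. For $\varphi\in C^1_c(\Omega)$, Fubini (justified by boundedness) gives
\[
-\int_\Omega \B(x,t)\cdot\nabla\varphi\,dx=\int_0^t\Big(-\int_\Omega \bsmall(x,s)\cdot\nabla\varphi\,dx\Big)ds=\int_0^t\int_\Omega\varphi\, f(x,s)\,d\sigma(x)\,ds .
\]
The last equality uses (b3) and the definition of $f(\cdot,s)$ as the Radon--Nikod\'ym derivative of $\Div_x\bsmall_s$ with respect to $\sigma$.

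The main technical point is the measurability of the map $(x,s)\mapsto f(x,s)$ jointly, so that we may swap the integrals in $\sigma\otimes\LLU$. I would get a jointly Borel version of $f$ as follows. The map $s\mapsto\langle\Div_x\bsmall_s,\varphi\rangle=-\int\bsmall(x,s)\cdot\nabla\varphi\,dx$ is continuous by (b2) and dominated convergence. Define $f$ via the differentiation of measures, for instance
\[
f(x,s)=\limsup_{r\to0}\frac{\Div_x\bsmall_s(\overline B_r(x))}{\sigma(\overline B_r(x))},
\]
with $r$ restricted to a countable set and balls approximated by continuous cutoffs. This limsup is Borel in $(x,s)$, and for each fixed $s$ it is a version of the density by the Besicovitch differentiation theorem. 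Alternatively, one can avoid pointwise issues: for each $\varphi$, the function $s\mapsto\int\varphi f(\cdot,s)\,d\sigma$ is continuous, and $|f|\le1$ $\sigma$-a.e. for each $s$. Then $\mu_t(\varphi):=\int_0^t\int\varphi f(\cdot,s)\,d\sigma\,ds$ satisfies $|\mu_t(\varphi)|\le |t|\int|\varphi|\,d\sigma$. So $\Div_x\B_t$ is a Radon measure with $|\Div_x\B_t|\le|t|\sigma$, hence finite and absolutely continuous with respect to $\sigma$.

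Having the jointly Borel version with $|f|\le1$ $\sigma\otimes\LLU$-a.e., we apply Fubini to the last integral to obtain $\int_\Omega\varphi\,F(x,t)\,d\sigma$ with $F(x,t)=\int_0^t f(x,s)\,ds$. This identifies the density $F(\cdot,t)$ for every $t$.

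For \eqref{f:estiv}, the bound must hold for $\sigma$-a.e. $x$ simultaneously for all $t,s$. Take the $\sigma$-null set $Z$ of points $x$ where $\LLU(\{s:|f(x,s)|>1\})>0$; $Z$ is null by Fubini. Off $Z$, $|F(x,t)-F(x,s)|\le\int_s^t|f(x,r)|\,dr\le|t-s|$ for all $t,s$. Here we use the explicit integral formula for $F$, rather than a merely a.e.-defined density for each $t$.
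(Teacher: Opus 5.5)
Your argument is correct. The Fubini interchange gives $\Div_x\B_t=\int_0^t\Div_x\bsmall_s\,ds$ distributionally, with $|\Div_x\B_t|\le|t|\,\sigma$. Your differentiation-along-countably-many-radii construction then yields a jointly Borel version of $f$ with $|f|\le 1$ (in fact everywhere). That is exactly what is needed to make sense of $F(x,t)=\int_0^t f(x,s)\,ds$ and to get the Lipschitz bound for all $t,s$ simultaneously.

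The paper does not prove this lemma itself; it quotes it from [CD4, Lemmas 3.1 and 3.2]. Your Fubini-based route is the natural one, and it usefully makes explicit the joint-measurability point that the statement tacitly presupposes.
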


\begin{lemma}\label{l:lebF}
	There exists a set \(Z_2\subset\Omega\)
	with \(\sigma(Z_2) = 0\) and \(\H^{N-1}(Z_2) = 0\) such that
	every \(x_0\in\Omega\setminus Z_2\)
	is a Lebesgue point of \(F(\cdot, t)\) 
	with respect to \(\sigma\),
	for every \(t\in\R\).	
\end{lemma}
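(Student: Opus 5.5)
The plan is to build $Z_2$ from Lebesgue points at rational levels and then pass to all $t\in\R$ using the uniform Lipschitz bound \eqref{f:estiv}.

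For each $q\in\mathbb{Q}$, the function $F(\cdot,q)$ belongs to $L^1_{\rm loc}(\Omega;\sigma)$. Indeed, $|F(x,q)|\le |q|$ for $\sigma$-a.e.\ $x$ by \eqref{f:F} and $|f|\le 1$, so in fact $F(\cdot,q)\in L^\infty(\Omega;\sigma)$.

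\textbf{Step 1: rational levels.} Fix $q\in\mathbb{Q}$. The Lebesgue–Besicovitch differentiation theorem for the Radon measure $\sigma$ gives a $\sigma$-null set $Z^q$ off which every point is a Lebesgue point of $F(\cdot,q)$ with respect to $\sigma$. Let $Z_0$ be the $\sigma$-null set where \eqref{f:estiv} fails. Set
\[
Z' := Z_0\cup\bigcup_{q\in\mathbb{Q}} Z^q ,
\]
which is $\sigma$-null.

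\textbf{Step 2: all levels.} Take $x_0\notin Z'$ and $t\in\R$. Define $F^*(x_0,t)$ as the limit of $F^*(x_0,q)$ as $q\to t$, where $F^*(x_0,q)$ is the Lebesgue value. This limit exists because $q\mapsto F^*(x_0,q)$ is $1$-Lipschitz on $\mathbb{Q}$, being a limit of averages of functions that are $1$-Lipschitz in $q$ $\sigma$-a.e. Then
\[
\mean{B_r(x_0)}|F(x,t)-F^*(x_0,t)|\,d\sigma \le 2|t-q| + \mean{B_r(x_0)}|F(x,q)-F^*(x_0,q)|\,d\sigma ,
\]
so letting first $r\to0$ and then $q\to t$ shows that $x_0$ is a Lebesgue point of $F(\cdot,t)$.

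\textbf{Step 3: making $\H^{N-1}(Z_2)=0$.} This is the main obstacle. A merely $\sigma$-null set can have positive, even infinite, $\H^{N-1}$ measure. We would like to enlarge $Z'$ to a set with $\sigma(Z_2)=0=\H^{N-1}(Z_2)$ on which the Lebesgue property still fails only there, but this cannot be done by adding points. Instead, note that the points of $\Omega\setminus Z'$ that lie outside $\spt\sigma$, or at which $\sigma$ has zero upper density, need care.

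The natural fix is to redefine $Z_2$ as the set of points where the Lebesgue property fails. Points where $\sigma(B_r(x_0))=0$ for small $r$ are trivially not an issue once the averages are interpreted with any convention. The claim $\H^{N-1}(Z_2)=0$ should then follow from a finer differentiation argument. Since $\sigma\in\radonh$ by Remark~\ref{r:sigmah}, we split $\sigma$ into its part on $\bigcup_t\jump{\Div_x\bsmall_t}$, which is $\sigma$-finite for $\H^{N-1}$ with density with respect to $\H^{N-1}$, and its diffuse part.

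On the jump part we would use traces. On a rectifiable set, $F(\cdot,t)\sigma$ is given by \eqref{f:trA} as a difference of the normal traces of $\B_t$. The Lebesgue points relative to $\H^{N-1}\res\Sigma$ exist $\H^{N-1}$-a.e., and they coincide with $\sigma$-Lebesgue points where $\sigma$ has positive $(N-1)$-density. At points where the $(N-1)$-upper density of $\sigma$ vanishes, the failure set of the Lebesgue property is $\sigma$-null. By the standard density estimate $\sigma(E)\ge c\,\H^{N-1}(E)$ on sets of positive upper density, this set is also $\H^{N-1}$-null; this is the estimate I expect to be delicate, and I would rely on the approach of \cite[Lemma~3.2]{CD4}.
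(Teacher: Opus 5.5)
\textbf{Where the proposal stands.} Your Steps~1 and~2 are correct. Rational levels, the Lebesgue--Besicovitch theorem and the uniform Lipschitz bound \eqref{f:estiv} give a $\sigma$-null set off which every point is a $\sigma$-Lebesgue point of $F(\cdot,t)$ for all $t\in\R$. This is the standard argument for the $\sigma$-null part; the paper gives no proof of its own and only refers to Lemma~3.2 of the earlier Crasta--De Cicco paper.

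\textbf{The gap is Step~3, and it cannot be closed.} The extra requirement $\H^{N-1}(Z_2)=0$ is false in general. Since $\sigma\in\radonh$, this requirement is in fact the whole content of the lemma beyond Step~2. Take $\Omega=B_1(0)$ and $\bsmall(x,t):=(|x_1|,0,\dots,0)$ for every $t$. Then (b1)--(b4) hold with $\Div_x\bsmall_t=\mathrm{sgn}(x_1)\,\LLN$, $\sigma=\LLN\res\Omega$ and $F(x,t)=t\,\mathrm{sgn}(x_1)$. For every $x_0\in\Omega$ with $x_{0,1}=0$, every $z\in\R$ and every small $r>0$,
\[
\frac{1}{\sigma(B_r(x_0))}\int_{B_r(x_0)}|F(x,1)-z|\,d\sigma(x)=\frac{|1-z|+|1+z|}{2}\ge 1 .
\]
So no point of $\{x_1=0\}\cap\Omega$ is a Lebesgue point of $F(\cdot,1)$ with respect to $\sigma$. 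Any admissible $Z_2$ must contain this set, which has positive $\H^{N-1}$ measure. The statement should therefore be corrected, not proved as written.

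\textbf{The specific faulty step.} It is your last sentence. The estimate $\sigma\ge c\,\H^{N-1}\res E$ (Theorem~2.56 in the book of Ambrosio, Fusco and Pallara) requires $\limsup_{r\to0^+}\sigma(B_r(x))/(\omega_{N-1}r^{N-1})\ge c>0$ on $E$. You apply it on the set where this upper density vanishes. That is exactly the diffuse region where $\sigma$-null sets can carry positive $\H^{N-1}$ measure, as the hyperplane above shows.

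\textbf{What survives.} Set $\Theta_\sigma:=\{x\in\Omega:\ \limsup_{r\to0^+}\sigma(B_r(x))/r^{N-1}>0\}$. Then $\H^{N-1}(Z_2\cap\Theta_\sigma)=0$, which follows at once from your Step~2 and that density theorem, with no traces needed. By the upper-density half of the same theorem, $\sigma$ does not charge any subset of $\Omega\setminus\Theta_\sigma$ that is $\sigma$-finite for $\H^{N-1}$. Hence on rectifiable sets such as $J_u$ or $\partial^*E$, this weaker version is as useful as the claimed one.
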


\begin{remark}
	Since both $\Div_x\bsmall(x,t)$ and $\sigma$ are finite Radon measures, we may assume that $\f(\cdot,t)$, and hence $\F(\cdot,t)$,
	is a Borel function in $\Omega$ (see, e.g. \cite[Theorem 5.8]{Maggi}).
\end{remark}

One of the key properties of the fields considered here is that the composition $\vv(x) := \B(x,u(x))$, with 
$u\in\BVL[\Omega]$, is itself a $\DM$ field (see  \cite[Lemma~3.7]{CD4}).

\begin{lemma}\label{l:divv}
	Let $\bsmall$
	satisfy assumptions $(b1)-(b4)$,
	and let \(\B\) be defined by \eqref{f:B}.
	For every $u\in \BVL[\Omega]$,
	the function 
%$\vv\colon\Omega\rightarrow\R^N$, defined by
	\[
	\vv(x):=\B(x,u(x))
	\,,
	\qquad x\in\Omega,
	\]
	belongs to $\DM(\Omega)$, and satisfies
	\begin{equation*}
		|\Div\vv|(K) \leq 
		\|u\|_{L^\infty(K)} \sigma(K) +
		\|\bsmall\|_\infty |Du|(K)
		\qquad \forall K\Subset\Omega,\ K\ \text{compact}.
	\end{equation*}
	Moreover, if
	$\B_\varepsilon(\cdot, t) := \rho_\varepsilon \ast \B(\cdot, t)$
	denotes the standard mollification of $\B(\cdot, t)$, then
	\(\vv_\varepsilon (x) := \B_\varepsilon(x, u(x))\)
	converges to \(\vv\) 
	a.e.\ in \(\Omega\) and
	in \(L^1(\Omega, \R^N)\).
\end{lemma}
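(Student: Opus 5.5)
We argue by mollification in $x$ and a duality estimate, since \cite[Lemma~3.7]{CD4} already gives the result and we only need its mechanism. First, $\vv\in L^\infty$ locally: by \eqref{f:LipB}, $|\vv(x)|\le C\|u\|_\infty$. Let $\B_\varepsilon(\cdot,t)=\rho_\varepsilon\ast\B(\cdot,t)$ and $\vv_\varepsilon(x)=\B_\varepsilon(x,u(x))$.

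\emph{Convergence of $\vv_\varepsilon$.} By Remark~\ref{r:sigmah2}, outside the null set $Z_1$ every $x\mapsto\bsmall(x,t)$ is approximately continuous. Since $\B(x,t)=\int_0^t\bsmall(x,s)\,ds$ is Lipschitz in $t$ uniformly in $x$, it suffices to work with $t$ in a countable dense set. For each such $t$, $\rho_\varepsilon\ast\B(\cdot,t)\to\B(\cdot,t)$ at Lebesgue points, hence a.e. The equi-Lipschitz bound in $t$ then gives
\[
\sup_{|t|\le\|u\|_\infty}|\B_\varepsilon(x,t)-\B(x,t)|\to0 \quad\text{for a.e. } x .
\]
Evaluating at $t=u(x)$ gives $\vv_\varepsilon\to\vv$ a.e., and dominated convergence yields the $L^1$ convergence.

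\emph{Divergence bound.} Since $\B_\varepsilon$ is smooth in $x$ and Lipschitz in $t$ with $\partial_t\B_\varepsilon=\rho_\varepsilon\ast\bsmall$, the BV chain rule (first for smooth approximations of $u$, strictly converging, then passing to the limit) gives for $\varphi\in C^1_c(\Omega)$
\[
-\int\vv_\varepsilon\cdot\nabla\varphi\,dx=\int\varphi\,(\Div_x\B_\varepsilon)(x,u)\,dx+\int\varphi\,d\mu_\varepsilon ,
\]
where $|\mu_\varepsilon|\le\|\bsmall\|_\infty|Du|$. On jumps the difference quotient of $\B_\varepsilon$ along $[u^-,u^+]$ is bounded by $\|\bsmall\|_\infty$. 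For the first term, $\Div_x\B_\varepsilon(x,t)=\rho_\varepsilon\ast(F(\cdot,t)\sigma)$ with $|F(\cdot,t)|\le|t|$. Hence
\[
|(\Div_x\B_\varepsilon)(x,u(x))|\le\|u\|_\infty\,(\rho_\varepsilon\ast\sigma)(x).
\]
Passing to the limit using the $L^1$ convergence on the left, and $\rho_\varepsilon\ast\sigma\,\LLN\rightharpoonup\sigma$ weakly$^*$ on compact sets, we get
\[
\Bigl|\int\vv\cdot\nabla\varphi\Bigr|\le\|\varphi\|_\infty\bigl(\|u\|_{L^\infty(K')}\sigma(K')+\|\bsmall\|_\infty|Du|(K')\bigr)
\]
for neighborhoods $K'$ of $\spt\varphi$. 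Shrinking $K'\downarrow K$ gives the estimate, so $\Div\vv$ is a Radon measure. Finiteness of the measure follows because $\sigma$ and $|Du|$ are finite.

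\emph{Main obstacle.} The delicate step is the chain rule for $x\mapsto\B_\varepsilon(x,u(x))$ with $u\in BV$ and only Lipschitz dependence on $t$, since $t\mapsto\partial_t\B_\varepsilon$ is merely continuous. I would avoid any fine structure by approximating $u$ with smooth $u_k\to u$ strictly. For smooth $u_k$ the classical formula holds, and the bound $|\mu_{\varepsilon,k}|\le\|\bsmall\|_\infty|\nabla u_k|\,\LLN$ passes to the limit: since $|\nabla u_k|\,\LLN\rightharpoonup|Du|$, the limit satisfies $|\mu_\varepsilon|\le\|\bsmall\|_\infty|Du|$.
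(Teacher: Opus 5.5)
The paper does not reprove this lemma; it cites \cite[Lemma~3.7]{CD4}. Your argument is correct and is the same mechanism the paper uses in the proof of Theorem~\ref{chainb4}: mollify in $x$, apply the chain rule to $\B_\varepsilon(x,u(x))$, bound the $Du$-terms by $\|\bsmall\|_\infty|Du|$ and $(\Div_x\B_\varepsilon)(x,u)$ by $|u|\,\rho_\varepsilon\ast\sigma$, and let $\varepsilon\to0$. Your detour through smooth, strictly converging $u_k$ is valid but unnecessary. The chain-rule step is not delicate: $\partial_t\B_\varepsilon=\rho_\varepsilon\ast\bsmall$ is jointly continuous, so $\B_\varepsilon$ is $C^1$ in $(x,t)$ and \cite[Theorem~3.96]{AFP} applies directly, as in \eqref{alpha1}. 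Also, for the $L^1$ convergence on unbounded $\Omega$, use the dominating function $\|\bsmall\|_\infty|u|$ rather than the constant bound $C\|u\|_\infty$.

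One step needs correcting: ``shrinking $K'\downarrow K$ gives the estimate.'' Shrinking only yields the bound with $\lim_{K'\downarrow K}\|u\|_{L^\infty(K')}$, which can be strictly larger than $\|u\|_{L^\infty(K)}$. In fact the estimate, read literally with the Lebesgue essential supremum, fails when $\LLN(K)=0$. Take a ball $\Omega$ centred at $0$, $\bsmall(x,t)=\chi_{\{x_1>0\}}(x)\,e_1$, $u\equiv1$, and $K$ a compact piece of $\{x_1=0\}$. Then $|\Div\vv|(K)=\mathcal{H}^{N-1}(K)>0$, while $\|u\|_{L^\infty(K)}=|Du|(K)=0$. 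So what your argument proves, and what the statement should be taken to mean, is the version with a neighbourhood norm.

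You can also sharpen it. Since
\[
\int|\varphi|\,|u|\,\rho_\varepsilon\ast\sigma\,dx=\int\rho_\varepsilon\ast(|\varphi|\,|u|)\,d\sigma,
\]
and $\rho_\varepsilon\ast(|\varphi|\,|u|)\to|\varphi|\,|u|^*$ $\mathcal{H}^{N-1}$-a.e. by \cite[Corollary~3.80]{AFP}, hence $\sigma$-a.e. because $\sigma\in\radonh$, dominated convergence gives $|\Div\vv|\le|u|^*\sigma+\|\bsmall\|_\infty|Du|$ as measures. This gives the correct bound on every compact $K$.
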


\begin{remark}
	In what follows, we will use the notation
	\[
	\Div_x \B(x, u(x)) :=
	(\Div_x \B) (x, u(x)) =
	\left.\Div_x \B (x, t)\right|_{t = u(x)}. 
	\]
	The divergence of the composite function $x\mapsto \B(x,u(x))$ will be denoted simply by
	$\Div \left(\B(x, u(x))\right)$.
\end{remark}

We introduce here three classes of fields satisfying (b1)--(b4), for which the subsequent results simplify. 

\begin{example}[Autonomous case]
If $\bsmall(x,t)=\bsmall_0(t)$ with $\bsmall_0\in C_b(\R,\R^N)$, then $\sigma=0$, 
and $\B$ is the $C^1$ integral function of $\bsmall_0$.
\end{example}

\begin{example}[Separated fields]
\label{ex:separated}
	If $\bsmall(x,t)=g(t)\A(x)$ with $g\in C_b(\R)$ and $\A\in\DM(\Omega)$, then 
	\[
	\sigma=\|g\|_{L^\infty(\R)}\, |\Div \A|,\qquad  f(x,t)= g(t)\|g\|_{L^\infty(\R)}^{-1},
	\] 
	and $\B(x,t)=\A(x)G(t)$, where $G$ is the $C^1$ integral function
	\[
	G(t)=\int_0^t g(s)\, ds,
	\qquad x\in\R^N,\ t\in\R.
	\]
	The linear case treated in earlier works \cite{Anz,ChenFrid,CCDM,CD3,CDM} is recovered for $g(t)=1$.
\end{example}

\begin{example}[Separable fields]
\label{ex:splitted}
	Another interesting case, which we refer to as the \textsl{separable} case, occurs when
	the domain admits a decomposition $\Omega=\Omega^+ \cup \Omega^-$, 
	with $\Omega^+\cap \Omega^- =\emptyset$, such that, for all $t\in\R$,
	\begin{equation*}
		\begin{split}
			\Div_x \bsmall(\cdot,t) \geq 0\quad  \text{in}\ \Omega^+, \qquad
			\Div_x \bsmall(\cdot,t) \leq 0\quad  \text{in}\ \Omega^-,
		\end{split}
	\end{equation*}
equivalently,
	\begin{equation}\label{splitomega1}
		\begin{split}
			\f(\cdot, t) \geq 0\quad  \sigma-\text{a.e.\ in}\ \Omega^+, \qquad
			\f(\cdot, t) &\leq 0\quad  \sigma-\text{a.e.\ in}\ \Omega^- ,
		\end{split}
	\end{equation}
	and, for $\sigma$--a.e.\ $x\in \Omega^+$, the map
$t\mapsto F(x,t)$ is nondecreasing,
while for $\sigma$--a.e.\ $x\in \Omega^-$ it is 
nonincreasing.
\end{example}

\begin{remark}
Clearly, if $\bsmall$ is separated and $g$ does not change sign, then $\bsmall$ is separable with $\Omega^\pm$ given by the Hahn's decomposition of $\diver\A$.

Nevertheless, it is easy to construct separable fields that are not separated. 
For example, take 
$\Omega\subseteq \R^2$, $\bsmall(x_1,x_2,t)= \left(g_1(t), h(x_1,x_2)\, g_2(t) \right)$, with 
$g_1, g_2 \in C_b(\R)$, $g_1 \neq g_2$, $g_2 \geq 0$.
In this case, 
$\Div_x\bsmall(x_1,x_2,t)=h_{x_2}(x_1,x_2) g_2(t)$, and $\Omega^\pm$ are determined by the sign of $h_{x_2}$.
\end{remark}

By Lemma~\ref{l:B} and the results recalled in subsection \ref{ss:div},
 it follows that for every \(t\in\R\),
the traces of the normal component of the vector field \(\B_t\)
on an oriented countably \(\H^{N-1}\)--rectifiable set \(\Sigma\subset\Omega\)
can be defined in the sense of distributions, denoted by
\(\Trace[i,e]{\B_t}{\Sigma}\).
These distributions are induced by \(L^\infty\) functions on \(\Sigma\),
still denoted by \(\Trace[i,e]{\B_t}{\Sigma}\), and satisfy
\[
\|\Trace[i,e]{\B_t}{\Sigma}\|_{L^\infty(\Sigma, \H^{N-1})}
\leq \|\B_t\|_{L^\infty(\Omega)}.
\]
In what follows we use the notation
\begin{equation}\label{f:beta}
	\beta^i(\cdot, t) := \Trp{\B_t}{\Sigma},
	\quad
	\beta^e(\cdot, t) := \Trm{\B_t}{\Sigma},
\end{equation}
so that, by \eqref{f:trA}, 
\begin{equation}\label{mmm}
	\Div_x \B(\cdot, t) \res\Sigma =
	\left[\beta^{i}(\cdot,t)-\beta^{e}(\cdot,t)\right]\, {\mathcal H}^{N-1} \res\Sigma\,.
\end{equation}

The basic estimates for the weak normal traces are established in \cite[Lemma 3.3 and Proposition 3.4]{CD4}.

\begin{proposition}
\label{p:lip}
Let \(\Sigma\subset\Omega\) be an oriented countably \(\H^{N-1}\)--rectifiable set.
There exists a subset \(\Sigma'\subseteq\Sigma\)
with \(\H^{N-1}(\Sigma\setminus\Sigma') = 0\),
and representatives \(\overline{\beta}^{i,e}\) of
\(\beta^{i,e}\)
(in the sense that, for every \(t\in\R\),
the chosen \(\overline{\beta}^{i,e}(\cdot, t)\)
agrees with
\(\beta^{i,e}(\cdot, t)\) \(\H^{N-1}\)--a.e.\ in \(\Sigma\))
such that
\begin{equation*}%\label{f:lip}
	|\overline{\beta}^{i,e}(x,t)
	- \overline{\beta}^{i,e}(x,s)|
	\leq \|\bsmall\|_\infty\, |t-s|
	\qquad
	\forall x\in\Sigma',\ t,s\in\R.
\end{equation*}
\end{proposition}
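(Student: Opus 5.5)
The idea is to find a Lipschitz-in-$t$ representative of the traces by first working on a countable dense set of parameters and then extending by continuity. I will give the argument for $\beta^e$; the one for $\beta^i$ is identical.

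\textbf{Step 1: a pointwise Lipschitz bound for each fixed pair of parameters.} Fix $s<t$. By \eqref{f:B},
\[
\B_t-\B_s=\int_s^t \bsmall(\cdot,\tau)\,d\tau .
\]
This field belongs to $\DM(\Omega)$ by Lemma~\ref{l:B}, and $\|\B_t-\B_s\|_\infty\le \|\bsmall\|_\infty|t-s|$. The weak normal traces are defined distributionally on the $C^1$ boundaries $\partial\Omega_i$, $\partial\Omega_i'$ of (R3), so they are linear in the field:
\[
\beta^{e}(\cdot,t)-\beta^{e}(\cdot,s)=\Tr(\B_t-\B_s,\partial\Omega_i)\qquad \H^{N-1}\text{-a.e. on } N_i .
\]
Applying \eqref{f:infestrace} to the field $\B_t-\B_s$ on $\Omega_i$ then gives
\[
|\beta^e(x,t)-\beta^e(x,s)|\le \|\bsmall\|_\infty|t-s|
\]
for $\H^{N-1}$-a.e. $x\in\Sigma$. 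The exceptional set depends on $s$ and $t$.

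\textbf{Step 2: removing the dependence on the parameters.} Take the countable set $\mathbb{Q}$. Let $E$ be the union of the null sets from Step 1 over all pairs $s,t\in\mathbb{Q}$, together with the null set $\Sigma\setminus\bigcup_i N_i$. Define $\Sigma':=\Sigma\setminus E$, so that $\H^{N-1}(\Sigma\setminus\Sigma')=0$. On $\Sigma'$ the functions $t\mapsto\beta^e(x,t)$, restricted to $\mathbb{Q}$, are $\|\bsmall\|_\infty$-Lipschitz. For $x\in\Sigma'$ let $\overline\beta^e(x,\cdot)$ be the unique Lipschitz extension to $\R$, and on $E$ set $\overline\beta^e:=0$. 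This extension keeps the Lipschitz constant, which is exactly the inequality required on $\Sigma'$.

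\textbf{Step 3: the extension is a representative.} This is the main point to check: for every real $t$, not just rational ones, $\overline\beta^e(\cdot,t)=\beta^e(\cdot,t)$ $\H^{N-1}$-a.e. Take rationals $q_n\to t$. Step 1, applied to the pair $(t,q_n)$, shows that $\beta^e(\cdot,q_n)\to\beta^e(\cdot,t)$ $\H^{N-1}$-a.e. outside a countable union of null sets; indeed $|\beta^e(x,t)-\beta^e(x,q_n)|\le\|\bsmall\|_\infty|t-q_n|$ there. On the other hand, by construction $\beta^e(x,q_n)=\overline\beta^e(x,q_n)\to\overline\beta^e(x,t)$ for every $x\in\Sigma'$. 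The two limits coincide, so they agree $\H^{N-1}$-a.e. on $\Sigma$.

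Finally, these traces do not depend on the choice of $\Sigma_i$ and $N_i$ by \cite[Proposition 3.2]{AmbCriMan}. The only delicate point is the linearity and sign bookkeeping for the interior trace, which is defined as $-\Tr(\cdot,\partial\Omega_i')$. Since \eqref{f:infestrace} bounds the absolute value, the sign does not matter.
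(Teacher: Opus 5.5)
Your proof is correct and follows the same two-step route as the argument the paper cites from \cite{CD4} (Lemma 3.3 and Proposition 3.4). First, an $\H^{N-1}$-a.e.\ Lipschitz estimate for each fixed pair $s,t$, obtained from linearity of the normal traces and the bound \eqref{f:infestrace} applied to $\B_t-\B_s$; then Lipschitz extension from rational parameters, followed by an a.e.\ identification with $\beta^{i,e}(\cdot,t)$ for every real $t$ (the only small addition is to take $\Sigma'$ as the intersection of the full-measure sets obtained for $\beta^i$ and $\beta^e$, so that one $\Sigma'$ serves both traces).
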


\begin{remark}
	In what follows, we always identify \(\beta^{i,e}\) with 
	the representatives \(\overline{\beta}^{i,e}\)
	provided by Proposition~\ref{p:lip}.
\end{remark}

\begin{remark}\label{r:trbsmall}
Since $\bsmall_t\in \DM(\Omega)$ for every $t\in\R$, the normal traces
\[
\gamma^{i,e}(\cdot, t):=\Trace[i,e]{\bsmall_t}{\Sigma}
\]
belongs to $L^\infty(\Sigma)$.
Moreover, one easily checks that
$$
\beta^{i,e}(\cdot, t)=\int_0^t\gamma^{i,e}(\cdot, s)\,ds.
$$
\end{remark}

Finally, we recall the representation formula for the weak normal traces of the composite function
$\B(x,u(x))$, as given in \cite[Proposition 4.5]{CD4}.

\begin{proposition}\label{p:traces}
Let $\bsmall$
satisfy assumptions $(b1)$-$(b4)$, 
let $u\in\BV\cap L^\infty(\Omega)$
and let $\Sigma\subset\Omega$ be an oriented countably $\H^{N-1}$-rectifiable set.
%oriented as in Section~\ref{ss:div}.
Then the weak normal traces on $\Sigma$ of the composite function $\B(x,u(x))\in\DM(\Omega)$, 
as defined in Lemma \ref{l:divv}, are given by
\begin{gather}
\label{f:tracesv1}
\Trp{\B(\cdot,u)}{\Sigma} (x)=
\begin{cases}
	\beta^{i}(x, u^i(x)),
	&\text{for $\H^{N-1}$-a.e.}\ x\in J_u,\\
	\beta^{i}(x, \ut(x)),
	&\text{for $\H^{N-1}$-a.e.}\ x\in \Sigma\setminus J_u,
\end{cases}
\\ \label{f:tracesv2}
\Trm{\B(\cdot,u)}{\Sigma}(x)=
\begin{cases}
	\beta^{e}(x, u^e(x)),
	&\text{for $\H^{N-1}$-a.e.}\ x\in J_u,\\
	\beta^{e}(x, \ut(x)),
	&\text{for $\H^{N-1}$-a.e.}\ x\in \Sigma\setminus J_u.
\end{cases}
\end{gather}
With the convention $u^{i,e}(x) = \ut(x)$ for $x\in\Omega\setminus S_u$, we obtain the compact expression
\[
\Trace[i,e]{\B(\cdot,u)}{\Sigma} (x)= \beta^{i,e}(x, u^{i,e}(x))
\qquad
\text{for $\H^{N-1}$-a.e.\ $x\in\Sigma$}.
\]
\end{proposition}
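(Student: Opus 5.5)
We have to prove Proposition~\ref{p:traces}, the representation of the weak normal traces of $\vv=\B(x,u(x))$ on $\Sigma$. The plan is to reduce to a single $C^1$ piece, approximate by mollification in $x$, and pass to the limit by a blow-up argument.

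\emph{Localization.} By the covering (R1)--(R3) and the independence of the traces from the choice of $\Sigma_i,N_i$, it suffices to work on one $N_i$. Write $\Omega_i,\Omega_i'$ for the two $C^1$ domains touching along $N_i$. It is enough to prove, for $\Haus$-a.e.\ $x_0\in N_i$,
\[
\Tr(\vv,\partial\Omega_i)(x_0)=\beta^e(x_0,u^e(x_0)).
\]
The interior trace is handled symmetrically via $\Omega_i'$. Since $u\in L^\infty$, we may restrict $t$ to the compact interval $[-\|u\|_\infty,\|u\|_\infty]$.

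\emph{Approximation.} We use Lemma~\ref{l:divv}: with $\B_\varepsilon=\rho_\varepsilon*\B(\cdot,t)$ and $\vv_\varepsilon(x)=\B_\varepsilon(x,u(x))$, we have $\vv_\varepsilon\to\vv$ in $L^1$. For each fixed $\varepsilon$, $\B_\varepsilon$ is smooth in $x$ and Lipschitz in $t$, so the classical BV chain rule applies. Its trace on $\partial\Omega_i$ from outside is $\B_\varepsilon(x,u^e(x))\cdot\nu$, using the one-sided BV traces of $u$ on $C^1$ hypersurfaces, which coincide with $u^{e}$ on $J_u$ and with $\ut$ elsewhere, $\Haus$-a.e.

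Passing to the limit in the traces needs more than $L^1$ convergence: we also need uniform control of $|\Div \vv_\varepsilon|$ near $\Sigma$. This is the main obstacle, because $\Div_x\B_\varepsilon$ need not converge nicely on $\Sigma$ itself. So instead of working on the fixed surface, I would use a blow-up argument at $x_0$.

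\emph{Blow-up.} Choose $x_0$ to be:
\begin{itemize}
\item a Lebesgue point of the one-sided traces of $u$ and of $\beta^e(\cdot,t)$ for all rational $t$ (and then for all $t$, by the Lipschitz representative of Proposition~\ref{p:lip});
\item a point where $\sigma$ and $|Du|$ have zero $(N-1)$-dimensional density on the exterior half-ball.
\end{itemize}
This last condition holds $\Haus$-a.e., since it is only the $\Sigma$ part that carries density.

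Consider the rescalings $\vv_r(y)=\vv(x_0+ry)$ on the exterior half ball. By the divergence estimate of Lemma~\ref{l:divv} applied to $K=B_r^e$, the divergences of the rescaled fields tend to $0$. Meanwhile $u(x_0+r\cdot)\to u^e(x_0)$ in $L^1$ on the half-ball. Together with the Lipschitz dependence in $t$, this gives
\[
\vv_r - \B(x_0+r\cdot,u^e(x_0))\to0 \quad\text{in } L^1 .
\]

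The weak normal trace at $x_0$ can be computed as the limit of averages of the rescaled fields, in the spirit of \cite{AmbCriMan}: it equals the limit of $-\int \vv_r\cdot\nabla\varphi$ on the half-ball plus a divergence term that vanishes. Applying the same blow-up to the field $\B(\cdot,u^e(x_0))$, whose trace at $x_0$ is $\beta^e(x_0,u^e(x_0))$, the two traces agree.

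The delicate step is justifying the trace-from-blow-up formula for general $\DM$ fields. If that fails, the fallback is to test against $\varphi\,\chi$ with $\chi$ a cutoff to $N_i$, and use the Gauss--Green definition of the trace together with dominated convergence in $t$.
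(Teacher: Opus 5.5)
The paper does not prove this proposition: it quotes it from \cite[Proposition~4.5]{CD4}, so there is no in-paper argument to compare with, and your proof has to stand on its own. Your route compares $\vv$, in a blow-up at $x_0$, with the frozen field $\B_{t_0}$, $t_0:=u^e(x_0)$. It is correct and self-contained once one false claim is repaired and the step you left unfinished is written out. The mollification paragraph and the fallback can be dropped.

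\textbf{The density condition.} As stated, it is false. The half-balls $B_r^e(x_0)$ move with $x_0$, so no density theorem applies to them, and the claim fails in general. If $\Omega_i$ is a ball, the whole cap $\partial\Omega_i\cap B_r(x_0)\setminus\{x_0\}$ lies in the open half-ball $B_r^e(x_0)$. So for $u=\chi_{\Omega_i}$, or for $\sigma$ charging $\partial\Omega_i$, one has $|Du|(B_r^e(x_0))\geq c\,r^{N-1}$ at every $x_0$, and the rescaled divergences on the half-ball need not vanish. What you need is
\[
\sigma(\Omega_i\cap B_r(x_0))+|Du|(\Omega_i\cap B_r(x_0))=o(r^{N-1}).
\]
This does hold for $\H^{N-1}$-a.e.\ $x_0\in\partial\Omega_i$, because $\sigma\res\Omega_i$ and $|Du|\res\Omega_i$ do not charge $\partial\Omega_i$. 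It is also exactly the region that enters the definition of $\Tr(\cdot,\partial\Omega_i)$.

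\textbf{The step you call delicate.} It needs no blow-up characterization of traces. Note also that averages of the normal component over half-balls are not the right quantity for a $\DM$ field, since they also pick up lateral flux. Let $\mathbf{w}:=\vv-\B_{t_0}\in\DM(\Omega)$ and $\varphi_r:=\varphi((\cdot-x_0)/r)$ with $\varphi\in C^\infty_c(B_1)$ and $\int_{\nu_\Sigma(x_0)^\perp}\varphi\,d\H^{N-1}\neq0$. The defining identity of the trace, together with $|\mathbf{w}|\leq\|\bsmall\|_\infty|u-t_0|$, gives
\[
\Bigl|\int_{\partial\Omega_i}\varphi_r\,\Tr(\mathbf{w},\partial\Omega_i)\,d\H^{N-1}\Bigr|
\leq\frac{\|\nabla\varphi\|_\infty\|\bsmall\|_\infty}{r}\int_{\Omega_i\cap B_r(x_0)}|u-t_0|\,dy
+\|\varphi\|_\infty\,|\Div\mathbf{w}|(\Omega_i\cap B_r(x_0)).
\]
Both terms are $o(r^{N-1})$:
\begin{itemize}
\item the first by the one-sided Lebesgue property of $u$ at $x_0$;
\item the second because $|\Div\vv|\leq\|u\|_\infty\sigma+\|\bsmall\|_\infty|Du|$ by Lemma~\ref{l:divv} (extended to open sets by inner regularity), and $|\Div_x\B_{t_0}|=|F(\cdot,t_0)|\,\sigma\leq|t_0|\,\sigma$ by \eqref{f:estiv} and $F(\cdot,0)=0$.
\end{itemize}
Divide by $r^{N-1}$ and use that $x_0$ is a Lebesgue point, on $\partial\Omega_i$, of both $\Tr(\vv,\partial\Omega_i)$ and $\Tr(\B_{t_0},\partial\Omega_i)$. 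This yields $\Tr(\vv,\partial\Omega_i)(x_0)=\beta^e(x_0,t_0)$.

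The only uniformity issue is that $t_0$ depends on $x_0$. You already handle it correctly for the traces: take Lebesgue points for rational $t$ and use the Lipschitz representatives of Proposition~\ref{p:lip}, applied on $\partial\Omega_i$. The divergence bound is automatically uniform in $t$, because it is controlled by the single measure $\sigma$.
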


\section{Definition and basic properties of nonlinear pairings}\label{s:pair}

We introduce a new definition of pairing between a $BV$ function  $u$ and the field $\bsmall(x,u(x))$, depending on the selected representative $\Prec{u}$ (see \eqref{f:pr}). 
Relying on Lemmas~\ref{l:B} and~\ref{l:divv}, this pairing extends the divergence chain rule \eqref{classical} to the present setting, providing a well--defined measure--theoretic interpretation.

\begin{definition}[Nonlinear $\lambda$-pairing]\label{d:pair}
Let $\bsmall$
satisfy assumptions $(b1)-(b4)$, 
let \(\B\) be defined by \eqref{f:B}, and let $u\in\BV\cap L^\infty(\Omega)$.
For 
$\lambda\in\spacelambda$,
the nonlinear $\lambda$--pairing
between $\bsmall(\cdot,u)$ and $Du$ is the 
finite Radon measure in $\Omega$ defined by 
\begin{equation}
	\label{f:pairlambda}
	\pair{\bsmall(\cdot,u), Du}:= -F(x, \Prec u)\, \sigma + \Div(\B(x,u))\,.
\end{equation}
\end{definition}

By Lemma \ref{l:B}, the definition of $\sigma$, and Lemma \ref{l:divv}, this measure belongs to $\radonh$.

\begin{remark}
Using \eqref{f:B} and \eqref{f:F}, we obtain,
for every $\varphi\in C_c^1(\Omega)$,
\begin{equation}
	\label{f:bxurepr1}
		\langle(\bsmall(\cdot,u), Du)_\lambda,\varphi\rangle  = 
		-\int_\Omega \varphi(x) \int_0^{u^\lambda(x)} f(x,t)\, dt\, d\sigma
	 - \int_\Omega \int_0^{u(x)}  \bsmall(x,t)\cdot \nabla\varphi(x)\, dt\, dx.
\end{equation}
\end{remark}

In \cite{CD4}, the first two authors introduced another nonlinear  pairing, defined by
\begin{equation}\label{f:bxu}
\spair{\bsmall(\cdot,u), Du} :=
-\frac{1}{2}
\left[ F(x, u^+(x)) + F(x, u^-(x)) \right]\, \sigma
+ \Div(\B(x,u))\,.
\end{equation}
We refer to  $\spair{\bsmall(\cdot,u), Du}$ as the 
\textsl{standard (external) nonlinear pairing}. 
Its extension to a general
$\lambda \not\equiv 1/2$ is given next.

\begin{definition}[External nonlinear  $\lambda$-pairing]\label{d:expair}
	Let $\bsmall$
	satisfy assumptions $(b1)-(b4)$, 
	let \(\B\) be defined by \eqref{f:B}, and let $u\in\BV\cap L^\infty(\Omega)$.
For $\lambda\in\spacelambda$,
	the external nonlinear $\lambda$--pairing
	between $\bsmall(\cdot,u)$ and $Du$ is the measure 
	\begin{equation}
		\label{f:pairlambdaext}
		\Pair{\lambda}{\bsmall(\cdot,u), Du}:= -\left((1-\lambda)F(x, u^-)+\lambda F(x, u^+)\right) \, \sigma + \Div(\B(x,u))\,.
	\end{equation}
\end{definition}

\begin{remark}
Since the nonlinear pairings differ from the external ones only on the jump set $J_u$, they share many of the same fundamental properties. 
In particular, for every $\lambda\in\spacelambda$,
\begin{equation}\label{nosalti}
\begin{split}
\pair{\bsmall(x,u),Du}\res (\Omega\setminus J_u) & =-F(x, \widetilde u)\, \sigma \res (\Omega\setminus J_u) + \Div \B (x,u) \res (\Omega\setminus J_u)  \\ &= 	\Pair{\lambda}{\bsmall(x,u),Du} \res (\Omega\setminus J_u).			
\end{split}
\end{equation}
However, the external pairing is not suitable for addressing semicontinuity issues (see Section \ref{s:sc}), while the Coarea Formula seems to hold in general only for this specific choice (see Remark \ref{r:nocoaera}). Hence we develop the theory for both pairings.
\end{remark}

\begin{remark}
\label{r:auto1}
In the autonomous case $\bsmall(x,t)=\bsmall(t)$, the pairing measure is independent of $\lambda$, since
$\sigma=0$, and
\begin{equation}\label{autonomous}
	(\bsmall(u), Du)_\lambda=[\bsmall(u), Du]_\lambda=\Div(\B(u)).
\end{equation}
\end{remark}

The following result exhibits an explicit representation of $\lambda$--pairings on the jump set $J_u$ of $u$, in terms of the weak normal traces of $\B$.

\begin{proposition}
\label{p:tracesonJ}
Let $u\in\BV\cap L^\infty(\Omega)$ be given, and assume 
that the jump set $J_u$ is oriented so that $u^i=u^+$ and $u^e=u^-$. Then
\begin{equation}
\label{f:tronJ}
\begin{split}
 &	\pair{\bsmall(\cdot,u), Du}  \res J_u  \\  &
 \phantom{bbbbbbb} =
	[(\beta^i(x,u^+)-\beta^i(x,\Prec{u}))+(\beta^e(x,\Prec{u})- \beta^e(x,u^-))] \H^{N-1} \res J_u \\ & 
	\phantom{bbbbbbb} =
	\left[\mean{u^-(x)}^{u^+(x)}
\Trl{\bsmall_t}{J_u}(x)
	\,dt\right] |D^ju|\,.
\end{split}
\end{equation}
Here 
$\Trl{\bsmall_t}{J_u}
:=\chi_{[u^-, u^\lambda]}(t)
\Trm{\bsmall_t}{J_u}
+\chi_{[u^\lambda, u^+]}(t)
\Trp{\bsmall_t}{J_u}$.
Moreover, 
\begin{equation}
	\label{f:tronJ666}
	\begin{split}
		&	\Pair{\lambda}{\bsmall(\cdot,u), Du}  \res J_u  \\  & \phantom{bbbbbbb} 
		=
		\left[(1-\lambda)[\beta^i(x,u^+)-\beta^i(x,u^-)]+\lambda[\beta^e(x,u^+)-\beta^e(x,u^-)]\right] \H^{N-1} \res J_u
		\\  & \phantom{bbbbbbb} 
		=\left[\mean{u^-(x)}^{u^+(x)}[(1-\lambda)\gamma^i(x,t)+\lambda\gamma^e(x,t)]\,dt\right] |D^ju|.
	\end{split}
\end{equation}
\end{proposition}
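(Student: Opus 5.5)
The plan is to compute the two terms of Definition~\ref{d:pair} separately after restricting to the rectifiable set $\Sigma=J_u$, oriented so that $u^i=u^+$ and $u^e=u^-$.

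\textbf{The divergence term.} The field $\vv=\B(\cdot,u)$ belongs to $\DM(\Omega)$ by Lemma~\ref{l:divv}. Applying \eqref{f:trA} to $\vv$ and then Proposition~\ref{p:traces} gives
\[
\Div(\B(x,u))\res J_u=\big[\beta^i(x,u^+)-\beta^e(x,u^-)\big]\,\H^{N-1}\res J_u .
\]

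\textbf{The $\sigma$-term.} Since $\sigma\in\radonh$ (Remark~\ref{r:sigmah}), we need the density of $F(x,s)\,\sigma\res J_u$ for $s=u^\lambda(x)$. For fixed $t$, \eqref{mmm} gives $F(\cdot,t)\,\sigma\res J_u=[\beta^i(\cdot,t)-\beta^e(\cdot,t)]\,\H^{N-1}\res J_u$. The difficulty is to pass from a fixed $t$ to the variable value $t=u^\lambda(x)$.

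\emph{Main obstacle: substituting a variable parameter.} For each fixed $t$, the identity $F(x,t)\,\theta(x)=\beta^i(x,t)-\beta^e(x,t)$ holds for $\H^{N-1}$-a.e.\ $x\in J_u$, where $\theta$ is the density of $\sigma\res J_u$ with respect to $\H^{N-1}$. The exceptional null set depends on $t$.
\begin{itemize}
\item First I restrict to rational $t$ and take the union of the corresponding null sets, which is still $\H^{N-1}$-negligible.
\item Then I extend to all $t\in\R$ by continuity. The left side is $1$-Lipschitz in $t$, $\sigma$-a.e., by \eqref{f:estiv}. Since $\sigma\ll\H^{N-1}$ on $J_u$, the exceptional set is also $\H^{N-1}$-negligible there, and $\theta$ is bounded by \eqref{f:trA}. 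The right side is Lipschitz in $t$ on $\Sigma'$ by Proposition~\ref{p:lip}.
\end{itemize}
The identity then holds for all $t$ outside a single null set, so we may insert $t=u^\lambda(x)$, which is a Borel function. This gives
\[
F(x,u^\lambda)\,\sigma\res J_u=\big[\beta^i(x,u^\lambda)-\beta^e(x,u^\lambda)\big]\,\H^{N-1}\res J_u .
\]

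\textbf{The pairing on $J_u$.} Subtracting the $\sigma$-term from the divergence term yields the first equality in \eqref{f:tronJ}.

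For the second expression I use Remark~\ref{r:trbsmall}, which writes $\beta^{i,e}(x,b)-\beta^{i,e}(x,a)=\int_a^b\gamma^{i,e}(x,t)\,dt$. This holds $\H^{N-1}$-a.e.\ simultaneously for all $a,b$, again by the rational-density and Lipschitz argument. It turns the bracket into
\[
\int_{u^-}^{u^\lambda}\gamma^e\,dt+\int_{u^\lambda}^{u^+}\gamma^i\,dt=\int_{u^-}^{u^+}\Trl{\bsmall_t}{J_u}\,dt .
\]
Finally, $|D^ju|=(u^+-u^-)\,\H^{N-1}\res J_u$, so dividing by $u^+-u^-$ gives the mean value form.

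\textbf{The external pairing.} For $\Pair{\lambda}{\cdot}$ the same identity is applied at $t=u^\pm$:
\[
(1-\lambda)F(x,u^-)+\lambda F(x,u^+)\quad\text{has }\H^{N-1}\text{-density}\quad (1-\lambda)(\beta^i-\beta^e)(x,u^-)+\lambda(\beta^i-\beta^e)(x,u^+).
\]
Subtracting this from $\beta^i(x,u^+)-\beta^e(x,u^-)$ and regrouping gives
\[
(1-\lambda)\big[\beta^i(x,u^+)-\beta^i(x,u^-)\big]+\lambda\big[\beta^e(x,u^+)-\beta^e(x,u^-)\big].
\]
The same integral representation and normalization by $|D^ju|$ then give \eqref{f:tronJ666}.
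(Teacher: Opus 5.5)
Your proposal is correct and follows the paper's own proof: compute $\Div(\B(x,u))\res J_u$ via \eqref{f:trA} and Proposition~\ref{p:traces}, compute $F(x,u^\lambda)\,\sigma\res J_u$ via \eqref{mmm}, subtract, and rewrite using Remark~\ref{r:trbsmall}; your rational-$t$/Lipschitz argument simply makes rigorous the substitution $t=u^\lambda(x)$ (and $t=u^\pm(x)$) that the paper takes for granted. One justification is reversed: $\sigma\ll\H^{N-1}$ does not make the $\sigma$-null set $Z$ where \eqref{f:estiv} fails $\H^{N-1}$-null; instead use $\int_{Z\cap J_u}\theta\,d\H^{N-1}=\sigma(Z\cap J_u)=0$, so $\theta=0$ $\H^{N-1}$-a.e.\ on $Z\cap J_u$, and hence $t\mapsto F(x,t)\,\theta(x)$ is still continuous for $\H^{N-1}$-a.e.\ $x\in J_u$.
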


\begin{proof}
Recalling that $F(x,\Prec{u}(x)) \, \sigma = \Div_x \B(x,\Prec{u}(x))$ and using \eqref{mmm}, we get
\begin{equation*}%\label{f:trJ1}
	F(x,\Prec{u}(x)) \, \sigma \res J_u= \left[\beta^i(x,\Prec{u}(x)) - \beta^e(x,\Prec{u}(x)) \right]\H^{N-1} \res J_u.
\end{equation*}
On the other hand, since $\B(x,u(x))$ belongs to $\DM(\Omega)$, by \eqref{f:trA}
we have that
\begin{equation}
	\label{f:trJ2}
\begin{split}
\Div\B(x,u(x))\res J_u  & = \left[ \Trp{\B(x,u)}{J_u}- \Trm{\B(x,u)}{ J_u}\right] \H^{N-1} \res J_u \\
& = \left[ \beta^i(x,u^+(x)) - \beta^e(x,u^-(x))\right] \H^{N-1} \res J_u.
\end{split}
\end{equation}
where in the last equality we have used \eqref{f:tracesv1},  \eqref{f:tracesv2}, and the orientation chosen for $J_u$.

The first equality in \eqref{f:tronJ} then follows from the definition of $\pair{\bsmall(\cdot,u), Du} $. 
Moreover, by Remark \ref{r:trbsmall} we infer that
\begin{equation*}%\label{f:tronJ26}
	\begin{split}
		& \pair{\bsmall(\cdot,u), Du}  \res J_u   \\ & \phantom{bbbbbbb} =
		[\beta^e(x,\Prec{u}(x))- \beta^e(x,u^-(x))+\beta^i(x,u^+(x))-\beta^i(x,\Prec{u}(x))] \H^{N-1} \res J_u
		\\  & \phantom{bbbbbbb}
		=\left[\int_{u^-}^{u^\lambda}\gamma^e(x,t)\,dt+\int_{u^\lambda}^{u^+}\gamma^i(x,t)\,dt\right]
		\H^{N-1} \res J_u
		\\  & \phantom{bbbbbbb}
		=\left[\int_{u^-}^{u^+}[\chi_{[u^-, u^\lambda]}\gamma^e(x,t)+\chi_{[u^\lambda, u^+]}\gamma^i(x,t)]\,dt\right] \H^{N-1} \res J_u
		\\  & \phantom{bbbbbbb}
		=\left[\mean{u^-}^{u^+}[\chi_{[u^-, u^\lambda]}\gamma^e(x,t)+\chi_{[u^\lambda, u^+]}\gamma^i(x,t)]\,dt\right] |D^ju|.
	\end{split}
\end{equation*}
Similarly, \eqref{f:tronJ666} follows from \eqref{f:trJ2} and the fact that
\begin{equation*}%\label{f:trJ1666}
	\begin{split}
		[(1 & - \lambda)   F(x, u^-)  + \lambda  F(x, u^+)] \, \sigma \res J_u   
		  \\ &
		= \left[(1-\lambda)\left[\beta^i(x,u^-) - \beta^e(x,u^-)\right]  
		+
		\lambda\left[\beta^i(x,u^+) - \beta^e(x,u^+)
		\right]\right]\H^{N-1} \res J_u.
\qedhere	
\end{split}
\end{equation*}
\end{proof}

A key property of the pairings $\pair{\bsmall(\cdot,u), Du}$ and $\Pair{\lambda}{\bsmall(\cdot,u), Du}$ is that they are absolutely continuous with respect to the measure $|Du|$.

\begin{theorem}%[$\mathcal{DM}^{\infty}$-dependence and $u\in BV$]
	\label{chainb4}
	Let $\bsmall$
	satisfy assumptions $(b1)-(b4)$ and let $u\in\BV\cap L^\infty(\Omega)$.
	Then, for every 
$\lambda\in\spacelambda$ 
and for every Borel set $E\subset\Omega$,
\begin{gather}
		|\pair{\bsmall(\cdot,u), Du}|(E)\leq
		\|\bsmall\|_{\infty}\,
		|Du|(E),
\label{f:mubdd}
\\
		|\Pair{\lambda}{\bsmall(\cdot,u), Du}|(E)\leq
		\|\bsmall\|_{\infty}\,
		|Du|(E)
\,.
\label{f:mubdd2}	
\end{gather}
\end{theorem}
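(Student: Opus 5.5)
The natural strategy is to split $\Omega$ into $J_u$ and $\Omega\setminus J_u$. On $J_u$ we have an explicit density from Proposition~\ref{p:tracesonJ}. On $\Omega\setminus J_u$ all the pairings coincide by \eqref{nosalti}, so there a single estimate for one known pairing suffices. Since all the measures involved belong to $\radonh$ and $\H^{N-1}(S_u\setminus J_u)=0$, the set $S_u\setminus J_u$ can be ignored.

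\emph{Jump part.} We orient $J_u$ so that $u^i=u^+$. By \eqref{f:tronJ}, the density of $\pair{\bsmall(\cdot,u),Du}\res J_u$ with respect to $|D^ju|=(u^+-u^-)\H^{N-1}\res J_u$ is an average over $t\in[u^-,u^+]$ of the normal traces $\Trace[i,e]{\bsmall_t}{J_u}$. By \eqref{f:infestrace}, applied for each $t$, these traces are bounded by $\|\bsmall_t\|_\infty\le\|\bsmall\|_\infty$ $\H^{N-1}$-a.e. The exceptional null set depends on $t$. To handle this, we use the Lipschitz representatives of Proposition~\ref{p:lip} and write the density in the first form of \eqref{f:tronJ}, namely as differences $\beta^{i,e}(x,s)-\beta^{i,e}(x,r)$. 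Proposition~\ref{p:lip} gives $|\beta^{i,e}(x,s)-\beta^{i,e}(x,r)|\le\|\bsmall\|_\infty|s-r|$ for all $s,r$ outside a single $\H^{N-1}$-null set. The density is then at most $\|\bsmall\|_\infty[(u^+-u^\lambda)+(u^\lambda-u^-)]=\|\bsmall\|_\infty(u^+-u^-)$. The same argument applies to \eqref{f:tronJ666}: each of the two brackets is bounded by $\|\bsmall\|_\infty(u^+-u^-)$, and their convex combination satisfies the same bound. This gives both \eqref{f:mubdd} and \eqref{f:mubdd2} restricted to $J_u$.

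\emph{Diffuse part.} On $\Omega\setminus J_u$ both pairings equal $\mu:=-F(x,\ut)\sigma+\Div(\B(x,u))$, which is the restriction of the standard pairing \eqref{f:bxu} from \cite{CD4}. To prove $|\mu|\res(\Omega\setminus J_u)\le\|\bsmall\|_\infty|Du|$, I would argue by approximation.

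First, mollify in $x$: set $\B_\varepsilon=\rho_\varepsilon*\B$, $\bsmall_\varepsilon=\rho_\varepsilon*\bsmall$, so that $|\bsmall_\varepsilon|\le\|\bsmall\|_\infty$. Then approximate $u$ by $u_k\in C^\infty$ converging strictly to $u$, with $u_k\to\ut$ $\H^{N-1}$-a.e. outside $J_u$ (the Anzellotti--Giusti approximation). For smooth data the classical chain rule gives
\[
\Div\B_\varepsilon(x,u_k)-(\Div_x\B_\varepsilon)(x,u_k)=\bsmall_\varepsilon(x,u_k)\cdot\nabla u_k,
\]
and the right-hand side has mass at most $\|\bsmall\|_\infty|\nabla u_k|$.

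For a test function $\varphi$, we pass to the limit first in $\varepsilon$ and then in $k$ (or along a diagonal sequence). The term $\int\B_\varepsilon(x,u_k)\cdot\nabla\varphi$ converges by Lemma~\ref{l:divv}. The term $\int\varphi F_\varepsilon(x,u_k)\,d\sigma$-type contributions require $\Div_x\B_\varepsilon(\cdot,u_k)\to F(x,\ut)\sigma$ on the diffuse part. Here I would use the Lipschitz bound \eqref{f:estiv} together with $\sigma\in\radonh$ and Lemma~\ref{l:lebF}. Lower semicontinuity of total variation and strict convergence ($|Du_k|\rightharpoonup|Du|$) then give $|\mu|(A)\le\|\bsmall\|_\infty|Du|(\overline A)$ for open $A$, and hence the bound for Borel sets.

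\emph{Main obstacle.} The delicate step is this last convergence: controlling $(\Div_x\B_\varepsilon)(x,u_k)$ against the measure $\sigma$, which may charge sets of dimension $N-1$ and Cantor-type sets. Jump points of $u$ must be kept separate, because there the limit yields averaged traces rather than $F(x,\ut)$. If this step proves too delicate, I would try a shortcut instead. Note that $|\mu|\ll|Du|$ would follow if the standard pairing of \cite{CD4} is already known to satisfy $|\spair{\bsmall(\cdot,u),Du}|\le\|\bsmall\|_\infty|Du|$; in that case I would import that estimate and restrict it to $\Omega\setminus J_u$ via \eqref{nosalti}, which reduces the whole theorem to the jump computation above.
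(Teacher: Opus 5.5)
Your proposal is correct. Its skeleton is the paper's: on $J_u$ you bound the first form of \eqref{f:tronJ} and the two brackets of \eqref{f:tronJ666} with the Lipschitz representatives of Proposition~\ref{p:lip}, exactly as in \eqref{f:laest2}, and off $J_u$ you reduce via \eqref{nosalti} to the standard pairing \eqref{f:bxu}. The only difference is how you get the bound for the standard pairing.

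\textbf{The fallback.} Importing that bound from \cite{CD4} is legitimate; the paper itself says it gives an argument independent of the one in \cite{CD4}.

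\textbf{The paper's argument.} It mollifies only $\B$ and applies the $BV$ chain rule to $u$ itself, so $u$ is never approximated. The one nontrivial limit, \eqref{f:ie}, is handled by a Fubini rewriting of $\int\varphi\,\Div_x\B_\varepsilon(x,u)\,dx$ as
$\int_0^C\int\rho_\varepsilon\ast(\varphi\chi_{\{u>t\}})\,f(\cdot,t)\,d\sigma\,dt$.
It then uses that $\rho_\varepsilon\ast(\varphi\chi_{\{u>t\}})\to\varphi\chi^*_{\{u>t\}}$ $\H^{N-1}$-a.e.\ (hence $\sigma$-a.e.), together with Lemma~\ref{funzcaracter}. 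The remaining chain-rule terms are bounded directly by $\|\varphi\|_\infty\|\bsmall\|_\infty|Du|$ of a neighbourhood of $\spt\varphi$.

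\textbf{Your double approximation.} It also works, and the step you call delicate is easy if you let $\varepsilon\to0$ first.
\begin{enumerate}
\item For fixed smooth $u_k$, write $(\Div_x\B_\varepsilon)(x,t)=\int\rho_\varepsilon(x-y)F(y,t)\,d\sigma(y)$. By \eqref{f:estiv} and $|u_k(x)-u_k(y)|\le L_k\varepsilon$, replacing $F(y,u_k(x))$ by $F(y,u_k(y))$ costs at most $\|\varphi\|_\infty L_k\varepsilon\,\sigma(\spt\varphi+\overline{B_\varepsilon})$. So Lemma~\ref{l:lebF} is not needed.
\item The limit $k\to\infty$ then needs only $\H^{N-1}$-a.e.\ convergence of $u_k$, Remark~\ref{r:sigmah}, and dominated convergence. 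If $u_k\to u^*$, the limit measure is $-F(x,u^*)\sigma+\Div(\B(x,u))$, which agrees with \eqref{f:bxu} off $J_u$.
\end{enumerate}

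\textbf{A shortcut for the whole theorem.} Take for $(u_k)$ the Comi--Leonardi sequence of Theorem~\ref{l:CL}, which converges $\H^{N-1}$-a.e.\ to $u^\lambda$. The same argument then gives \eqref{f:mubdd} for every $\lambda$ on all of $\Omega$ at once, so $J_u$ need not be treated separately. Then \eqref{f:mubdd2} follows from Proposition~\ref{legame}. The cost is reliance on that approximation theorem, whereas the paper's route needs only the $BV$ chain rule and fine properties of level sets.
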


\begin{proof} 
To prove \eqref{f:mubdd}, we first establish the inequality for the standard nonlinear pairing \eqref{f:bxu}, giving an argument independent of the proof in \cite{CD4}.
We use a regularization method similar to that in \cite[Theorem~3.4]{DCFV2}. 

Let
\(\B_ \varepsilon(\cdot, t) := \rho_ \varepsilon \ast \B_t\)
and define \(\vv_\varepsilon(x) :=\B_ \varepsilon(x,u(x))\).
Since $\B_\varepsilon$ is $C^1$ in $(x,t)$, 
the $BV$ chain rule
(see \cite[Theorem 3.96]{AFP}) yields
\begin{equation}\label{alpha1}
	\begin{split}
		\int_{\Omega}  \nabla\varphi(x) & \cdot\vv_\varepsilon(x)\, dx =
		-\int_{\Omega} \varphi(x) \Div_x \B_ \varepsilon (x,u(x))\, dx
		\\ & -
		\int_{\Omega} \varphi(x)\, \bsmall_\varepsilon(x, \widetilde{u}(x))\, d D^du
\\ & -
			\int_{J_u} \varphi(x) \left(\B_ \varepsilon (x,u^+(x))- \B_ \varepsilon (x,u^-(x))\right)\cdot \nu_u\, 
			d\Haus,
	\end{split}
\end{equation}
for every $\varphi\in C^1_c(\Omega)$.
By Lemma \ref{l:divv},
\(\vv_\varepsilon(x) \to \vv(x)\) for $\mathcal L^N$--a.e.\ $x\in\Omega$, and by \eqref{f:LipB} we have
\[
\|\vv_\varepsilon\|_{L^\infty(\Omega, \R^N)} \leq \|\vv\|_{L^\infty(\Omega, \R^N)} \leq C \|u\|_{L^\infty(\Omega)}.
\]
Hence, by the Dominated Convergence Theorem,
\begin{equation*}%\label{f:alpha2}
	\lim_{ \varepsilon\to 0^+} \int_{\Omega} \nabla\varphi(x)\cdot \vv_ \varepsilon(x)\, dx 
=\int_{\Omega} \nabla\varphi(x)\cdot \vv(x)\, dx \,,
\end{equation*}
which means that
\begin{equation*}%\label{f:sinistra1}
\lim_{ \varepsilon\to 0^+}	\Div [\B_\varepsilon(x,u(x))]\,\mathcal L^N =
	\Div\vv (x)
	\,,\quad     \hbox{in the weak${}^*$ sense of measures. }
\end{equation*}
We now claim that
\begin{equation}\label{f:ie}
	\lim_{\varepsilon\to 0}I_\varepsilon  =
\frac{1}{2}	\int_{\Omega} \varphi(x) 	\left[ F(x, u^+(x)) + F(x, u^-(x)) \right]\, d\sigma(x), 
\end{equation}
where
\[ 
I_\varepsilon   := 
\int_{\Omega} \varphi(x) \Div_x \B_\varepsilon (x,u(x))\,dx,
\]
so that, passing to the limit in \eqref{alpha1}, we obtain
\begin{equation}\label{f:xstim}
	\begin{split}
	\langle\spair{\bsmall(\cdot,u),Du}\varphi\rangle  & = \lim_{ \varepsilon\to 0^+} 
\Big[	\int_{\Omega} \varphi(x)\, \bsmall_\varepsilon(x, \widetilde{u}(x))\, d D^du  \\ &
	+\int_{J_u} \varphi(x) \left(\B_ \varepsilon (x,u^+(x))- \B_ \varepsilon (x,u^-(x))\right)\cdot \nu_u\, 
	d\Haus \Big].
	\end{split}
\end{equation}

To prove \eqref{f:ie},
assume for simplicity that \(u\geq 0\), and choose \(C > \|u\|_\infty\).
By Fubini's Theorem, we have that 
\[
	\int_{\Omega} \varphi(x) \Div_x \B_\varepsilon (x,u(x))\,dx  =
	\int_{\Omega} \varphi(x) \int_0^{u(x)} \Div_x \bsmall_\varepsilon(x,t)\, dt \, dx\,.
\]
Since $0\leq u(x) \leq C$, we can rewrite the inner integral as
\[
\int_0^{u(x)} \Div_x \bsmall_\varepsilon(x,t)\, dt \, dx 
=
\int_0^{C}  \chi_{\{u > t\}}(x) \Div_x \bsmall_\varepsilon(x,t)\, dt\,.
\]
Thus
\[
\int_{\Omega} \varphi(x) \Div_x \B_\varepsilon (x,u(x))\,dx  =
\int_{\Omega} \varphi(x) \int_0^{C}  \chi_{\{u > t\}}(x) \Div_x \bsmall_\varepsilon(x,t)\, dt \, dx\,.
\]
Using the representation
\[
\Div_x \bsmall_\varepsilon(x,t)
=
\int_{\Omega}  \rho_\varepsilon(x-y)\, f(y,t)\, d \sigma(y)
\]
we obtain
\[
\begin{split}
\int_{\Omega} \varphi(x) \Div_x \B_\varepsilon (x,u(x))\,dx  
& =
\int_{\Omega} d\sigma(y)\, \int_0^{C} \,dt\,  f(y,t)
		\int_{\Omega} \varphi(x) \, \rho_\varepsilon(x-y)\, \chi_{\{u > t\}}(x)\, dx
\\ & =
\int_0^{C}	\int_{\Omega}  \rho_\varepsilon\ast (\varphi \chi_{\{u > t\}})(y)  f(y,t) \, d\sigma(y) \,dt\,.
\end{split}
\]
It is well known that for a.e.\ $t\in [0,C]$  the convolutions
\(\rho_\varepsilon\ast (\varphi \chi_{\{u > t\}})\) converge pointwise to
\(\varphi \chi^*_{\{u > t\}}\)
\(\H^{N-1}\)-a.e., and hence \(\sigma\)-a.e., in $\Omega$
(see \cite[Corollary 3.80]{AFP}).
By Lemma~\ref{funzcaracter},
\begin{equation}\label{fusco}
\chi^*_{\{u > t\}}(y)= \frac{1}{2}\left( \chi_{\{u ^+> t\}}(y)+\chi_{\{u ^-> t\}}(y)\right)
\qquad \H^{N-1}-a.e.\ y \in \Omega.
\end{equation}
Passing to the limit as \(\varepsilon\to 0\), and using \eqref{fusco} we obtain
\[
\begin{split}
&	\lim_{\varepsilon\to 0}  	\int_{\Omega} \varphi(x) \Div_x \B_\varepsilon (x,u(x))\,dx   =
	 \int_0^{C}\, dt  \int_{\Omega} \varphi(y) \chi^*_{\{u > t\}}(y)
	\, f(y,t)\, d\sigma(y)
	\\ &
	= 
 \int_{\Omega} \varphi(y) \, d\sigma(y) \int_0^{C}\chi^*_{\{u > t\}}(y)
\, f(y,t)\, dt
	%	\\ & 
	=
\frac{1}{2}	\int_{\Omega} \varphi(x) (F(x, u^+(x)) +F(x, u^-(x)) )\, d\sigma(x),
\end{split}
\]
which completes the proof of~\eqref{f:ie}
and hence \eqref{f:xstim} holds true.

Finally,
let $\varphi \in C^1_c(\Omega)$ with support contained in a compact set $K$, 
let $r_0 > 0$ be such that $K_{r_0} := K+\overline{B_{r_0}} \subset \Omega$ 
and pick $r\in (0,r_0)$ so that $|Du|(\partial K_r)=0$.
Using the bound 
\begin{equation*}
\left|\B_ \varepsilon (x,u^+(x))- \B_ \varepsilon (x,u^-(x))\right| =
\left|\int_{u^-(x)}^{u^+(x)} \bsmall_\varepsilon(x,t)\, dt \right| \leq 
\|\bsmall\|_{\infty} \left| u^+(x)- u^-(x)\right|
\end{equation*}
we obtain the estimate
\begin{equation}\label{f:approx1}
\begin{split}
\Big| & \int_{\Omega} \varphi\,   \bsmall_\varepsilon(x, \widetilde{u})\, d D^du  +  
\int_{J_u} \varphi(x) \left(\B_ \varepsilon (x,u^+)- \B_ \varepsilon (x,u^-)\right)\cdot \nu_u\, 
d\Haus\Big| \\  &
\leq 
\|\varphi\|_{\infty}\|\bsmall\|_{\infty} |D^du|(K_r)+ \|\varphi\|_{\infty}\|\bsmall\|_{\infty} \int_{J_u \cap K_r} \left| u^+(x)- u^-(x)\right| d\Haus \\ &
= \|\varphi\|_{\infty}\|\bsmall\|_{\infty} |Du|(K_r).
\end{split}
\end{equation}
Estimates \eqref{f:xstim} and \eqref{f:approx1} give
\begin{equation*}%\label{f:mubddst}
	|\spair{\bsmall(\cdot,u), Du}|(E)\leq
	\|\bsmall\|_{\infty}
	|Du|(E),
	\qquad
	\text{for every Borel set}\
	E\Subset\Omega\,.
\end{equation*}
This is exactly estimate~\eqref{f:mubdd} for the standard pairing.

Using \eqref{nosalti}, we obtain
\begin{equation}\label{f:laest1}
	|\pair{\bsmall(\cdot,u), Du}|\res(\Omega \setminus J_u)= 	|\spair{\bsmall(\cdot,u), Du}| \res(\Omega \setminus J_u)\leq
	\|\bsmall\|_{\infty}|Du|\res(\Omega \setminus J_u).
\end{equation}
Next, by Proposition \ref{p:tracesonJ}, Proposition \ref{p:lip}, choosing the orientation of $J_u$ such that $\uint=\upiu$
and $\uext=\umeno$, 
we deduce
\begin{equation}\label{f:laest2}
\begin{split}
		\left|\pair{\bsmall(\cdot,u), Du} \right|  \res J_u & \leq 
	\left|\beta^i(x,u^+(x))-\beta^i(x,\Prec{u}(x))\right|  \H^{N-1} \res J_u \\
	& +\left| \beta^e(x,\Prec{u}(x))- \beta^e(x,u^-(x))\right| \H^{N-1} \res J_u \\
	& \leq 	\|\bsmall\|_{L^\infty(K, \R^N)} (|u^+(x)- \Prec{u}(x) | + | \Prec{u}(x)-u^-(x)|) \H^{N-1} \res J_u 
	\\ & = \|\bsmall\|_{L^\infty(K, \R^N)} (u^+(x)-u^-(x)) \H^{N-1} \res J_u  \\
	& =
	\|\bsmall\|_{L^\infty(K, \R^N)}|Du | 	\res J_u.
\end{split}
\end{equation}
Combining \eqref{f:laest1} and \eqref{f:laest2},
we obtain estimate~\eqref{f:mubdd}. 

The proof of \eqref{f:mubdd2} is analogous, using~\eqref{f:laest1} together with~\eqref{f:tronJ666}.
\end{proof}

\begin{remark}\label{r:Theta}
	From \eqref{f:mubdd} and \eqref{f:mubdd2} it follows that  there exist two Borel functions
	$\theta_\lambda(\bsmall, u; \cdot)$ and $\theta^\lambda(\bsmall, u; \cdot) \in L^1(\Omega; |Du|)$
	such that
	\begin{equation*}%\label{f:Theta1}
		\theta_\lambda(\bsmall, u; \cdot)\, |Du|=\pair{\bsmall(\cdot,u), Du} \,, \quad
 		\theta^\lambda(\bsmall, u; \cdot)\, |Du|=\Pair{\lambda}{\bsmall(\cdot,u), Du}
		\qquad \text{$|Du|$--a.e.\ in $\Omega$.}
	\end{equation*}
By Proposition \ref{p:tracesonJ}, we already know that
\begin{gather*}
\theta_\lambda(\bsmall, u; x)  = \mean{u^-(x)}^{u^+(x)}
\Trl{\bsmall_t}{J_u}(x)\,dt,  
\qquad |D^ju|-\text{a.e. in}\ \Omega, \\
\theta^\lambda(\bsmall, u; x)  =
\mean{u^-(x)}^{u^+(x)}[(1-\lambda)\gamma^i(x,t)+\lambda\gamma^e(x,t)]\,dt, \qquad |D^ju|-\text{a.e. in}\ \Omega. 
\end{gather*}
Moreover, by \eqref{nosalti},
the two pairing measures $\Pair{\lambda}{\bsmall(\cdot,u), Du}$ and $(\bsmall(\cdot,u), Du)_\lambda$
have the same diffuse part, independent of $\lambda$, that we simply denote by $(\bsmall(\cdot,u), Du)^d$:
\[
(\bsmall(\cdot,u), Du)^d := \theta_\lambda(\bsmall, u; \cdot)\, 
|D^du|=\theta^\lambda(\bsmall, u; \cdot) \, |D^du|, \quad \lambda\in\spacelambda.
\]
Hence, also the corresponding densities coincide $|D^d u|$-a.e.\ in $\Omega$ and are independent of $\lambda$; 
we denote by \(\theta(\bsmall, u; \cdot) \in L^1(\Omega; |D^du|)\) their common value:
\begin{equation}\label{f:Theta}
\theta(\bsmall, u; \cdot) := \theta^\lambda(\bsmall, u; \cdot) = \theta_\lambda(\bsmall, u; \cdot),
\qquad  \text{$|D^du|$--a.e.\ in $\Omega$.}
\end{equation}
In \cite[Theorem 4.6]{CD4}, a representation is provided of the density of the standard nonlinear pairing:
\begin{equation}
\label{f:acp}
\theta(\bsmall, u; \cdot) \, |D^a u|=\Pair{*}{\bsmall(\cdot,u), Du}^a
=\left(\bsmall(\cdot,\widetilde u)\cdot \nabla u\right)\LLN,
\end{equation}
and hence it remains to represent the Cantor part $\theta(\bsmall, u; \cdot) \, |Du^c|$ of the pairing. 
This requires some additional effort, and it is proved in Theorem \ref{t:repr} below.
\end{remark}

We conclude this section showing that, in fact, the external $\lambda$--pairings can be identified with a subclass of the nonlinear pairings.
We start by proving a simple selection lemma, that will be useful also in the next sections.

\begin{lemma}
\label{l:meas}
Let $A\subset\R^N$ be a Borel set, and let $\psi\colon A\times [0,1]\to\R$
be a Carathéodory function, that is
\begin{itemize}
\item[(i)]
for every $s\in [0,1]$, the map $x\mapsto \psi(x,s)$ is Borel-measurable;
\item[(ii)]
for every $x\in A$, the map $s\mapsto \psi(x,s)$ is continuous.
\end{itemize}
Moreover, assume that, for every $x\in A$, the section
$\Gamma_x := \{s\in [0,1]\colon \psi(x,s) = 0\}$
is not empty.

Then, there exists a Borel-measurable function $\lambda\colon A\to [0,1]$ such that
$\psi(x, \lambda(x)) = 0$ for every $x\in A$.
\end{lemma}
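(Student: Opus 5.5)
The plan is to choose explicitly the \emph{smallest} zero of $\psi(x,\cdot)$, namely
\[
\lambda(x) := \min \Gamma_x = \min\{s\in[0,1]\colon \psi(x,s)=0\},
\qquad x\in A .
\]
By (ii) each section $\Gamma_x$ is a closed subset of $[0,1]$, and by assumption it is nonempty. Hence it is compact and the minimum exists. The identity $\psi(x,\lambda(x))=0$ then holds by construction, and everything reduces to proving that $\lambda$ is Borel. For this it suffices to show that, for every $a\in[0,1)$, the sublevel set $\{x\in A\colon \lambda(x)\le a\}$ is Borel.

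For fixed $a$ we have
\[
\lambda(x)\le a \iff \Gamma_x\cap[0,a]\neq\emptyset \iff \min_{s\in[0,a]}|\psi(x,s)| = 0 .
\]
The first equivalence holds because $\Gamma_x$ is closed, and the second because $s\mapsto|\psi(x,s)|$ is continuous on the compact interval $[0,a]$. Using continuity in $s$ once more, the minimum over $[0,a]$ equals the infimum over the countable dense set $Q_a:=([0,a]\cap\mathbb{Q})\cup\{a\}$. Therefore
\[
\{\lambda\le a\}=\Bigl\{x\in A\colon \inf_{s\in Q_a}|\psi(x,s)|=0\Bigr\}.
\]
By (i), each $x\mapsto|\psi(x,s)|$ is Borel. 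A countable infimum of Borel functions is Borel, so this set is Borel. Since also $\{\lambda\le 1\}=A$, the function $\lambda$ is Borel measurable, with values in $[0,1]$.

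The only point that needs care is the passage from the uncountable family of conditions indexed by $s\in[0,a]$ to a countable one. This is exactly where the Carathéodory structure is used, namely continuity in $s$ together with Borel measurability in $x$. It is also why I take the minimum of $\Gamma_x$ rather than an arbitrary selection: the minimum makes the sublevel sets expressible through countably many Borel conditions. As a remark, one could instead appeal to the Kuratowski--Ryll-Nardzewski selection theorem applied to the closed-valued, measurable multifunction $x\mapsto\Gamma_x$. The direct construction above avoids that machinery.
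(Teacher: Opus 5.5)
Your proof is correct, but it takes a genuinely different route from the paper. The paper argues abstractly in two steps. First it uses Bogachev's Lemma~6.4.6 to get joint Borel measurability of the Carath\'eodory function $\psi$ on $A\times[0,1]$, so that the zero set $\Gamma$ is a Borel subset of the product whose sections $\Gamma_x$ are nonempty and compact. It then applies a uniformization theorem for Borel sets with compact sections (Bogachev's Theorem~6.9.6) to obtain some Borel $\lambda$ whose graph lies in $\Gamma$.

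You instead fix a specific selection, the smallest zero $\min\Gamma_x$, and check its measurability by hand. You reduce each sublevel set $\{\lambda\le a\}$ to the condition $\inf_{s\in Q_a}|\psi(x,s)|=0$ over a countable set. This uses only the sectionwise hypotheses (i)--(ii), never needs joint measurability of $\psi$, and is fully self-contained. It also yields an explicit selection, and the same argument gives the largest zero.

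The price is generality. Your construction relies on the order structure of $[0,1]$, whereas the cited uniformization theorem applies to any Borel set with nonempty compact sections, without needing it to be the zero set of a Carath\'eodory function on an interval.

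A minor wording point: the equivalence $\lambda(x)\le a\iff\Gamma_x\cap[0,a]\neq\emptyset$ uses only that the minimum exists. Closedness of $\Gamma_x$ is what guarantees that existence, not what drives the equivalence itself.
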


\begin{proof}
By \cite[Lemma~6.4.6]{Boga}, the function $\psi$ is Borel-measurable in the
product space $A\times [0,1]$,
hence its zero-level set
\[
\Gamma := \{ (x,s)\in A\times [0,1]\colon
\psi(x,s) = 0\}
\]
is a Borel subset of $A\times [0,1]$.
For every $x\in A$, 
by assumption the section $\Gamma_x$ is not empty,
and, by the continuity of the function $s\mapsto \psi(x,s)$,
it is compact.

Hence, by \cite[Theorem~6.9.6]{Boga},
$\Gamma$ contains the graph of some 
Borel-measurable function $\lambda\colon A\to [0,1]$.
\end{proof}

\begin{proposition}\label{legame}
Let $u\in\BV\cap L^\infty(\Omega)$ be given.
Then, for every 
$\Lambda\in\spacelambda$ there exists $\lambda\in\spacelambda$,
depending on $u$ and $\Lambda$,
such that
\begin{equation}\label{laLa1}
\Pair{\Lambda}{\bsmall(\cdot,u), Du}=(\bsmall(\cdot,u), Du)_\lambda.
\end{equation}
Conversely, for every 
$\lambda\in\spacelambda$, there exists 
$\widetilde{\Lambda}\in\spacelambda$, 
depending on $u$ and $\lambda$,
such that
\begin{equation*}%\label{laLa2}
(\bsmall(\cdot,u), Du)_\lambda=\Pair{\widetilde\Lambda}{\bsmall(\cdot,u), Du}+R(u) \,\sigma\res J_u\,,
\end{equation*}
where
$R(u) (x) = 0$ if $F(x,u^{\lambda(x)})$ lies between $F(x, u^-(x))$ and $F(x, u^+(x))$
and, in general,
\[
|R(u)| \leq u^+ - u^-
\qquad\text{$\sigma$-a.e.\ on $J_u$} \,.
\]
\end{proposition}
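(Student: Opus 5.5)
Both pairings share the term $\Div(\B(x,u))$, so the claims reduce to comparing $F(x,\Prec{u})\,\sigma$ with $\left((1-\Lambda)F(x,u^-)+\Lambda F(x,u^+)\right)\sigma$. By \eqref{nosalti} these agree on $\Omega\setminus J_u$ for every choice of $\lambda,\Lambda$. So we only need to choose the new function on $J_u$, and outside $J_u$ we set it equal to (say) $1/2$. We work $\sigma$-a.e.: by Lemma~\ref{l:B} there is a $\sigma$-null set outside of which $t\mapsto F(x,t)$ is $1$-Lipschitz, and on this null set we define everything arbitrarily (e.g.\ $1/2$). Since $\sigma\in\radonh$ and $\H^{N-1}(S_u\setminus J_u)=0$, this does not affect the measures. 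We also fix Borel representatives of $u^\pm$ and $F$ so that the functions below are Borel.

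\emph{First claim.} On $A:=J_u\setminus Z$, with $Z$ the $\sigma$-null exceptional set, consider
\[
\psi(x,s):=F\bigl(x,(1-s)u^-(x)+s u^+(x)\bigr)-(1-\Lambda(x))F(x,u^-(x))-\Lambda(x)F(x,u^+(x)).
\]
For fixed $x$, $\psi(x,\cdot)$ is continuous because $F(x,\cdot)$ is Lipschitz. For fixed $s$, $\psi(\cdot,s)$ is Borel, being the composition of the Borel function $F$ (jointly Borel as a Carathéodory function, via \cite[Lemma~6.4.6]{Boga}) with Borel maps. We have $\psi(x,0)=\Lambda(x)\bigl(F(x,u^-)-F(x,u^+)\bigr)$ and $\psi(x,1)=(1-\Lambda(x))\bigl(F(x,u^+)-F(x,u^-)\bigr)$, which have opposite signs (or one vanishes). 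By the intermediate value theorem the section $\Gamma_x$ is nonempty. Lemma~\ref{l:meas} then provides a Borel $\lambda$ on $A$ with $F(x,\Prec{u})=(1-\Lambda)F(x,u^-)+\Lambda F(x,u^+)$, and \eqref{laLa1} follows.

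\emph{Converse.} Given $\lambda$, let $m(x)$ and $M(x)$ be the minimum and maximum of $F(x,u^\pm(x))$, and let $P(x)$ be the projection of $F(x,\Prec{u}(x))$ onto $[m(x),M(x)]$; this is a Borel function. We define $R(u):=F(x,\Prec{u})-P$, which vanishes exactly when $F(x,\Prec{u})$ lies between the two values. As before, Lemma~\ref{l:meas} applied to $\psi(x,s)=(1-s)F(x,u^-)+sF(x,u^+)-P(x)$, which is affine in $s$ and has a zero in $[0,1]$, gives a Borel $\widetilde\Lambda$ with $(1-\widetilde\Lambda)F(x,u^-)+\widetilde\Lambda F(x,u^+)=P$. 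Alternatively one can write an explicit formula when $F(x,u^+)\ne F(x,u^-)$ and take $1/2$ otherwise. Substituting into the definitions gives the stated identity with the sign convention for $R(u)$; we flip the sign of $R$ if needed so that it matches the statement.

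\emph{Bound on $R(u)$.} If $F(x,\Prec u)$ exceeds $M$, then $|R|=F(x,\Prec u)-M\le F(x,\Prec u)-F(x,u^+)\le |\Prec u-u^+|\le u^+-u^-$ by \eqref{f:estiv}. The case below $m$ is symmetric.

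\emph{Main obstacle.} The delicate step is measurability, namely checking the Carathéodory hypotheses of Lemma~\ref{l:meas} for compositions such as $x\mapsto F(x,u^\lambda(x))$. This requires the joint Borel measurability of $F$, which holds after redefinition on a $\sigma$-null set, together with the fact that all exceptional sets are $\sigma$-negligible.
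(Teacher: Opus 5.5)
Your proposal is correct and follows the paper's proof. The first part uses the same Carath\'eodory function $\psi$, the intermediate value theorem and Lemma~\ref{l:meas}. In the converse, your projection $P$ of $F(x,u^\lambda)$ onto $[m,M]$ is exactly what the paper gets by clamping the ratio $\Lambda$ to $[0,1]$, with the same $1$-Lipschitz bound on $R(u)$; note that the sign in the statement forces $R(u)=P-F(x,u^\lambda)$. Your explicit removal of the $\sigma$-null set where $F(x,\cdot)$ is not Lipschitz, and the resulting joint Borel measurability, is slightly more careful than the paper, which applies Lemma~\ref{l:meas} on all of $\Omega\setminus(S_u\setminus J_u)$ without comment.
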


\begin{proof}
Let $\Lambda\in\spacelambda$, 
let $A := \Omega\setminus (S_u\setminus J_u)$
and let
$\psi\colon A\times [0,1]\to\R$ be the function defined by
\[
\psi(x,s) :=
F(x, (1-s) u^-(x) + s u^+(x)) - (1-\Lambda(x)) F(x,u^-(x)) - \Lambda(x) F(x, u^+(x))\,.
\]
The function $\psi$ satisfies the assumptions (i)--(ii) of Lemma~\ref{l:meas}.
Moreover, by (ii) and the intermediate value theorem,
for every $x\in A$ the section $\Gamma_x$ is a non-empty subset of $[0,1]$.
Hence, by Lemma~\ref{l:meas}, there exists
$\lambda \in\spacelambda[A]$, 
that can be extended to a function $\lambda\in\spacelambda$,
so that
\[
\Lambda(x) F(x,\upiu(x))+(1-\Lambda(x))F(x,\umeno(x))=F(x,u^{\lambda(x)})
\qquad \forall x\in\Omega',
\]
and \eqref{laLa1} follows.

To prove the second part of the theorem,
it is enough to define
\[
\Lambda(x) := 
\begin{cases}
\frac{F(x,u^\lambda(x)) - F(x,u^-(x))}{F(x,u^+(x)) - F(x,u^-(x))}
& \text{if}\ F(x,u^-(x)) \neq F(x,u^+(x)),
\\
0
&\text{otherwise}
\end{cases}
\]
and $\widetilde{\Lambda} (x):= \Lambda(x) \wedge 1 \vee 0$.
(The choice $\Lambda(x) = 0$ when $F(x,u^-(x)) = F(x,u^+(x))$ is arbitrary and can be
replaced by any other Borel-measurable choice in $[0,1]$, since its value in this region
does not affect the pairing measure.)
Clearly, if $F(x,u^\lambda(x))$ lies between $F(x,u^-(x))$ and $F(x,u^+(x))$, then $\Lambda(x) \in [0,1]$ and $R(u)(x) = 0$.
Otherwise, it is enough to observe that $F$ is $1$-Lipschitz with respect to $u$, so that
$|R(u)(x)| \leq u^+(x) - u^-(x)$.
Specifically, consider for example a point $x\in J_u$ where
$F(x, u^-(x)) < F(x, u^+(x)) < F(x, \u^\lambda(x))$, so that
$\Lambda(x) > 1$ and $\widetilde{\Lambda}(x) = 1$. Then
\[
|R(u)(x)| = |F(x, \u^\lambda(x)) - F(x, u^+(x))|\leq
|u^\lambda(x) - u^+(x)| \leq u^+(x) - u^-(x)\,.
\]
Similarly, we can handle the other cases.
\end{proof}

\begin{remark}
The linear $\lambda$--pairing, introduced in \cite{CDM}, is recovered for $\bsmall(x,t)=\A(x)$, and
\[
(\A, Du)_\lambda=[\A, Du]_\lambda
\]
in $\Omega$, with the same $\lambda$. 
\end{remark}

\begin{remark}\label{r:leg}
If $t\mapsto F(x,t)$ is a monotone function on $\R$ for $\sigma$--a.e. $x\in \Omega$, 
then $R(u) \equiv 0$,
and hence Proposition \ref{legame}  provides a canonical identification between all external pairings and nonlinear pairings.

In particular, this is the case 
for the separated vector fields discussed in Example \ref{ex:separated}
(or even for the separable vector fields of Example \ref{ex:splitted}),
with $\bsmall(x,t)=g(t)\A(x)$, $g\in C_b(\R)$, $g\geq 0$, and $\A\in\DM(\Omega)$, for which
\[
(g(u)\A(x), Du)_\lambda=[g(u)\A(x), Du]_\Lambda
\] 
in $\Omega$, where the Borel functions $\lambda$ and $\Lambda$ are related by 
\[
G(u^\lambda(x))=\Lambda(x) G(u^+(x))+(1-\Lambda(x))G(u^-(x)),  \quad \text{$\sigma$--a.e.}\  x\in \Omega.
\]
A nonlinear pairing for separated fields $\B(x,t)=G(t) \A(x)$ can be defined exploiting the  Leibniz formula for linear 
$\lambda$--pairings (see \cite[Proposition 6.2]{CDM}): for  \(v\in\BVL[\Omega]\), and 
choosing on $J_u$ the orientation
such that  $\upiu=\uint$,
it holds that
\begin{gather*}
	\pair{v\A, Du}^d = \Prec{v}\pair{\A, Du}^d = v^* \spair{\A, Du}^d,
	%\label{f:gvADud}
	\\
	\pair{v\A, Du}^j 
	=  [(1-\lambda)\Trp{\A}{J_u}\vint+\lambda\Trm{\A}{J_u}\vext]\,
	(u^+ - u^-) \, \hh\res 
	J_u\,.
	%\label{f:gvADuj}
\end{gather*}
For $v=g(u)$ we get
\begin{gather*}
	\pair{g(u)\A, Du}^d =  g(u)^* \spair{\A, Du}^d,
	%\label{f:leib1},
	\\
	\pair{g(u)\A, Du}^j 
	=  [(1-\lambda)\Trp{\A}{J_u}g(u)^i+\lambda\Trm{\A}{J_u}g(u)^e]\,
	(u^+ - u^-) \, \hh\res 
	J_u\,.
	%\label{leib2}
\end{gather*}
We observe that, in general,
\(
\pair{g(u)\A, Du} \neq \pair{\A, DG(u)}.
\)
Specifically, from \eqref{f:tronJ} and observing that $J_{G(u)} \subseteq J_u$,
it holds that
\begin{gather*}
\pair{g(u)\A, Du}\res J_u = [G(u^\lambda) - G(u^-)] \Trm{\A}{J_u}
+  [G(u^+) - G(u^\lambda)] \Trp{\A}{J_u},
\\
\pair{\A, DG(u)}\res J_u = [(G(u))^\lambda - (G(u))^-] \Trm{\A}{J_u}
+  [(G(u))^+ - (G(u))^\lambda] \Trp{\A}{J_u},
\end{gather*}
but, generally,
$G(u^\lambda) \neq (G(u))^\lambda$ unless $g$ is constant. On the other hand, one can easily check that
\(
\Pair{\lambda}{g(u)\A, Du} = \Pair{\lambda}{\A, DG(u)}
\)
for every $\lambda \in\spacelambda$.
\end{remark}

\section{Coarea formula and slicing}
\label{s:coarea}

This section is devoted to the proof of the coarea formula
for the external nonlinear $\lambda$-pairing $\Pair{\lambda}{\bsmall(\cdot,u), Du}$,
and a related slicing result for the density $\theta^\lambda$ of its diffuse part.

We recall the notation $\bsmall_t(x):=\bsmall(x,t)$, so that 
$(\bsmall_t, Du)_\lambda=[\bsmall_t, Du]_\lambda$ is the linear $\lambda$--pairing between the field
$\bsmall_t \in \DM(\Omega)$ and the function $u\in\BVL[\Omega]$.

\begin{theorem}[Coarea formula]\label{t:coarea}
Let $\bsmall$ satisfy assumptions $(b1)-(b4)$. For $u\in \BV\cap L^\infty(\Omega)$, and
$\lambda \in\spacelambda$, 
it holds that
	\begin{equation}\label{f:gsplit}
		\pscal{\Pair{\lambda}{\bsmall(\cdot,u), Du}}{\varphi} = 
		\int_{\R} \pscal{\Pair{\lambda}{\bsmall_t, D\chiut}}{\varphi}\, dt,
\qquad \forall \varphi\in C_c^1(\Omega).
	\end{equation}
\end{theorem}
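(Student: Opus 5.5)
The plan is to compute the right-hand side of \eqref{f:gsplit} directly from the definitions and to recognize the left-hand side, using Fubini's theorem in the $(x,t)$ variables. Let $C>\|u\|_\infty$. For $t\ge C$ we have $\chiut=0$. For $t\le -C$ we have $\chiut=1$ a.e., and then $\Pair{\lambda}{\bsmall_t, D\chiut}=\Div\bsmall_t-\Div\bsmall_t=0$. So the $t$-integral runs only over $(-C,C)$.

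By the classical coarea formula in $BV$, $\chiut\in BV(\Omega)$ for a.e.\ $t$. Theorem~\ref{chainb4}, applied in the linear case, gives
\[
|\langle \Pair{\lambda}{\bsmall_t, D\chiut},\varphi\rangle|\le \|\varphi\|_\infty\|\bsmall\|_\infty |D\chiut|(\Omega),
\]
and the right-hand side is integrable in $t$. Hence the right-hand side of \eqref{f:gsplit} is well defined once measurability in $t$ is checked; this follows from the explicit formula below.

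For a.e.\ $t$, the linear external pairing is
\[
\Pair{\lambda}{\bsmall_t, D\chiut}=\Div(\chiut\bsmall_t)-\big((1-\lambda)\chiut[-]+\lambda\chiut[+]\big) f(\cdot,t)\,\sigma .
\]
Here we use $\Div\bsmall_t=f(\cdot,t)\sigma$, and the fact that $(\chiut)^\pm$ are defined $\Haus$-a.e., hence $\sigma$-a.e. by Remark~\ref{r:sigmah}. For $t<0$ it is convenient to replace $\chiut$ by $\chiut-1$. This does not change the pairing, because the constant $1$ contributes $\Div\bsmall_t-\Div\bsmall_t=0$. The replacement makes the formulas below symmetric with $\B(x,t)=\int_0^t\bsmall(x,s)\,ds$.

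\emph{Divergence term.} Integrating in $t$ and using Fubini,
\[
-\int_{-C}^{C}\!\!\int_\Omega(\chiut-\chi_{\{t<0\}})\,\bsmall_t\cdot\nabla\varphi\,dx\,dt=-\int_\Omega\B(x,u(x))\cdot\nabla\varphi\,dx,
\]
since $\int_{-C}^{C}(\chi_{\{u>t\}}-\chi_{\{t<0\}})\bsmall(x,t)\,dt=\int_0^{u(x)}\bsmall(x,t)\,dt$. This is exactly $\langle\Div(\B(x,u)),\varphi\rangle$.

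\emph{Measure term.} By Lemma~\ref{funzcaracter}, for a.e.\ $t$ we have $(\chiut)^\pm(x)=\chi_{\{u^\pm>t\}}(x)$ for all $x$ outside an $\Haus$-null set $N_t$, which is therefore $\sigma$-null. Granting the exchange of integrals (see the last paragraph),
\[
\int_{-C}^{C}\!\!\int_\Omega\varphi\big[(1-\lambda)(\chi_{\{u^->t\}}-\chi_{\{t<0\}})+\lambda(\chi_{\{u^+>t\}}-\chi_{\{t<0\}})\big]f(x,t)\,d\sigma\,dt
\]
equals $\int_\Omega\varphi\,[(1-\lambda)F(x,u^-)+\lambda F(x,u^+)]\,d\sigma$, using $F(x,s)=\int_0^s f(x,t)\,dt$ from Lemma~\ref{l:B}. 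Adding the two pieces gives exactly $\langle\Pair{\lambda}{\bsmall(\cdot,u),Du},\varphi\rangle$ by Definition~\ref{d:expair}.

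The main obstacle is the justification of Fubini in the measure term. The exceptional sets $N_t$ depend on $t$, and $f$ is a priori only Borel in $x$ for each fixed $t$. I would handle joint measurability by fixing a good version of $f$ on $\Omega\times\R$. Since $F(x,\cdot)$ is $1$-Lipschitz for $\sigma$-a.e.\ $x$ by \eqref{f:estiv}, one can define $f(x,t):=\limsup_h (F(x,t+h)-F(x,t))/h$ along rational $h\to0$, with $F$ Carathéodory. Similarly, $(x,t)\mapsto(\chiut)^\pm(x)$ is Borel, being defined by approximate limits of $\chi_{\{u>t\}}$ computed through countably many balls. The set of pairs $(x,t)$ where $(\chiut)^\pm(x)\ne\chi_{\{u^\pm>t\}}(x)$ is therefore Borel in $\Omega\times\R$. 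Each of its $t$-sections is $\sigma$-null for a.e.\ $t$, so by Fubini it is $\sigma\otimes\mathcal L^1$-null. Since the integrands are bounded and the domain is bounded in $t$, the exchange of integrals is legitimate.
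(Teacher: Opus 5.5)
Your proposal is correct and follows the paper's route. You write the linear external pairing as the divergence term minus $\chi^\lambda_{\{u>t\}}f(\cdot,t)\,\sigma$, convert $\chi^\lambda_{\{u>t\}}$ into $(1-\lambda)\chi_{\{u^->t\}}+\lambda\chi_{\{u^+>t\}}$ via Lemma~\ref{funzcaracter}, and integrate in $t$ with Fubini using $F(x,s)=\int_0^s f(x,t)\,dt$. Two things differ from the paper. You treat negative levels by subtracting $\chi_{\{t<0\}}$ instead of reducing to $u\ge 0$. Your joint-measurability argument for the Fubini exchange spells out a point the paper leaves implicit.
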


\begin{proof} %[Proof of Theorem \ref{t:coarea}]
By considering the level sets $\{u>t\}$ separately for
$t\geq 0$ and $t\leq 0$,
we may assume without loss of generality that \(u\geq 0\). 
Fixed $\varphi\in C_c^1(\Omega)$, we know that the distributional divergence of $\varphi \bsmall_t$ is a finite Radon measure in $\Omega$, and 
$$
\Div (\varphi\bsmall_t)=\varphi f(\cdot,t) \sigma+\bsmall_t \cdot\nabla\varphi\, \mathcal{L}^N.
$$
 Moreover $\chiut\in BV(\Omega)$ for $\LLU$-a.e.\ $t\in\R$, and 
hence for every $\lambda \in\spacelambda$ its precise representative $ \chi_{\{u > t\}}^{\lambda}$ belongs to
$L^1(\Omega; |\Div(\varphi \bsmall_t)|)$, and

$$
\pscal{ \Pair{\lambda}{\bsmall_t, D\chiut}}{\varphi}= -\int_\Omega \chi_{\{u > t\}}^{\lambda} \, d \Div (\varphi\bsmall_t).
$$	
By  Fubini's Theorem and recalling that, by Lemma \ref{funzcaracter}, for a.e. $t\in\R$,
\begin{equation}
\label{f:chil}
	\chi^\lambda_{\{u > t\}}= \lambda\chi_{\{u ^+> t\}}+(1-\lambda)\chi_{\{u ^-> t\}}
	\qquad \H^{N-1}-\text{a.e.\ in}\ \Omega,
\end{equation}
we get
\begin{align*}
	\int_{0}^{+\infty} & \pscal{ \Pair{\lambda}{\bsmall_t, D\chiut}}{\varphi}\, dt   =
	-\int_{0}^{+\infty} \left( \int_{\Omega} \chi_{\{u > t\}}^{\lambda} \, d \Div (\varphi  \bsmall_t)\right) \, dt	\\
	= {} & 
	- \int_{\Omega} \int_{0}^{+ \infty} (1 - \lambda) \chi_{\{u^- > t\}} \varphi f(x,t) \, dt\, d\sigma
	- \int_{\Omega} \int_{0}^{+ \infty} (1 - \lambda) \chi_{\{u^- > t\}} \bsmall_t \cdot\nabla\varphi\, dt \, dx\,  
\\
	&  - \int_{\Omega} \int_{0}^{+ \infty} \lambda \chi_{\{u^+ > t\}} \,  \varphi f(x,t)\,  dt \, \,d\sigma 
	- \int_{\Omega} \int_{0}^{+ \infty} \lambda \chi_{\{u^+ > t\}}  \,\bsmall_t \cdot\nabla\varphi\, dt\,dx \\
	= {} &
	 - \int_{\Omega} (1 - \lambda)  \varphi F(x, u^-)+ \lambda \varphi F(x, u^+) \,d\sigma 
	- \int_{\Omega}  \B (x,u)\cdot\nabla\varphi\,dx \\
	= {} & 
    \int_\Omega \varphi \, d \Pair{\lambda}{\bsmall(x,u), Du}.
\qedhere
\end{align*}	
\end{proof}

\begin{remark}\label{r:nocoaera}
By inspection of the proof, the validity of the coarea formula for the external pairing
depends on the linearity of $\chi^\lambda_{\{u > t\}}$ with respect to $\lambda$
(see \eqref{f:chil}).
It is then apparent that the coarea formula does not in general hold for the nonlinear pairing,
due to its intrinsic nonlinearity with respect to $\lambda$ on the jump set.
\end{remark}

\begin{remark}
	Localizing \eqref{f:gsplit} in $\Omega \setminus J_u$, and recalling that 
	$\pair{\bsmall(\cdot,u), Du}^d$ and $\Pair{\lambda}{\bsmall(\cdot,u), Du}^d$ coincide and do not depend on $\lambda$ in $\Omega \setminus J_u$,
	we obtain that
	\begin{equation}\label{f:gsplitnosalti}
		\begin{split}
		\int_{\Omega}\varphi\, &  d\pair{\bsmall(\cdot,u), Du}^d   = \int_{\Omega}\varphi\,  d\Pair{\lambda}{\bsmall(\cdot,u), Du}^d
		%\int_{\Omega\setminus J_u}\varphi\,  d(\bsmall(x,u), Du) =	
		\\ &
		=\int_{\R}\left(\int_{\Omega\setminus J_u}\varphi\, d \Pair{\lambda}{\bsmall_t, D\chiut}\right)\,dt
		\\& =
		-\int_{\R} \int_{\Omega\setminus J_u}  \chi_{\{\ut > t\}} \,\varphi\, f(x,t)\, d\sigma\, dt 
		- \int_{\R} \int_{\Omega\setminus J_u}   \chi_{\{\ut > t\}} \,\bsmall_t \cdot \nabla\varphi\, dx\, dt.
		\end{split}
	\end{equation}
\end{remark}

In order to provide a representation of the Cantor part of the $\lambda$--pairings,
we need some preliminary results.
We start by showing that the standard external pairing can be approximated by external pairings
with smooth vector fields.

\begin{theorem}[Approximation by $C^\infty$ fields] 
\label{t:convolu}
Let $\bsmall$
satisfy assumptions $(b1)-(b4)$,
and let $M>0$ be a given constant.
Then there exists a sequence of vector fields $\bsmall^k\colon\Omega\times\R\to\R^N$
satisfying, for every $k\in\N$:
\begin{itemize}
\item[(c1)] $\bsmall^k\in C_b(\Omega\times\R, \R^N)$,
with $\|\bsmall^k\|_\infty \leq \|\bsmall\|_\infty$.
\item[(c2)] For every $t\in\R$, the map $x\mapsto \bsmall^k(x,t)$
belongs to $C^\infty(\Omega, \R^N)$, and
\[
\int_\Omega |\Div_x\bsmall^k_t(x)|\, dx
\leq |\Div_x\bsmall_t|(\Omega) + \frac{1}{k}\,,
\qquad \forall t\in [-M,M].
\]
\item[(c3)]
The least upper bound
%$|\Div_x \bsmall^k(\cdot,t)|$,	
\[\sigma^k_M := \bigvee_{t\in [-M,M]} |\Div_x \bsmall^k_t|\Leb{N}\]
is a Radon measure,
with $\sigma^k_M(\Omega) \leq \sigma(\Omega) + \frac{1}{k}$.
\item[(c4)] For every $u\in\BV\cap L^\infty(\Omega)$,
with $\|u\|_\infty \leq M$, it holds that
\[
\Pair{*}{\bsmall^k(\cdot, u), Du} \stackrel{*}{\rightharpoonup}
\Pair{*}{\bsmall^{\phantom{k}}(\cdot, u), Du},
%\qquad \forall u\in\BV\cap L^\infty(\Omega),
\]
locally in the weak${}^*$ sense of measures in $\Omega$.
\end{itemize}
\end{theorem}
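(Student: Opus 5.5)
The plan is to take $\bsmall^k := \bsmall_{\varepsilon_k} = \rho_{\varepsilon_k}\ast \bsmall_t$ (mollification in $x$ only, with $\bsmall$ extended by zero outside $\Omega$), where $\varepsilon_k\to 0$ is chosen along the way. The regularization argument in the proof of Theorem~\ref{chainb4} then gives (c4) almost directly.

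\emph{Properties (c1)--(c3).} Property (c1) follows from (b1)--(b2). Indeed, $\|\rho_\varepsilon\ast \bsmall_t\|_\infty\le \|\bsmall\|_\infty$. Joint continuity in $(x,t)$ follows by dominated convergence: $\bsmall(y,\cdot)$ is continuous for a.e.\ $y$ and $\bsmall$ is bounded. Smoothness in $x$ is automatic.

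For (c2) I would use $\Div_x \bsmall^k_t = \rho_{\varepsilon}\ast \Div_x\bsmall_t$ on $\{x:\dist(x,\partial\Omega)>\varepsilon\}$. Near $\partial\Omega$ the zero extension creates a boundary contribution, so the bound cannot hold on the whole of $\Omega$ as stated. I would handle this in one of two ways. The first option is a locally finite partition of unity $\{\zeta_j\}$ with radii $\varepsilon_j$ adapted to the distance from $\partial\Omega$, as in the Anzellotti--Giaquinta / Meyers--Serrin construction. The second option, which is simpler, is to interpret the statement on $\Omega_\varepsilon\Subset\Omega$, i.e.\ locally, which suffices for (c4). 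With the partition of unity, the commutator terms $\sum_j \bsmall_t\cdot\nabla\zeta_j$ and their mollifications produce an error of size at most $1/k$, uniformly in $t\in[-M,M]$. The uniformity comes from (b1): the commutator is controlled by $\|\bsmall\|_\infty\sum_j\|\rho_{\varepsilon_j}\ast(\bsmall_t\cdot\nabla\zeta_j)-\bsmall_t\cdot\nabla\zeta_j\|_{L^1}$. I expect this step to be the main obstacle, because $L^1$ continuity of mollification must be uniform in $t$. I would get that uniformity from the equicontinuity of translations of the bounded family $\{\bsmall_t\}$ on compact sets. Translation continuity in $L^1$ is uniform in $t\in[-M,M]$ because $t\mapsto\bsmall_t$ is continuous into $L^1_{\rm loc}$ by (b2) and dominated convergence, so the family is compact in $L^1_{\rm loc}$.

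For (c3), since $|\rho_\varepsilon\ast\mu|\le \rho_\varepsilon\ast|\mu|$, we have $|\Div_x\bsmall^k_t|\,\Leb{N}\le \rho_\varepsilon\ast\sigma$ (plus the commutator error) for every $t$. Hence $\sigma^k_M\le\rho_\varepsilon\ast\sigma + \text{error}$, which gives the bound $\sigma(\Omega)+1/k$.

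\emph{Property (c4).} I would follow the proof of Theorem~\ref{chainb4}. Formula \eqref{f:xstim}, with $\bsmall^k$ in place of $\bsmall_\varepsilon$, shows that $\langle\Pair{*}{\bsmall(\cdot,u),Du},\varphi\rangle$ is the limit of
\[
\int_\Omega\varphi\,\bsmall^k(x,\widetilde u)\,dD^du+\int_{J_u}\varphi\,(\B^k(x,u^+)-\B^k(x,u^-))\cdot\nu_u\,d\Haus .
\]
Since $\bsmall^k$ is continuous and smooth in $x$, the classical $BV$ chain rule (\cite[Theorem 3.96]{AFP}) identifies this expression with $\Div(\B^k(x,u))-\Div_x\B^k(x,u)$. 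For continuous $\Div_x\B^k$ this is exactly $\Pair{*}{\bsmall^k(\cdot,u),Du}$: in the smooth case the second measure is absolutely continuous with respect to $\Leb{N}$, so no representative is needed. This gives convergence against every $\varphi\in C^1_c(\Omega)$. Uniform local total-variation bounds $\le\|\bsmall\|_\infty|Du|$, from Theorem~\ref{chainb4} applied to $\bsmall^k$, then upgrade this to weak$^*$ convergence against all $\varphi\in C_c(\Omega)$ by density.
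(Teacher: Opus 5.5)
The gap is that you verify the four properties for two different sequences. Your arguments for (c1) and, above all, (c4) concern the single-scale mollification $\rho_{\varepsilon_k}\ast\bsmall_t$. As you observe yourself, this field violates the global bound in (c2), and in general (c3), because of the boundary layer created by the zero extension. Your second option (working on $\Omega_\varepsilon$) does not prove the statement, whose bounds are on all of $\Omega$. So you must use the Meyers--Serrin field $\bsmall^k_t=\sum_i\rho_{\varepsilon_i}\ast(\bsmall_t\varphi_i)$, which is what the paper does. For that field, the step ``\eqref{f:xstim} with $\bsmall^k$ in place of $\bsmall_\varepsilon$'' is not justified. Formula \eqref{f:xstim} rests on \eqref{f:ie}, whose proof uses $\Div_x\bsmall_\varepsilon(x,t)=\int_\Omega\rho_\varepsilon(x-y)f(y,t)\,d\sigma(y)$ for a single scale $\varepsilon\to0$. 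By contrast, $\Div_x\bsmall^k_t$ contains the commutator $e_t:=\sum_i[\rho_{\varepsilon_i}\ast(\bsmall_t\cdot\nabla\varphi_i)-\bsmall_t\cdot\nabla\varphi_i]$, involves several scales, and uses a partition that changes with $k$. The paper settles exactly this point with \eqref{mlml}, taken from \cite{CDM}: $\int_\Omega v\,\varphi\,\Div\bsmall^k_t\,dx\to\int_\Omega v^*\varphi\,d\Div\bsmall_t$ for $v\in\BVL[\Omega]$. It applies this to $v=\chi_{\{u>t\}}$ inside the layer-cake splitting $-I^k_1-I^k_2$, with dominated convergence in $t$ supplied by (c2)--(c3).

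Your route can be repaired cheaply, and it then gives a proof of (c4) that is more self-contained than the paper's. In the construction, also impose $R=R(k)\to\infty$ and $\varepsilon_1(k)\to0$. In the paper's notation $D_i$, the mollified pieces with $i\ge2$ vanish on $D_0$, and $\varphi_1\equiv1$ on $D_1\supset D_0$. Hence, for every compact $K\subset\Omega$ and all large $k$, one has $\bsmall^k_t=\rho_{\varepsilon_1(k)}\ast\bsmall_t$ on a neighbourhood of $K$, for every $t$. For $\spt\varphi\subset K$, the quantity $\langle\Pair{*}{\bsmall^k(\cdot,u),Du},\varphi\rangle=-\int_\Omega\varphi\,\Div_x\B^k(x,u)\,dx-\int_\Omega\B^k(x,u)\cdot\nabla\varphi\,dx$ is therefore $-\int_\Omega\nabla\varphi\cdot\vv_\varepsilon\,dx-I_\varepsilon$ with $\varepsilon=\varepsilon_1(k)$. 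The limit computed in the proof of Theorem~\ref{chainb4} (via \eqref{f:ie}) then gives (c4) without \eqref{mlml}.

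Two smaller points remain.
\begin{itemize}
\item In (c3), the least upper bound of the commutator contributions $|e_t|\,\Leb{N}$ is $(\sup_{t\in[-M,M]}|e_t|)\,\Leb{N}$. So you need $\int_\Omega\sup_t|e_t|\,dx\le1/k$, which is stronger than the bound $\sup_t\int_\Omega|e_t|\,dx\le1/k$ that your compactness argument gives. It follows from a finite net in $t$ together with the modulus $\sup_{|t-s|\le\delta}|\bsmall(x,t)-\bsmall(x,s)|$, which tends to $0$ in $L^1_{\mathrm{loc}}$ by (b2) and dominated convergence. Your bound of the main part by the $t$-independent measure $\sum_i\rho_{\varepsilon_i}\ast(\varphi_i\sigma)$ is correct.
\item For the multi-scale field, (c1) no longer follows from $\|\rho_\varepsilon\ast g\|_\infty\le\|g\|_\infty$, because $\sum_i\rho_{\varepsilon_i}\ast\varphi_i$ may exceed $1$ where different scales overlap. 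Divide $\bsmall^k$ by $c_k:=\|\sum_i\rho_{\varepsilon_i}\ast\varphi_i\|_\infty\ge1$, choosing the $\varepsilon_i$ so small that $c_k\to1$. This fixes (c1) without affecting (c2)--(c4).
\end{itemize}
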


\begin{remark}
It is clear that a vector field satisfying (c1)--(c2)
also satisfies assumptions (b1)--(b3) given in Section~\ref{s:ipo}.
Property~(c3) is a localized version of~(b4),
with respect to the $t$ variable.
\end{remark}

\begin{proof}
The main arguments of the proof are taken
from~\cite[Theorem~1.2]{ChenFrid} and~\cite[Proposition~4.11]{CDM}.
In our setting some care is needed in order to handle
the $t$ dependence of the vector field.

We briefly recall the construction given in~\cite[Theorem 1.2]{ChenFrid} which, in turn, is similar to the analogous
construction for $BV$ functions
(see, e.g., \cite[Theorem~3.9]{AFP}).

Given $k\in\N$, $k\geq 1$,
since $\sigma$ is a finite Radon measure,
there exists $R>0$ such that
\[
\sigma(\Omega\setminus D_i) < \frac{1}{k}\,,
\qquad
i = 0, 1, 2, \ldots,
\] 
where
\[
D_i := \left\{x\in\Omega\colon \dist(x, \partial\Omega) > \frac{1}{R + i}
\right\}  \cap B_{R+i}(0)\,.
\]
Consider the sets $\Omega_1 := D_2$ and
$\Omega_i := D_{i+1} \setminus \overline{D}_{i-1}$, 
$i \geq 2$, and 
let $(\varphi_i)$ be a partition of unity subordinate 
to the covering $\{\Omega_i\}$.
Let $(\rho_\varepsilon)$ denote the standard family of mollifiers.
For every $i\in\N$ we can choose $\overline{\varepsilon}_i > 0$
such that
\[
\spt \rho_{\overline{\varepsilon}_i} \ast
(\bsmall_t \, \varphi_i) \subset
D_{i+2} \setminus \overline{D}_{i-2}\,,
\quad \forall t\in\R
\qquad (D_{-1} := \emptyset).
\]
Let us consider the map
\[
(0,\overline{\varepsilon}_i]\times [-M,M]
\ni (\varepsilon, t) \mapsto
\int_\Omega |\rho_\varepsilon \ast (\bsmall_t\, \varphi_i)
- \bsmall_t\, \varphi_i|\, dx,
\]
extended to $0$ for $\varepsilon = 0$.
Since $\bsmall\in L^\infty(\Omega\times\R, \R^N)$,
by the basic properties of convolution it is continuous
%hence uniformly continuous,
in $[0,\overline{\varepsilon}_i]\times [-M,M]$.
Hence, there exists $\varepsilon_i \in (0, \overline{\varepsilon}_i]$
such that
\begin{equation}
\label{f:epsi}
\int_\Omega \left|
\rho_{\varepsilon_i} \ast (\bsmall_t\cdot \varphi_i) - 
\bsmall_t\cdot\varphi_i
\right|\, dx\leq \frac{1}{4 k\, 2^i}
\,,
\qquad\forall t\in [-M,M].
\end{equation}	
Using a similar argument, by possibly choosing a smaller
value of $\varepsilon_i$, it also holds that
\begin{equation}
\label{f:epsii}
\int_\Omega \left|
\rho_{\varepsilon_i} \ast (\bsmall_t\cdot\nabla\varphi_i) - 
\bsmall_t\cdot \nabla \varphi_i
\right|\, dx\leq \frac{1}{4 k\, 2^i}
\,,
\qquad\forall t\in [-M,M].
\end{equation}	
Finally, let us define
\[
\bsmall^k(x,t) :=
\sum_{i=1}^\infty \rho_{\varepsilon_i} \ast
(\bsmall_t\, \varphi_i)\,.
\]
By repeating verbatim the same arguments as in the proof 
of~\cite[Theorem 1.2, p.~93]{ChenFrid},
it follows that
\begin{equation}
\label{f:estbk}
\int_\Omega |\bsmall^k_t - \bsmall_t|\, dx
< \frac{1}{4k}\,,
\quad
\int_\Omega |\Div_x \bsmall^k_t|\, dx
\leq |\Div_x \bsmall_t| (\Omega) + \frac{1}{k}
\qquad \forall t\in [-M,M],
\end{equation}
so that properties~(c1) and~(c2) hold.
By the definition of $\sigma$ we have that
$|\Div_x \bsmall_t| (\Omega) \leq \sigma(\Omega)$,
so that also~(c3) follows.

The first estimate in~\eqref{f:estbk},
together with \cite[Theorem~1.1]{ChenFrid},
implies that
\[
|\Div_x \bsmall_t|(\Omega)
\leq
\liminf_{k\to+\infty} \int_\Omega |\Div_x \bsmall^k_t|\, dx,
\qquad \forall t\in [-M, M].
\]
Taking into account the second estimate in~\eqref{f:estbk},
we thus conclude that
\[
\lim_{k\to+\infty} \int_\Omega |\Div_x \bsmall^k_t|\, dx
= |\Div_x \bsmall_t|(\Omega),
\qquad \forall t\in [-M, M].
\]

\medskip
Let us prove (c4).
By repeating the same argument as in the proof of
\cite[Proposition~4.11]{CDM},
for every $v\in\BVL[\Omega]$, %with $\|v\|_\infty \leq M$,
and for every $t\in [-M, M]$,
it holds that
\begin{equation}\label{mlml}
	\lim_{k\to +\infty}\int_{\Omega} v \, \varphi \, \Div\bsmall^k_t\, dx
	= \int_{\Omega} v^* \,\varphi \, d \Div\bsmall_t= \int_{\Omega} v^* \,\varphi f(x,t)\, d \sigma
	\qquad \forall \varphi\in C_c(\Omega)
\end{equation}
(see \cite{CDM}, formula (4.8)). 
	
Let us fix $u\in \BV\cap L^\infty(\Omega)$,
with $\|u\|_\infty \leq M$, and $\varphi\in C^1_c(\Omega)$. 
To simplify the notation, we assume without loss of generality
that $u\geq 0$.
By the representation formula~\eqref{f:bxurepr1} and Fubini's Theorem, we have that
\begin{equation*}
%\label{f:bxuk}
\begin{split}
&\left\langle  \Pair{*}{\bsmall^k(\cdot,u), Du},\varphi\right\rangle  = 
-\int_\Omega \varphi \int_0^{u(x)} \,\Div_x \bsmall^k_t\, dt\, dx
- \int_\Omega \int_0^{u(x)} \bsmall^k_t\cdot \nabla\varphi\, dt\, dx
\\  & \phantom{xx} =  
- \int_0^{\infty} \int_\Omega \chi_{\{u > t\}}\, \varphi\,\Div_x \bsmall^k_t\,dx\, dt
- \int_0^{\infty} \int_\Omega \chi_{\{u > t\}}\, \bsmall^k_t\cdot
\nabla\varphi\, dx \, dt
\\ & \phantom{xx} =: -I^k_1 - I^k_2.
\end{split}
\end{equation*}
For every $t\in\R$ such that  $\chi_{\{u > t\}}\in\BV$, by \eqref{mlml} with $v = \chi_{\{u > t\}}$ we 
deduce that, as $k\to\infty$,
\[
\zeta^k(t) := 
\int_\Omega \chi_{\{u > t\}}\, \varphi\,\Div_x \bsmall^k_t\,dx
\to
\int_\Omega \chi_{\{u > t\}}^* \,\varphi f(x,t)\, d \sigma
\,.
\]
Let $K\Subset \Omega$ denote the support of $\varphi$
and let $a := \|u\|_{L^\infty(K)}$.
Since, by (c2) and (c3),
\[
|\zeta^k(t)| \leq \chi_{[0,a]}(t) \, \|\varphi\|_\infty 
(\sigma(K)+1)\,,
\]
by the Dominated Convergence Theorem and by Lemma~\ref{funzcaracter} we deduce that
\begin{equation}
\label{f:Ik1}
\begin{split}
& \lim_{k\to \infty} I^k_1 =
\int_0^{\infty} \int_\Omega \chi_{\{u > t\}}^* \,\varphi f(x,t)\, d \sigma=\int_0^{\infty} \int_\Omega \frac{\chi_{\{u^+ > t\}}+\chi_{\{u^- > t\}}}2 \,\varphi f(x,t)\, d \sigma
\\
&=\int_\Omega\varphi\frac12\left[\int_0^{u^+}f(x,t)\, dt+\int_0^{u^-}f(x,t)\, dt\right]\,d\sigma
=\int_\Omega \varphi \frac{F(x, u^-)+  F(x, u^+)}2 \,d\sigma\,.
\end{split}
\end{equation}
Let us compute the limit of $I^k_2$.
Since, for every $t\in [-M,M]$, $\bsmall^k_t\to \bsmall_t$ in $L^1(\Omega,\R^N)$,
it holds that
\[
\psi^k(t) := \int_\Omega \chi_{\{u > t\}}\, \bsmall^k_t(x)\cdot
\nabla\varphi\, dx \to
\int_\Omega \chi_{\{u > t\}}\, \bsmall_t\cdot
\nabla\varphi\, dx
\qquad (k\to +\infty).
\]
Since $\|\bsmall^k\|_{\infty} \leq \|\bsmall\|_{\infty}$ for every $k\in\N$, it holds that
\[
|\psi^k(t)| \leq \|\bsmall\|_{\infty}\, \int_\Omega|\nabla\varphi|\, dx\,,
\qquad \forall t\geq 0,\ \forall k\in\N,
\]
and hence, by the Dominated Convergence Theorem,
\begin{equation}
\label{f:Ik2}
\lim_{k\to +\infty} I^k_2
=
\int_0^{\infty} \int_\Omega \chi_{\{u > t\}}\, \bsmall_t\cdot
\nabla\varphi\, dx \, dt
=
\int_\Omega \int_0^{u} \bsmall_t\cdot \nabla\varphi\, dt\, dx
\,.
\end{equation}
The conclusion now follows from \eqref{f:Ik1} and \eqref{f:Ik2}.
\end{proof}

The following slicing property of the density generalizes the one proved in \cite[Proposition 5.2]{CDM} for the linear $\lambda$--pairing.

\begin{theorem}\label{t:theta}
Let $\theta(\bsmall,u;\cdot)$ be the density defined in \eqref{f:Theta}.
	Then for a.e. $t\in\R$
	\begin{equation}
		\label{f:gthetaut}
		\theta(\bsmall,u;x) =
		\theta(\bsmall_t, \chiut; x)
		\quad
		\text{for \(|D\chiut|\)-a.e.}\ x\in\Omega\setminus J_u\,. 
	\end{equation}
\end{theorem}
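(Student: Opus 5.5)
Following the linear case \cite[Proposition~5.2]{CDM}, the plan is to compare, on $\Omega\setminus J_u$, two disintegrations of the same measure and then differentiate with respect to $|Du|$ and $|D\chiut|$.

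\emph{Step 1: coarea on the diffuse part.} By \eqref{f:gsplitnosalti}, for every $\varphi\in C^1_c(\Omega)$,
\[
\int_{\Omega\setminus J_u}\varphi\,\theta(\bsmall,u;x)\,d|Du|
=\int_\R\int_{\Omega\setminus J_u}\varphi\,d\Pair{\lambda}{\bsmall_t,D\chiut}\,dt .
\]
By density this identity extends to bounded Borel $\varphi$ with compact support. Here we use that both sides are (integrals in $t$ of) finite measures dominated by $\|\bsmall\|_\infty|Du|$ and by $\|\bsmall\|_\infty|D\chiut|$ respectively, thanks to Theorem~\ref{chainb4} applied to each linear pairing. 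We also use the coarea formula for $BV$ functions, $|Du|=\int_\R|D\chiut|\,dt$.

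\emph{Step 2: identify the slice densities.} For a.e.\ $t$, the linear pairing satisfies $\Pair{\lambda}{\bsmall_t,D\chiut}\res(\Omega\setminus J_u)=\theta(\bsmall_t,\chiut;\cdot)\,|D\chiut|\res(\Omega\setminus J_u)$. This holds because $J_{\chiut}\setminus J_u$ is $\H^{N-1}$-null for a.e.\ $t$: outside $S_u$ the level set $\{u>t\}$ can have a jump only when $t=\ut(x)$, and the coarea formula shows this is negligible. Consequently, on $\Omega\setminus J_u$ the measure $|D\chiut|$ is the part where the density is the diffuse one. Substituting into Step 1, we get for every bounded Borel $\varphi$
\[
\int_\R\int_{\Omega\setminus J_u}\varphi\,\big[\theta(\bsmall,u;x)-\theta(\bsmall_t,\chiut;x)\big]\,d|D\chiut|\,dt=0 .
\]

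\emph{Step 3: localize in $t$.} This is the main obstacle: the last identity holds only after integrating in $t$, while we need it for a.e.\ $t$. The plan is to replace $\varphi$ by $\varphi(x)\,\eta(\ut(x))$ with $\eta\in C_c(\R)$. On $\Omega\setminus J_u$, for a.e.\ $t$ and $|D\chiut|$-a.e.\ $x$, one has $\ut(x)=t$, since $|D\chiut|$ is concentrated on $\partial^*\{u>t\}$ and off $S_u$ such points have $\ut=t$. The integrand therefore becomes $\eta(t)\varphi(x)[\ldots]$. Arbitrariness of $\eta$ then gives, for a.e.\ $t$, $\int\varphi[\theta(\bsmall,u)-\theta(\bsmall_t,\chiut)]\,d|D\chiut|=0$.

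Letting $\varphi$ range over a countable family generating the Borel sets completes the argument. This is possible because the exceptional $t$-set must be chosen independently of $\varphi$, which countability guarantees. It yields \eqref{f:gthetaut}.

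Measurability in $(t,x)$ of $\theta(\bsmall_t,\chiut;x)$ is needed for Fubini. This follows by writing the slice density as a limit of ratios of pairings over balls, which are measurable in $t$ by the explicit formula $-\int\chi^\lambda_{\{u>t\}}\,d\Div(\varphi\bsmall_t)$.
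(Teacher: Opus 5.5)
Your argument is correct in its main line, and it takes a genuinely different route from the paper.

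\textbf{Comparison with the paper.} The paper localizes in $t$ by truncation. With $v=(u\vee a)\wedge b$, it first shows $\theta(\bsmall,u;\cdot)=\theta(\bsmall,v;\cdot)$ $|Dv|$-a.e.\ off $J_v$, passing through the smooth fields $\bsmall^k$ of Theorem~\ref{t:convolu}, Anzellotti's formula for smooth fields and the BV chain rule. It then computes the diffuse pairing of $v$ in two ways (BV coarea formula and \eqref{f:gsplitnosalti}) to obtain \eqref{f:xslicdens} on every interval $[a,b]$.

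You instead insert the Borel test function $\varphi\,\eta(\ut)$ directly into \eqref{f:gsplitnosalti}. You then use that $\ut=t$ on $\partial^*\{u>t\}\setminus S_u$; this holds for every $t$, since at a point of approximate continuity with $\ut\neq t$ the set $\{u>t\}$ has density $0$ or $1$. Your weight $\eta(\ut)$ does what the truncation does in the paper, which in effect tests against $\chi_{[a,b)}(\ut)$. The gain is that you need neither Theorem~\ref{t:convolu} nor the limit $k\to\infty$.

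The only extra ingredient is the extension of \eqref{f:gsplit} to bounded Borel test functions. This is a monotone class argument based on $|\Pair{\lambda}{\bsmall_t,D\chiut}|\le\|\bsmall\|_\infty|D\chiut|$ and $\int_\R|D\chiut|(\Omega)\,dt=|Du|(\Omega)$. The paper uses the same extension implicitly whenever it restricts to $\Omega\setminus J_u$. That argument also gives measurability of $t\mapsto\int_{\Omega\setminus J_u}\varphi\,d\Pair{\lambda}{\bsmall_t,D\chiut}$, which is all Fubini needs. So joint measurability of the slice density, as in your last paragraph, is not required.

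\textbf{The error in Step~2.} The justification you give there is false and must be replaced. It is not true that $J_{\chiut}\setminus J_u$ is $\H^{N-1}$-null for a.e.\ $t$. Up to null sets $J_{\chiut}=\partial^*\{u>t\}$, and the coarea formula gives $\int_\R\H^{N-1}(\partial^*\{u>t\}\setminus J_u)\,dt=|D^du|(\Omega)$. So your claim holds only when $D^du=0$; take $u(x)=x_1$ as a counterexample. If the claim were true, $|D\chiut|\res(\Omega\setminus J_u)$ would vanish for a.e.\ $t$ and \eqref{f:gthetaut} would be empty. It is exactly this nonzero measure that your Step~3 handles.

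\textbf{The correct reading.} Since $\chiut$ has no diffuse part, $\theta(\bsmall_t,\chiut;\cdot)$ must be understood as the density with respect to $|D\chiut|$ of the linear pairing $\Pair{\lambda}{\bsmall_t,D\chiut}$ on $\Omega\setminus J_u$. This is a jump density living on $\partial^*\{u>t\}\cap\{\ut=t\}$, and it exists by the bound quoted above.

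It is independent of $\lambda$. Two linear pairings with $\lambda,\lambda'$ differ by $(\lambda'-\lambda)(\chiut[+]-\chiut[-])\Div_x\bsmall_t$. This measure is absolutely continuous with respect to $\sigma$ and concentrated on $\partial^*\{u>t\}$, up to $\H^{N-1}$-null sets, which $\sigma$ does not charge. On $\Omega\setminus J_u$, that set is contained in $\{\ut=t\}$ up to an $\H^{N-1}$-null set. Finally, $\sigma(\{\ut=t\})=0$ for all but countably many $t$, because these sets are pairwise disjoint and $\sigma$ is a Radon measure. Alternatively, your final identity itself shows that this density coincides with the $\lambda$-independent $\theta(\bsmall,u;\cdot)$.

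With Step~2 read this way, the rest of your proof goes through unchanged.
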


\begin{proof} 
	The conclusion \eqref{f:gthetaut} follows once we prove that, for every $a,b\in\R$, it holds that
	\begin{equation}\label{f:xslicdens}
\int_a^b dt
\int_{\Omega\setminus J_u} \theta(\bsmall,u;x)\, \varphi\, d |D\chiut|
=
\int_a^b dt
\int_{\Omega\setminus J_u} \theta( \bsmall_t, \chiut; x)\, \varphi\, d |D\chiut|\,.
\end{equation}

For fixed real numbers $a < b$, let us consider the truncation 
function 
$g(s) := s \vee a \wedge b$
and let $v := g\circ u$. 
The function $v$ satisfies
	\begin{equation}\label{f:uv}
		\begin{gathered}
			\{u > t\} = \{v > t\},\quad
			D\chiut = D\chi_{\{v > t\}}\,,
			\qquad \forall t\in [a,b),
			\\
			D\chi_{\{v > t\}} = 0\,,
			\qquad \forall t\in\R\setminus [a,b),%t < a, \  t \geq b,
		\end{gathered}
	\end{equation}
that, together with 
	\[
	\frac{d Du}{d|Du|} = \frac{d D\chiut}{d|D\chiut|}\,,
	\qquad
	\text{$|D\chiut|$-a.e.\ in}\ \Omega
	\]
	(see \cite[\S 4.1.4, Theorem~2(i), \S 4.1.5, Theorem~3(ii)]{GMS1}),
imply that
	\[
	\frac{d Du}{d|Du|}
	= \frac{d Dv}{d|Dv|}
	\qquad
	\text{$|Dv|$-a.e.\ in}\ \Omega.
	\]	 
	Let $(\bsmall^k)$ be the sequence of
	smooth vector fields approximating $\bsmall$ as in
	Theorem~\ref{t:convolu}. 
	By \cite[Proposition~2.3]{Anz}, and since $J_v\subseteq J_u$, we have that
	\[
	\theta(\bsmall^k, u; x) =
	\bsmall^k(x,\widetilde u(x))\cdot\frac{d Du}{d|Du|}(x)
	=
	\bsmall^k(x,\widetilde v(x))\cdot\frac{d Dv}{d|Dv|}(x)
	=\theta(\bsmall^k, v; x)
	\
	\]
	$|Dv|$-a.e.\ in $\Omega\setminus J_v$.
From the chain rule formula in BV (see \cite[Theorem~3.99]{AFP})
it holds that
\(
|Dv| \res (\Omega\setminus J_v) =
g'(\ut) |Du| \res (\Omega\setminus J_u),
\)
so that
\[
\theta(\bsmall^k, v;x) |Dv| \res (\Omega\setminus J_v) =
\theta(\bsmall^k, u; x) g'(\ut) |Du| \res (\Omega\setminus J_u).
\]
Passing to the limit as $k\to +\infty$, we obtain that
\[
\theta(\bsmall, v;\cdot) |Dv| \res (\Omega\setminus J_v) =
 \theta(\bsmall, u; \cdot) |Du| g'(\ut) \res (\Omega\setminus J_u)
= \theta(\bsmall, u;\cdot) |Dv| \res (\Omega\setminus J_v),
\]
so that
\begin{equation}\label{f:thetauv}
	\theta( \bsmall, u; x) = \theta( \bsmall, v; x)
	\qquad
	\text{$|Dv|$-a.e.\ in}\ \Omega\setminus J_v.
\end{equation}

Given $\varphi\in C^1_c(\Omega)$,
let us compute the diffuse part $(\bsmall(\cdot,v), Dv)^d$ of the pairing $(\bsmall(\cdot,v), v; x)$ restricted to  $\Omega\setminus J_u$ (recall that $\Omega\setminus J_u\subseteq \Omega\setminus J_v$).
By the definition \eqref{f:Theta} of $\theta( \bsmall, v; x)$, 
equality \eqref{f:thetauv}, the coarea formula in BV
(see \cite[Theorem~3.40]{AFP})
and \eqref{f:uv} it holds that
\begin{equation}
\label{f:eqv1}
\begin{split}
%\langle(\bsmall(\cdot,v), Dv)^d\res (\Omega\setminus J_u),\varphi\rangle
\int_{\Omega\setminus J_u} &\varphi(x)\,d(\bsmall(\cdot,v), Dv)
=
\int_{\Omega\setminus J_u} \theta( \bsmall, v; x)\, \varphi(x)\, d |Dv|
\\ & =
\int_{\Omega\setminus J_u} \theta( \bsmall, u; x)\, \varphi(x)\, d |Dv|
%	\\ & =
=\int_a^b dt
\int_{\Omega\setminus J_u} \theta( \bsmall, u; x)\, \varphi(x)\, d |D\chiut|\,.
\end{split}
\end{equation}
On the other hand, by the coarea formula \eqref{f:gsplitnosalti}
and by \eqref{f:uv},
it holds that
\begin{equation}
\label{f:eqv2}
\begin{split}
& \int_{\Omega\setminus J_u} \varphi\,d(\bsmall(\cdot,v), Dv)
=
\int_\R \, dt
\int_{\Omega\setminus J_u} \varphi\,d\left.(\bsmall_t, D\chi_{\{v > t\}})\right.
\\ & =
\int_a^b\, dt
\int_{\Omega\setminus J_u} \varphi\,d\left.(\bsmall_t, D\chi_{\{u > t\}})\right.
=
\int_a^b dt
\int_{\Omega\setminus J_u} \theta( \bsmall_t, D\chiut, x)\, \varphi\, d |D\chiut|\,.
\end{split}
\end{equation}
By comparison of \eqref{f:eqv1} and \eqref{f:eqv2},
we obtain that \eqref{f:xslicdens} holds for every $a < b$,
so that~\eqref{f:gthetaut} follows.
\end{proof}

\begin{remark}
In general \eqref{f:gthetaut} does not extend to $J_u$.
Consider, for example, the case $\bsmall(x,t) = g(t) \A(x)$, with $g\in C_b(\R)$
and $\A\in C^1(\R^N, \R^N)$.
By \cite[Remark 4.10]{CDM} the density of the linear pairing for the field $\bsmall_t$ does not depend on $\lambda$ and it is given by
\[
\theta^\lambda(\bsmall_t, \chi_{\{u > t\}}; x) =
\begin{cases}
g(t) \A(x)\cdot \nu_u(x),
&\text{if}\ t \in [u^-(x), u^+(x)],
\\
0, &\text{otherwise}\,.
\end{cases}
\]
On the other hand, since   $\gamma^i(x,t) = \gamma^e(x,t) = g(t)\, \A(x)\cdot\nu_u(x)$ for $\Haus$-a.e.\ $x\in J_u$, 
by~\eqref{f:tronJ666} we obtain
\[
\theta^\lambda(\bsmall, u; x ) = \A(x)\cdot \nu_u(x) \mean{u^-(x)}^{u^+(x)} g(s)\, ds\,.
\]
If we choose $u = \chi_{B_1}$, observing that
\[
D\chi_{\{u > t\}} =
\begin{cases}
\Haus\res {\partial B_1},
&\text{if}\ t\in [0,1),
\\
0,
&\text{otherwise},
\end{cases}
\]
the equality $\theta^\lambda(\bsmall,u;x) =	\theta^\lambda(\bsmall_t, \chiut; x)$ holds if and only if 
for a.e.\ $t\in [0,1]$,
\[
g(t) = \mean{0}^{1} g(s)\, ds,
\qquad
\text{for $\Haus$-a.e.\ $x\in\partial B_1$ s.t.\ $\A(x)\cdot\nu(x)\neq 0$},
\]
that is for  $g$ constant in $[0,1]$.
\end{remark}

\section{Integral representation of the pairing measure}
\label{s:intrepr}

We now employ the results of the previous section to obtain the representation of the diffuse
part of the pairing.
Recall that we already know the density of the absolutely continuous part of the
pairing, see~\eqref{f:acp}.

\begin{theorem}\label{t:diff}
Let $u\in BV(\Omega) \cap L^{\infty}(\Omega)$.
Then there exists a Borel set $B\subset\Omega$ with the following properties:
\begin{itemize}
\item 
[(i)] $|D^d u| (\Omega\setminus B) = 0$;

\item
[(ii)] setting $E_t := \{u > t\}$, 
for every $x\in B\setminus  S_u$
it holds that $E_{\ut(x)}$ is a set of finite perimeter in $\Omega$ and
$x \in \partial^* E_{\ut(x)}$.
As a consequence, $x \in \partial^* E_{\ut(x)}$ for $|D^d u|$-a.e.\ $x\in\Omega$;

\item[(iii)]
for every $x\in B\setminus S_u$, denoting $t := \ut(x)$, it holds that
\[
\polar[](\bsmall, u; x) =
	\polar[](\bsmall_{t}, u; x)
	= \polar[](\bsmall_{t}, \chi_{E_{t}}; x)
	= \Trace[*]{\bsmall_{t}}{\partial^* E_{t} }(x)
\]
(recall definition \eqref{f:trstar} of the last term).
\end{itemize}
\end{theorem}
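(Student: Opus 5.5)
We build $B$ as an intersection of finitely many sets of full $|D^du|$-measure, each coming from one of the slicing or coarea tools already available, and then verify (ii) and (iii) pointwise on it. The key device is the coarea decomposition $|D^du| = \int_\R |D\chi_{E_t}|\res(\Omega\setminus S_u)\,dt$ (coarea formula in $BV$, \cite[Theorem~3.40]{AFP}). Under this decomposition, a property that holds, for a.e.\ $t$, at $|D\chi_{E_t}|$-a.e.\ point of $\Omega\setminus J_u$ holds $|D^du|$-a.e. We will also use the standard fact that for $|D^du|$-a.e.\ $x\in\Omega\setminus S_u$ the point $x$ belongs to $\partial^*E_{\ut(x)}$; this is \cite[\S 4.1.4--4.1.5]{GMS1}, the same reference used in the proof of Theorem~\ref{t:theta}. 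Concretely, let $T\subset\R$ be a full-measure set of levels for which $\chi_{E_t}\in BV(\Omega)$, \eqref{f:gthetaut} holds, and the linear pairing $(\bsmall_t, D\chi_{E_t})$ has the structure needed below. For $t\in T$, let $G_t\subset\partial^*E_t\setminus S_u$ be the set of points where $\ut = t$ and all the required pointwise identities hold. Set $B := \bigcup_{t\in T} G_t$; measurability is arranged by replacing this union with a Borel subset of full $|D^du|$-measure. By coarea, $|D^du|(\Omega\setminus B)=0$, which gives (i). For $x\in B\setminus S_u$ we have $x\in G_t$ with $t=\ut(x)$, so $E_t$ has finite perimeter and $x\in\partial^*E_t$, which gives (ii).

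For (iii), the three equalities come from different sources. The identity $\theta(\bsmall,u;x)=\theta(\bsmall_t,\chi_{E_t};x)$ is exactly Theorem~\ref{t:theta} evaluated at $t=\ut(x)$; this is what builds \eqref{f:gthetaut} into the definition of $G_t$. The identity $\theta(\bsmall_t,\chi_{E_t};x)=\Trace[*]{\bsmall_t}{\partial^*E_t}(x)$ is the linear-pairing computation on a set of finite perimeter. Since $\chi_{E_t}$ jumps on $\partial^*E_t$, Proposition~\ref{p:tracesonJ}, read in the linear case ($\bsmall$ independent of $t$, $\lambda = 1/2$, the diffuse part being $\lambda$-independent), gives the density $\tfrac12(\Trp{\bsmall_t}{\partial^*E_t}+\Trm{\bsmall_t}{\partial^*E_t})$ with respect to $\H^{N-1}\res\partial^*E_t$. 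Here one must check that the orientation $\chi^i=\chi^+$ coincides with the interior normal $\nuint_{E_t}$. Finally, the middle identity $\theta(\bsmall,u;x)=\theta(\bsmall_t,u;x)$ (the linear pairing of the frozen field $\bsmall_t$ with $u$) follows by applying Theorem~\ref{t:theta} to the constant-in-$t$ field $\bsmall_t$ and $u$. This yields $\theta(\bsmall_t,u;x)=\theta(\bsmall_t,\chi_{E_s};x)$ for a.e.\ $s$, and we need it at $s=t$.

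The main obstacle is precisely this ``freezing'' at the level $s=\ut(x)$. Theorem~\ref{t:theta} holds only for a.e.\ level and $|D\chi_{E_s}|$-a.e.\ point, while the field $\bsmall_s$ itself varies with $s$. We must therefore select a countable dense set of parameters $\{t_j\}$ and use the Lipschitz dependence of normal traces on $t$ (Proposition~\ref{p:lip} and Remark~\ref{r:trbsmall}, with $|\gamma(x,t)-\gamma(x,t_j)|$ controlled through the continuity in (b2) and uniform estimates) to pass from $\theta(\bsmall_{t_j},\cdot)$ to $\theta(\bsmall_t,\cdot)$ simultaneously for all $t$. I would first try a diagonal argument: for each $t_j$, intersect the full-measure sets given by Theorem~\ref{t:theta} applied to the fixed field $\bsmall_{t_j}$. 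Then, at a point $x$ lying on $\partial^*E_t$ with $t$ a common good level, I would use the continuity of $t\mapsto\Trace[*]{\bsmall_t}{\partial^*E_t}(x)$ (obtained by identifying the traces of $\bsmall_t$ on $\partial^*E_t$ with those on a fixed rectifiable cover, via locality of normal traces \cite{AmbCriMan}) to conclude.
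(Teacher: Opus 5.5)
Your construction of $B$ as a union of good slices $G_t\subset\partial^*E_t\setminus S_u$ is the paper's proof, and so are the coarea argument for (i) and the remark that $x\in G_t$ forces $t=\ut(x)$ for (ii). The same holds for the chain $\theta(\bsmall,u;x)=\theta(\bsmall_t,\chi_{E_t};x)=\Trace[*]{\bsmall_t}{\partial^*E_t}(x)$, obtained from Theorem~\ref{t:theta} and the linear jump formula. The gap is the step you flag yourself: the frozen identity $\theta(\bsmall,u;x)=\theta(\bsmall_t,u;x)$ at $t=\ut(x)$. The continuity in $t$ on which your diagonal argument rests is not available.

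\begin{itemize}
\item Proposition~\ref{p:lip} concerns $\beta^{i,e}$, the traces of $\B_t=\int_0^t\bsmall_s\,ds$. Its Lipschitz constant only encodes the $L^\infty$ bound on $\gamma^{i,e}=\partial_t\beta^{i,e}$ (Remark~\ref{r:trbsmall}). It gives no modulus of continuity for $t\mapsto\gamma^{i,e}(x,t)$.
\item Nor does (b2), an $\LLN$-a.e.\ statement, pass to normal traces, which live on $\LLN$-null sets. Take $\nabla^\perp\Phi:=(-\partial_2\Phi,\partial_1\Phi)$ and $\rho\in C^\infty_c(-1,1)$ with $\rho(0)=1$. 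On $(-1,1)^2$ set $\bsmall(x,t):=\nabla^\perp\bigl(t\sin(x_1/t)\,\rho(x_2/t)\bigr)$ for $t\neq0$ and $\bsmall(x,0):=0$. This field satisfies (b1)--(b4) with $\sigma=0$: it is bounded, divergence free, and vanishes for $0<|t|\leq|x_2|$. Yet its normal trace on $\{x_2=0\}$ is $\cos(x_1/t)$ for $t\neq0$ and $0$ for $t=0$.
\item What (b1)--(b4) give is at most weak${}^*$ convergence of the traces as $t_j\to t$ (here $\cos(x_1/t)\rightharpoonup0$). That cannot produce an $\hh$-a.e.\ identity.
\item Since $x\in\partial^*E_t\setminus S_u$ forces $t=\ut(x)$, the map $t\mapsto\Trace[*]{\bsmall_t}{\partial^*E_t}(x)$ is defined at a single $t$. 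The only meaningful version freezes the surface. For that version, the example shows the hypotheses give no pointwise continuity.
\end{itemize}

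There is also a more basic issue. For each fixed $t$, $\theta(\bsmall_t,u;\cdot)$ is determined only $|Du|$-a.e., while $|D^du|(\{\ut=t\})=0$ for every $t$ (\cite[Proposition~3.92]{AFP}). The middle term of (iii) evaluates the frozen density precisely on $\{\ut=t\}$. So it has a meaning only once a representative is fixed jointly in $(x,t)$, and no identity for it there can be proved for arbitrary a.e.\ representatives, by limits or otherwise.

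The paper does not argue by continuity. It lists the middle identity among the properties (a)--(c) defining the good levels. It takes it from the linear results of \cite{CCDM} (Corollary~3.7 and Theorem~3.12) with $\A=\bsmall_t$. This amounts to representing $\theta(\bsmall_t,u;\cdot)$ by the linear trace formula $x\mapsto\Trace[*]{\bsmall_t}{\partial^*E_{\ut(x)}}(x)$. With that representative, the middle term at $t=\ut(x)$ is literally the last one, and the substantive content of (iii) is the chain you already prove. So adopt this reading and drop the density argument.

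A version for a canonical representative, such as the Besicovitch derivative of $(\bsmall_t,Du)$ with respect to $|Du|$, would need a new estimate uniform in $t$. The paper has estimates of that type only for approximate continuity (Remark~\ref{r:sigmah2}) and for $F$ (Lemma~\ref{l:lebF}).
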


\begin{proof}
The line of proof is similar to the one of \cite[Theorem~3.12]{CCDM}.
For the sake of completeness we report here some details.
	
Let $Z\subset\R$ be the set such that
for every $t\in \R\setminus Z$ the following hold:
\begin{itemize}
	\item[(a)]
	$E_t := \{u > t\}$ is a set of finite perimeter in $\Omega$;
	\item[(b)]
	$\hh\Bigl(\{x\in \Omega \setminus S_u\colon \ut(x)=t \}\setminus
	\bigl((\Omega \setminus S_u)\cap\partial^*\{u>t\}\bigr)\Bigr)=0$;
	\item[(c)]
	$\displaystyle \polar[](\bsmall, u; x) 
= \polar[](\bsmall_t, u; x)
= \polar[](\bsmall_t, \chi_{E_t}; x)
	= \Trace[*]{\bsmall_t}{\partial^* E_t }(x)
	\quad
	\text{for $\hh$-a.e.}\ x\in \partial^* E_t\cap (\Omega\setminus J_u).
	$
\end{itemize}
By the coarea formula in $BV$,
by Theorem~\ref{t:theta}, 
Corollary~3.7 and Theorem~3.12 in \cite{CCDM} (with $\A=\bsmall_t$),
we have that $\LLU(Z) = 0$.

Since $\LLU(Z) = 0$,
by \cite[Proposition~3.92(a)(c)]{AFP}, we have that
\[
\nabla u = 0 \quad \text{$\LLN$-a.e.\ in}\ u^{-1}(Z) \, \text{ and } \, 
|D^c u| (\ut^{-1}(Z)) = 0.
\]
As a consequence,
for $|D^d u|$-a.e.\ $x$ we have that
$\ut(x) \in \R\setminus Z$,
i.e.\ $|D^d u|(\ut^{-1}(Z)) = 0$.

For every $t\in\R\setminus Z$, let $N_t\subset \partial^* E_t$
be a set such that
the following hold:
\begin{itemize}
	\item[(d)]
	$\ut(x) = t$ for every $x\in (\Omega \setminus S_u) \cap (\partial^* E_t \setminus N_t)$;
	\item[(e)]
	equality in (c) holds for every $x\in \partial^* E_t \setminus N_t$.
\end{itemize}
By (b) and (c), the set $N_t$ can be chosen of zero
$\hh$ measure.

We claim that
\begin{equation}
	\label{f:B1}
	|D^d u|(\Omega \setminus B) = 0,
	\qquad
	\text{where}\
	B := \bigcup_{t\in\R\setminus Z} ( \partial^* E_t \setminus N_t)\,.
\end{equation}
Specifically, 
since the sets $\partial^* E_t \setminus S_u$, $t\in\R\setminus Z$, are pairwise disjoint
(see \cite[p.~356]{GMS1}),
we have that
$
\partial^* E_t \cap ((\Omega \setminus S_u) \setminus B) = (\Omega \setminus S_u) \cap N_t$,
hence, by the coarea formula for $BV$ functions,
\begin{align*}
	|D^d u|(\Omega\setminus B) = |Du|((\Omega \setminus S_u)\setminus B)
	& = \int_{\R\setminus Z} \hh(\partial^* E_t \cap ((\Omega \setminus S_u) \setminus B))\, dt \\
	& \le \int_{\R\setminus Z} \hh(N_t)\, dt = 0.
\end{align*}

Finally, for every $x\in B \setminus S_u$
(hence, by \eqref{f:B1}, for $|D^d u|$-a.e.\ $x\in\Omega$),
we have that $x \in \partial^* E_{\ut(x)}$. 
\end{proof}

As a consequence of Theorems~\ref{t:theta} and~\ref{t:diff},
and \cite[Corollary~4.13]{CCDM},
we obtain the following result.

\begin{corollary}
\label{c:diff}
Let $x\in B$, where $B$ is the set defined in Theorem~\ref{t:diff},
let $t := \ut(x)$,
and assume that $\bsmall_t$ is approximately continuous at $x$. Then 
$\theta(\bsmall, u; x) = \widetilde{\bsmall_t}(x) \cdot \nu_u(x)$.
\end{corollary}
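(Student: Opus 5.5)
The plan is to combine Theorem~\ref{t:diff}(iii) with the known linear result \cite[Corollary~4.13]{CCDM}, applied to the single field $\A=\bsmall_t\in\DM(\Omega)$ with $t=\ut(x)$ fixed. Let $x\in B$ and $t:=\ut(x)$. Since the conclusion concerns the diffuse density, I read $x$ as a point of $B\setminus S_u$, where Theorem~\ref{t:diff} applies. (The points of $B\cap S_u$ are $|D^du|$-negligible, and $\ut(x)$ is undefined there.) By Theorem~\ref{t:diff}(ii), $E_t=\{u>t\}$ has finite perimeter and $x\in\partial^*E_t$. By Theorem~\ref{t:diff}(iii),
\[
\theta(\bsmall,u;x)=\theta(\bsmall_t,\chi_{E_t};x)=\Trace[*]{\bsmall_t}{\partial^*E_t}(x).
\]
The problem therefore reduces to a statement about the linear pairing of a fixed $\DM$ field with the characteristic function of a set of finite perimeter.

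Next I will invoke \cite[Corollary~4.13]{CCDM}. For $\A\in\DM(\Omega)$ approximately continuous at a point $x$ of $\partial^*E$, that result identifies the density of the pairing with $\widetilde\A(x)\cdot\nuint_E(x)$. Here this gives $\Trace[*]{\bsmall_t}{\partial^*E_t}(x)=\widetilde{\bsmall_t}(x)\cdot\nuint_{E_t}(x)$. The heuristic behind it is a blow-up at $x$: the rescaled sets $E_t$ converge to a half-space. At the same time, approximate continuity makes $\bsmall_t$ converge in $L^1_{\rm loc}$ to the constant $\widetilde{\bsmall_t}(x)$. Both normal traces therefore converge to $\widetilde{\bsmall_t}(x)\cdot\nuint_{E_t}(x)$.

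It remains to identify $\nuint_{E_t}(x)$ with $\nu_u(x)$, understood as the polar density $dDu/d|Du|$. As recalled in the proof of Theorem~\ref{t:theta} (via \cite{GMS1}), $dDu/d|Du|=dD\chi_{E_t}/d|D\chi_{E_t}|$ holds $|D\chi_{E_t}|$-a.e. This equality holds only up to an $\Haus$-null subset of $\partial^*E_t$. I would therefore enlarge the exceptional set $N_t$ in the construction of $B$ to include the points where it fails, and also the points where the equalities of (c) fail. This keeps $\Haus(N_t)=0$, and the coarea argument giving $|D^du|(\Omega\setminus B)=0$ goes through unchanged.

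I expect the main obstacle to be exactly this pointwise bookkeeping. The results being combined hold only $\Haus$-a.e.\ on $\partial^*E_t$, whereas the corollary is stated for every $x\in B$. Each a.e.\ identity must thus be absorbed into the definition of $B$ through $N_t$, and one must check that the point $x$ where approximate continuity of $\bsmall_t$ is assumed is a point where the \cite{CCDM} identity is valid. If \cite[Corollary~4.13]{CCDM} holds at every approximately continuous point of the reduced boundary, the argument closes directly. Otherwise, I would redo the blow-up argument at $x$ using the trace definition via \eqref{f:trA}.
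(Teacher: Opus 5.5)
Your proposal is correct and is essentially the paper's argument. Theorem~\ref{t:diff}(iii) reduces $\theta(\bsmall,u;x)$ to $\Trace[*]{\bsmall_t}{\partial^*E_t}(x)$ for the single linear field $\bsmall_t$, and \cite[Corollary~4.13]{CCDM} then identifies this with $\widetilde{\bsmall_t}(x)\cdot\nu_u(x)$; your extra bookkeeping (working in $B\setminus S_u$, and absorbing into $N_t$ the $\Haus$-null exceptional sets of the linear result and of the identity $\nu_u=\nuint_{E_t}$) makes explicit what the paper leaves implicit.
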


Summarizing the previous results, we obtain the complete representation of the pairing densities.

\begin{theorem}\label{t:repr}
	Let $\bsmall$ satisfy assumptions $(b1)-(b4)$,
	let $u\in BV(\Omega) \cap L^{\infty}(\Omega)$, and assume 
that the jump set $J_u$ is oriented so that $u^i=u^+$ and $u^e=u^-$.
	Then for $|D^j u|$-a.e.\ $x\in\Omega$,
	\begin{equation}\label{f:reprJ}
		\begin{split}
			\theta^\lambda(\bsmall, Du; x) &= \lambda\beta^i(x,u^+(x))+(1-\lambda)\beta^e(x,u^-(x))) \\ &
			=\mean{u^-}^{u^+}[\lambda\gamma^i(x,t)+(1-\lambda)\gamma^e(x,t)]\,dt
			=:\Trace[\lambda]{\bsmall(\cdot,u)}{J_u}(x)\,,
		\end{split}
	\end{equation}
	and
	\begin{equation}\label{f:reprJ1}
		\begin{split}
			\polar(\bsmall,Du;x) & = (\beta^i(x,u^+(x))-\beta^i(x,\Prec{u}(x)))+(\beta^e(x,\Prec{u}(x))- \beta^e(x,u^-(x)))\\
			& = \mean{u^-}^{u^+}[\chi_{[u^\lambda, u^+]}\gamma^i(x,t)+\chi_{[u^-, u^\lambda]}\gamma^e(x,t)]\,dt
			=:
			\Trl{\bsmall(\cdot, u)}{J_u}(x)\,.
		\end{split}
	\end{equation}
	Moreover, 
	for $|D^du|$-a.e.\ $x\in\Omega$ and denoting $t := \ut(x)$,
	\begin{equation}\label{f:reprC3}
\polar[](\bsmall, Du; x) = \polar[](\bsmall_t, D\chi_{\{u> t\}}; x) = 
		\Trace[*]{\bsmall_t}{\partial^* \{u> t\} }(x).
	\end{equation}
In particular, for $|D^au|$-a.e.\ $x\in\Omega$
\begin{equation}
\label{f:rompimento}
\polar[]{(\bsmall, u; x)} = {\bsmall}(x, u(x)) \cdot \nu_u(x).
\end{equation}
\end{theorem}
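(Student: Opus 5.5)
The theorem collects results already proved, so the plan is to split $|Du|$ into its jump part, its Cantor part and its absolutely continuous part, and to identify the density on each piece from earlier statements.

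\emph{Jump part.} Orient $J_u$ so that $u^i=u^+$ and $u^e=u^-$. Formula \eqref{f:reprJ1} for $\polar(\bsmall,Du;\cdot)$ is read off from \eqref{f:tronJ} in Proposition~\ref{p:tracesonJ}, using $|D^ju|=(u^+-u^-)\Haus\res J_u$ as in Remark~\ref{r:Theta}. For \eqref{f:reprJ}, start from \eqref{f:tronJ666}. Write $\beta^{i,e}(x,u^\pm)$ through Remark~\ref{r:trbsmall} as $\int_0^{u^\pm}\gamma^{i,e}(x,t)\,dt$, regroup the differences as integrals over $[u^-,u^+]$, and divide by $u^+-u^-$.

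This step needs some care, because the statement attaches the weight $\lambda$ to $\gamma^i$ while \eqref{f:tronJ666} attaches it to $\gamma^e$. I will first check that the convention for $\Trace[\lambda]{\cdot}{\cdot}$ matches \eqref{f:tronJ666} after a possible relabelling $\lambda\leftrightarrow 1-\lambda$. I will also check that the first expression in \eqref{f:reprJ} is to be read as the mean of the trace differences, i.e.\ $\lambda[\beta^i(x,u^+)-\beta^i(x,u^-)]+(1-\lambda)[\beta^e(x,u^+)-\beta^e(x,u^-)]$ divided by $u^+-u^-$. If this bookkeeping is consistent, the jump part needs nothing further; I expect it to be the main place where the writing can go wrong.

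\emph{Diffuse part.} By \eqref{nosalti} and Remark~\ref{r:Theta}, on $\Omega\setminus J_u$ both densities coincide with $\theta(\bsmall,u;\cdot)$, which does not depend on $\lambda$. Theorem~\ref{t:diff} provides a Borel set $B$ with $|D^du|(\Omega\setminus B)=0$. For every $x\in B\setminus S_u$, with $t=\ut(x)$, it gives
\[
\theta(\bsmall,u;x)=\theta(\bsmall_t,\chi_{E_t};x)=\Trace[*]{\bsmall_t}{\partial^*E_t}(x).
\]
Since $|D^du|(S_u)=0$, this is exactly \eqref{f:reprC3}.

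\emph{Absolutely continuous part.} For \eqref{f:rompimento}, use \eqref{f:acp}, which gives $\theta\,|D^au|=\bsmall(\cdot,\ut)\cdot\nabla u\,\LLN$. Since $\nabla u=\nu_u|\nabla u|$ and $|D^au|=|\nabla u|\LLN$, it follows that $\theta=\bsmall(x,\ut(x))\cdot\nu_u(x)$ $|D^au|$-a.e. Moreover $\ut=u$ $\LLN$-a.e., so this is the claimed formula. Alternatively, one can apply Corollary~\ref{c:diff}, using Remark~\ref{r:sigmah2}, which says that $\bsmall_t$ is approximately continuous outside the $\LLN$-null set $Z_1$ for all $t$ simultaneously.
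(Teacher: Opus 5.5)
Your plan is the paper's proof. The jump formulas come from Proposition~\ref{p:tracesonJ}, \eqref{f:reprC3} from Theorem~\ref{t:diff}(iii) (which already incorporates Theorem~\ref{t:theta}), and \eqref{f:rompimento} from Corollary~\ref{c:diff} with Remark~\ref{r:sigmah2}, or more directly from \eqref{f:acp}. Your treatment of the diffuse and absolutely continuous parts needs nothing further.

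The consistency check you postpone fails, so you should settle it rather than leave the jump part conditional. Dividing \eqref{f:tronJ666} by $u^+-u^-$ gives, $|D^ju|$-a.e.,
\[
\theta^\lambda(\bsmall,u;x)=\frac{(1-\lambda)[\beta^i(x,u^+)-\beta^i(x,u^-)]+\lambda[\beta^e(x,u^+)-\beta^e(x,u^-)]}{u^+-u^-}
=\mean{u^-}^{u^+}[(1-\lambda)\gamma^i(x,t)+\lambda\gamma^e(x,t)]\,dt .
\]
This agrees with Remark~\ref{r:Theta}. For $\bsmall=\A$ it also agrees with the direct computation $(\Div(u\A)-u^\lambda\Div\A)\res J_u=[(1-\lambda)\Trp{\A}{J_u}+\lambda\Trm{\A}{J_u}](u^+-u^-)\,\Haus\res J_u$.

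So \eqref{f:reprJ} as printed has $\lambda$ and $1-\lambda$ interchanged. No relabelling repairs this, because $\lambda$ is the same function as in $\Pair{\lambda}{\bsmall(\cdot,u),Du}$. The printed identity fails whenever $(1-2\lambda)\mean{u^-}^{u^+}(\gamma^i-\gamma^e)\,dt\neq 0$.

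Two further misprints should not be carried into your write-up:
\begin{itemize}
\item The first line of \eqref{f:reprJ} lacks the differences and the factor $1/(u^+-u^-)$. In the linear case it is not even invariant under $u\mapsto u+c$, while the pairing is.
\item The first line of \eqref{f:reprJ1} is the density with respect to $\Haus\res J_u$, not with respect to $|Du|$.
\end{itemize}
The paper's one-line proof passes over these points. What your argument, like the paper's, actually proves is the corrected formula above, together with the mean-value form of \eqref{f:reprJ1}; state it that way.
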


\begin{proof}
One can easily get that \eqref{f:reprJ} and \eqref{f:reprJ1} follow from Proposition \ref{p:tracesonJ}.
On the other hand, the proof of \eqref{f:reprC3} is a
direct consequence of Theorems~\ref{t:theta} and~\ref{t:diff}(iii).
Formula~\eqref{f:rompimento} is a consequence of Corollary~\ref{c:diff} and Remark~\ref{r:sigmah2}
(see also~\eqref{f:acp}).
\end{proof}

As a consequence of \eqref{f:reprC3},
for vector fields of bounded variation in $x$ we obtain the following result.

\begin{corollary}\label{c:shil}
Let $\bsmall$ satisfy assumptions $(b1)-(b4)$
and let $u\in BV(\Omega) \cap L^{\infty}(\Omega)$. 
Assume, in addition, that $\bsmall_t \in BV(\Omega, \R^N)$ for every $t\in\R$.
Then
\begin{equation}\label{f:thetareg2}
	\theta(\bsmall, u; x) = \bsmall^*(x,\widetilde u(x)) \cdot \nu_u(x),
	\qquad
	\text{for $|D^d u|$-a.e.}\ x\in \Omega.
\end{equation}
\end{corollary}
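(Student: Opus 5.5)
We start from \eqref{f:reprC3} of Theorem~\ref{t:repr}: for $|D^du|$-a.e.\ $x\in\Omega$, with $t:=\ut(x)$ and $E_t:=\{u>t\}$,
\[
\theta(\bsmall,u;x)=\Trace[*]{\bsmall_t}{\partial^*E_t}(x).
\]
The problem is then to evaluate the symmetric normal trace of the $BV\cap\DM$ field $\bsmall_t$ on the reduced boundary $\partial^*E_t$, at the point $x$, where $x\in\partial^*E_t$ by Theorem~\ref{t:diff}(ii). For a field $\A\in BV(\Omega,\R^N)\cap L^\infty$, the normal traces on a countably $\H^{N-1}$-rectifiable oriented set $\Sigma$ coincide $\H^{N-1}$-a.e.\ with the $BV$ one-sided traces dotted with the normal. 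Precisely, $\Trace[i,e]{\A}{\Sigma}=\A^{i,e}\cdot\nu_\Sigma$, where $\A^{i,e}$ are the approximate limits of $\A$ on the two half balls. This follows from the Gauss--Green formula for $BV$ fields on $C^1$ domains, applied to the sets $\Omega_i,\Omega_i'$ of (R3), since there the $BV$ trace equals the $L^1$ one-sided limit. Hence
\[
\Trace[*]{\bsmall_t}{\partial^*E_t}=\tfrac12(\bsmall_t^i+\bsmall_t^e)\cdot\nu_{E_t}=\bsmall_t^*\cdot\nu_{E_t}
\qquad\H^{N-1}\text{-a.e.\ on }\partial^*E_t.
\]
At points outside $J_{\bsmall_t}$ the value $\bsmall_t^*$ is simply the approximate limit $\widetilde{\bsmall_t}$, and $\H^{N-1}(S_{\bsmall_t}\setminus J_{\bsmall_t})=0$.

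Next we identify the normals. By the relation between $Du$ and $D\chi_{E_t}$ used in the proof of Theorem~\ref{t:theta} (GMS, \S4.1.4--4.1.5), for a.e.\ $t$ we have $\nu_{E_t}=\frac{dDu}{d|Du|}=\nu_u$ for $\H^{N-1}$-a.e.\ point of $\partial^*E_t\setminus S_u$.

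The main obstacle is measure-theoretic bookkeeping. The exceptional $\H^{N-1}$-null sets on $\partial^*E_t$ depend on $t$, and the field $\bsmall_t$ itself varies with $t$. We handle this as in the proof of Theorem~\ref{t:diff}: enlarge the $\LLU$-null set $Z$ of bad levels, and for $t\notin Z$ add to $N_t$ the $\H^{N-1}$-null set where the trace identity or the normal identity fails. The coarea formula for $BV$ functions then gives that the union $\bigcup_{t\notin Z}(N_t\setminus S_u)$ is $|D^du|$-negligible, exactly as in \eqref{f:B1}. The condition $|D^du|(\ut^{-1}(Z))=0$ still holds because $Z$ remains $\LLU$-null.

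Combining these steps, for $|D^du|$-a.e.\ $x$ with $t=\ut(x)$ we obtain $\theta(\bsmall,u;x)=\bsmall_t^*(x)\cdot\nu_u(x)=\bsmall^*(x,\ut(x))\cdot\nu_u(x)$, which is \eqref{f:thetareg2}. One technical point remains: measurability in $t$ of the exceptional sets. We avoid it by working with a set $Z$ defined through the integrated identity, as in \eqref{f:xslicdens}, rather than pointwise in $t$.
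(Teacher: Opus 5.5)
Your proposal is correct and follows the paper's route. The paper derives the corollary directly from \eqref{f:reprC3}, and you supply the step it leaves implicit: for a $BV$ field, the normal traces on $\partial^*E_t$ are the one-sided $BV$ traces dotted with the inner normal, so $\Trace[*]{\bsmall_t}{\partial^*E_t}=\bsmall_t^*\cdot\nu_u$, and the $t$-dependent $\H^{N-1}$-null sets are absorbed into the sets $N_t$ of Theorem~\ref{t:diff}. Your closing remark on measurability is vague, but the paper's own argument for \eqref{f:B1} leaves the same point unaddressed, namely the measurability of unions over uncountably many $t$.
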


\begin{remark}
\label{r:auto}
In the autonomous case $\bsmall(x,t)=\bsmall(t)$ 
the representation formulas \eqref{f:reprJ}, \eqref{f:reprJ1} and \eqref{f:thetareg2} yield the well known Volpert's formulas, 
\begin{equation*}%\label{chainrule22}
(\bsmall(u),Du)=
%\Div (\B(u))=
\left[\mean{u^-(x)}^{u^+(x)}\bsmall(t)\cdot\nu_u(x)
\,dt\right] |Du|\,,
\end{equation*}
in the sense of measures, with the convention that $\mean{a}^{a}\bsmall(t)\,dt=\bsmall(a)$. Indeed,
$$
\gamma^{i,e}(x,t)=\Tr^{i,e}(\bsmall_t,J_u)=\bsmall(t)\cdot\nu_u(x).
$$
\end{remark}

\begin{remark}
Property \eqref{f:reprC3} connects the density of the nonlinear pairings to the linear pairing's density, making it possible to obtain further integral representations of the density via cylindrical averages or averages over half-balls
(see \cite[Section 4]{CCDM}).
\end{remark}

\section{The Gauss--Green formula}
\label{s:green}

Let $E\Subset\Omega$ be a set of finite perimeter, and denote by $\partial^* E$ its reduced boundary.
We will assume that the generalized normal vector on \(\partial^* E\) coincides
\(\hh\)-a.e.\ on \(\partial^* E\) with the measure--theoretic 
inward unit normal vector \(\nuint_E\) to \(E\).

The following result generalizes the one proved in \cite[Theorem 6.3]{CDM} for linear pairings.

\begin{theorem}[Gauss--Green formula]\label{t:GG}
	Let $\bsmall$
	satisfy assumptions $(b1)-(b4)$, let \(\B\) be defined by \eqref{f:B}, and let $F$ be as in \eqref{f:F}. 	
Given  a bounded set \(E\) with finite perimeter and compactly contained in $\Omega$, 
let $\beta^i(\cdot,t)$ and $\beta^e(\cdot,t)$ denote the %inner and outer 
normal traces of the field \(\B(\cdot,t)\) 
on $\partial^*E$ defined in \eqref{f:beta}.  
Let  $u\in BV(\Omega)\cap L^\infty(\Omega)$, and 
$\lambda \in \spacelambda$.
	Then the following Gauss--Green formulas hold:
	\begin{equation}
		\label{primaaa}
	\int_{E^1} \big(\lambda F(x, u^+) + (1-\lambda) F(x, u^-)\big) \, d\sigma
	+ \int_{E^1}[\bsmall(\cdot,u), Du]_\lambda
	=-\int_{\partial ^*E} \beta^i(x,u^i) \ d\mathcal H^{N-1}
	\end{equation}
	\begin{equation}
	\label{secondaa}
	\int_{E^1\cup\partial^*E}  \big(\lambda F(x, u^+) + (1-\lambda)F(x, u^-)\big) \, d\sigma
	 + \int_{E^1\cup\partial^*E}[\bsmall(\cdot,u), Du]_\lambda 
	 = -\int_{\partial^*E} \beta^e(x,u^e) \ d\mathcal H^{N-1}
	\end{equation}
	 \begin{equation}
	 	\label{terzaa}
	\int_{E^1} F(x, u^\lambda)  \, d\sigma
	+ \int_{E^1}(\bsmall(\cdot,u), Du)_\lambda
	=
	-\int_{\partial ^*E} \beta^i(x,u^i) \ d\mathcal H^{N-1}
\end{equation}
	\begin{equation}
	\label{quartaa}
	\int_{E^1\cup\partial^*E} F(x, u^\lambda)  \, d\sigma
	 + \int_{E^1\cup\partial^*E}(\bsmall(\cdot,u), Du)_\lambda  = -\int_{\partial ^*E} \beta^e(x,u^e) \ d\mathcal H^{N-1}
	\end{equation}
where $E^1$ is the measure theoretic interior of $E$.
\end{theorem}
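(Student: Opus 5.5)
Observe first that, by the definitions \eqref{f:pairlambda} and \eqref{f:pairlambdaext}, each left-hand side is simply $\Div(\B(x,u))$ evaluated on a Borel set:
\[
\int_{A}\big(\lambda F(x,u^+)+(1-\lambda)F(x,u^-)\big)\,d\sigma+\int_A[\bsmall(\cdot,u),Du]_\lambda=\Div\vv(A)=\int_A F(x,u^\lambda)\,d\sigma+\int_A(\bsmall(\cdot,u),Du)_\lambda,
\]
where $\vv(x)=\B(x,u(x))$ and $A$ is $E^1$ or $E^1\cup\partial^*E$. All integrals are finite because $\sigma$, $|Du|$ and the pairings are finite measures on the compact set $\overline E\subset\Omega$, and $F$ is bounded on the range of $u$. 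Formulas \eqref{primaaa} and \eqref{terzaa} thus reduce to the single identity $\Div\vv(E^1)=-\int_{\partial^*E}\Trp{\vv}{\partial^*E}\,d\Haus$. Likewise \eqref{secondaa} and \eqref{quartaa} reduce to $\Div\vv(E^1\cup\partial^*E)=-\int_{\partial^*E}\Trm{\vv}{\partial^*E}\,d\Haus$.

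By Lemma~\ref{l:divv}, $\vv\in\DM(\Omega)$. I will therefore invoke the Gauss--Green formula for bounded divergence-measure fields on sets of finite perimeter $E\Subset\Omega$, which holds with the interior/exterior normal traces on $\partial^*E$ as in \cite{ChenFrid}, \cite{CD3}, or \cite[Theorem~6.3]{CDM} with $u\equiv1$. These traces are taken with the inward normal $\nuint_E$, consistent with the convention fixed before the theorem. The formula reads
\[
\Div\vv(E^1)=-\int_{\partial^*E}\Trp{\vv}{\partial^*E}\,d\Haus,\qquad \Div\vv(E^1\cup\partial^*E)=-\int_{\partial^*E}\Trm{\vv}{\partial^*E}\,d\Haus.
\]
The two are consistent with each other by \eqref{f:trA} applied on $\Sigma=\partial^*E$. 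If one prefers to avoid citing it in this generality, one can derive it from the linear pairing case: take $\A=\vv$ and the function $\chi_E$, then use the Leibniz rule $\Div(\chi_E\vv)=\chi_E^\lambda\Div\vv+(\vv,D\chi_E)_\lambda$ with compact support, so that $\Div(\chi_E\vv)(\Omega)=0$. Finally, use the jump representation of the linear pairing on $\partial^*E$ with $\lambda=0$ or $\lambda=1$ for the two formulas.

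Next, identify the traces of $\vv$ on $\Sigma=\partial^*E$ with Proposition~\ref{p:traces}, which is valid for any oriented countably $\Haus$-rectifiable $\Sigma$. It gives $\Trace[i,e]{\vv}{\partial^*E}(x)=\beta^{i,e}(x,u^{i,e}(x))$ for $\Haus$-a.e.\ $x\in\partial^*E$, where on $\partial^*E\cap J_u$ the values $u^{i,e}$ refer to the orientation of $\partial^*E$, and off $J_u$ they equal $\ut$. One needs $\Haus(\partial^*E\cap(S_u\setminus J_u))=0$, which holds since $u\in BV$. Substituting yields all four formulas.

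The main obstacle is consistency of orientations. On $\partial^*E\cap J_u$ the normal $\nu_u$ may differ from $\nuint_E$ by a sign. I must check that Proposition~\ref{p:traces}, applied with $\Sigma=\partial^*E$ oriented by $\nuint_E$, assigns $u^i$ to the side of $E$, that is, to the half-ball $B^i_r(x,\nuint_E)$. The sign of the traces of $\B_t$ must also match the definition \eqref{f:beta}. This should be handled by the independence of traces from the chosen $C^1$ covering (\cite[Proposition~3.2]{AmbCriMan}) and by the fact that, $\Haus$-a.e.\ on $\partial^*E\cap J_u$, the approximate tangent planes of $J_u$ and $\partial^*E$ agree, so that $\nu_u=\pm\nuint_E$ and relabeling $u^i\leftrightarrow u^e$ accordingly is harmless.
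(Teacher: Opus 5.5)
Your proposal is correct and is essentially the paper's argument: the paper also reduces all four formulas to the values $\Div\vv(E^1)$ and $\Div\vv(E^1\cup\partial^*E)$, which it obtains from the Leibniz rule for $\chi_E\vv$ together with $\Div(\chi_E\vv)(\R^N)=0$, and it then identifies the traces of $\vv$ on $\partial^*E$ via Proposition~\ref{p:traces}, exactly as in your alternative derivation. Your choice $\lambda\in\{0,1\}$ in the Leibniz step only avoids the extra $\lambda$-weighted boundary terms that cancel in the paper's computation.
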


The first two formulas concern the external pairings, whereas the following two concern the nonlinear pairings.
Moreover, formulas~\eqref{primaaa} and~\eqref{terzaa} are related to the measure-theoretic interior $E^1$ of the set $E$,
whereas formulas~\eqref{secondaa} and~\eqref{quartaa} are related to its measure-theoretic closure $E^1\cup\partial^* E$. 

\begin{proof}
We shall prove formulas \eqref{primaaa} and \eqref{secondaa} 
following the lines of the proof of \cite[Theorem 6.1]{CD4}, where $\lambda=\frac12$.

Since $E$ is compactly contained in $\Omega$, we may assume without loss of generality
that $\Omega = \R^N$.
By Theorem~\ref{chainb4}, the composite function $\vv(x) := \B(x, u(x))$
belongs to $\DM(\R^N)$.
Since $E$ is a bounded set of finite perimeter, the characteristic function
$\chi_E$ is a compactly supported $BV$ function, so that
$\Div (\chi_E \vv) (\R^N) = 0$
(see \cite[Lemma~3.1]{ComiPayne}).

Applying the Leibniz formula for the divergence
\(
\Div(w \A) = w^\lambda \Div \A + (\A, Dw)_\lambda.
\)
to $w = \chi_E\in BV(\R^N)\cap L^\infty(\R^N)$ and $\A = \vv\in \DM(\R^N)$,
it follows that
\begin{equation*}%\label{f:eqgg1}
0 = \int_{\R^N} \Div (\chi_E \vv) 
= \int_{\R^N} (\vv, D\chi_E)_\lambda+\int_{\R^N} \chi_E^\lambda \, d\Div \vv .
\end{equation*}
From Proposition~\ref{p:traces} and the representation formula~\eqref{f:reprJ} we get
\begin{equation}\label{f:eqgg2}
	\begin{split}
\int_{\R^N} \chi_E^\lambda \, d\Div \vv & =-(\vv, D\chi_E)_\lambda(\R^N)  \\ & = -\int_{\partial^* E} 
\big((1-\lambda)\beta^i(x, u^i) +\lambda \beta^e(x, u^e)\big)
\, d\H^{N-1}.
\end{split}
\end{equation}
On the other hand, we notice that
\begin{equation*}%\label{f:chilambda}
\chi_E^\lambda = (1- \lambda) \chi_{E^1} + \lambda \chi_{E^1 \cup \partial^* E} = \chi_{E^1} + \lambda \chi_{\partial^* E},
\end{equation*}
since $\chi_E^- = \chi_{E^1}$ and $\chi_{E}^+ = \chi_{E^1 \cup \partial^* E}$.
Using again 
Proposition~\ref{p:traces} 
and
\eqref{f:pairlambdaext} 
we obtain
\begin{equation}\label{f:eqgg3}
\begin{split}
\int_{\R^N} \chi_E^\lambda \, d\Div \vv = {} &
\int_{E^1}\,d\Div \vv  + \lambda\int_{\partial^* E} \,d\Div \vv 
\\ = {} &
\int_{E^1}\,d\Div (\B(x,u))  +\lambda \int_{\partial^* E} \,[\beta^i(x,u^i)-\beta^e(x,u^e)]\, d\mathcal H^{N-1}
\\ = {} &\int_{E^1} [(1-\lambda) F(x, u^-)+\lambda F(x, u^+) ]\, d\sigma
+ \int_{E^1}\,d[\bsmall(\cdot,u),Du]_\lambda\\  {} &
 +\lambda \int_{\partial^* E} \,[\beta^i(x,u^i)-\beta^e(x,u^e)]\, d\mathcal H^{N-1}
 \,.
\end{split}
\end{equation}
Formula \eqref{primaaa} now follows from \eqref{f:eqgg2} and \eqref{f:eqgg3}.
The proof of \eqref{secondaa} is entirely similar.

In order to prove~\eqref{terzaa} and~\eqref{quartaa}, it is sufficient to observe that by~\eqref{f:pairlambda}
we obtain that
\begin{equation*}
\begin{split}
\int_{\R^N} \chi_E^\lambda \, d\Div \vv = {} &
\int_{E^1}\,d\Div \vv  + \lambda\int_{\partial^* E} \,d\Div \vv 
\\ = {} &
\int_{E^1}\,d\Div (\B(x,u))  +\lambda \int_{\partial^* E} \,[\beta^i(x,u^i)-\beta^e(x,u^e)]\, d\mathcal H^{N-1}
\\ = {} &\int_{E^1} F(x, u^\lambda)\, d\sigma
+ \int_{E^1}\,d(\bsmall(\cdot,u),Du)_\lambda\\  {} &
 +\lambda \int_{\partial^* E} \,[\beta^e(x,u^e)-\beta^i(x,u^i)]\, d\mathcal H^{N-1}
 \,.
\qedhere
\end{split}
\end{equation*}
\end{proof}

\section{Semicontinuity results}
\label{s:sc}

In this section we consider the pairing
as a function in $BV$:
\[
\BV\cap L^\infty(\Omega)\ni u \mapsto \pair{\bsmall(\cdot,u),Du} \in \mathcal{M}(\Omega),
\]
and we characterize the selections $\lambda\in\spacelambda$ making this function lower semicontinuous 
along bounded sequences converging in $L^1$ with controlled pointwise values (see  Definition \ref{d:conv}).

In order to find the correct selections giving the lower semicontinuity of this function, we consider
$D := \{(a,b)\in\R^2\colon a\leq b\}$, and we define
\[
\widehat{\mathcal{F}}(x,a,b):= \max_{t\in [a,b]}F(x,t) 
\qquad (a,b)\in D.
\] 
Recalling the definition~\eqref{f:F} of $F$,
we immediately deduce that  $\widehat{\mathcal{F}}(x,0,0) = F(x,0) = 0$.
Moreover, using the Lipschitz property~\eqref{f:estiv},
the following elementary facts can be easily checked.
\begin{itemize}
	\item[(i)] For every $x \in\Omega$ and $b\in\R$, the map $a\mapsto \widehat{\mathcal{F}}(x,a,b)$ is monotone non-increasing in $(-\infty, b]$.
More precisely, $0\leq \widehat{\mathcal{F}}(x,a,b) - \widehat{\mathcal{F}}(x,a',b) \leq a'-a$ for every $a\leq a'\leq b$.
	\item[(ii)] For every $x \in\Omega$ and $a\in\R$, the map $b\mapsto\widehat{\mathcal{F}}(x,a,b)$ is monotone non-decreasing in $[a,+\infty)$.
More precisely, $0\leq \widehat{\mathcal{F}}(x,a,b') - \widehat{\mathcal{F}}(x,a,b) \leq b'-b$ for every $a\leq b\leq b'$.
	\item[(iii)] For every $x \in\Omega$, the map $(a,b) \mapsto \widehat{\mathcal{F}}(x,a,b)$ is Lipschitz continuous in $D$.
More precisely, $|\widehat{\mathcal{F}}(x,a',b') - \widehat{\mathcal{F}}(x,a,b)| \leq |a'-a| + |b'-b|$
for every $(a,b), (a',b') \in D$.
\end{itemize}

For a fixed $u\in \BVL[\Omega]$, we consider the measure 
\begin{equation}\label{f:pairlsc}
	\Lpair{\bsmall(\cdot,u), Du}:= - \widehat{\mathcal{F}}(x,u^-(x),u^+(x))\, \sigma + \Div(\B(x,u(x))),
\end{equation}
where the subscript $L$ in the notation stands for \textit{Lower} semicontinuous. 
Since $F(x,\cdot)$ is a Lipschitz function for $\sigma$-a.e.\ $x\in\Omega$, 
for every $u\in\BV\cap L^\infty(\Omega)$ the set
\begin{equation*}
	\laM[u]  := \{\lambda\in\spacelambda\colon \widehat{\mathcal{F}}(x,\umeno(x),\upiu(x))= F(x,\Prec{u}(x))\ \text{for}\ \sigma\text{-a.e.}\ x\in \Omega\}
\end{equation*}
is not empty.
Specifically, let us consider the Borel set $A := \Omega\setminus(S_u\setminus J_u)$ and 
let us define the function
\[
g(x) := \max_{s\in [0,1]} F(x, (1-s) u^-(x) + s\, u^+(x)),
\qquad x\in A.
\]
Since $F$ is continuous with respect to the second variable, we have that
\[
g(x) = \sup_{q\in \mathbb{Q}\cap [0,1]} F(x, (1-q) u^-(x) + q\, u^+(x)),
\qquad x\in A,
\]
is a Borel-measurable function,
being the supremum of a countable family of Borel-measurable functions.
Hence, the function
\[
\psi(x,s) := g(x) - F(x, (1-s) u^-(x) + s\, u^+(x)),
\qquad
(x,s) \in A\times [0,1],
\]
satisfies all the assumptions of Lemma~\ref{l:meas},
so that there exists $\lambda\in\spacelambda[A]$,
that can be extended to a function $\widehat{\lambda}\in\spacelambda$, such that
$\psi(x, \widehat{\lambda}(x)) = 0$ for every $x\in A$, and hence for
$\sigma$-a.e.\ $x\in\Omega$.

By the definition of $\psi$, this is equivalent to say that
\begin{equation}
\label{f:uhat}
\widehat{\mathcal{F}}(x,\umeno(x),\upiu(x))= F(x,\uh(x)), 
\qquad
\uh := \Prec[\widehat\lambda]{u}\,,
\qquad \text{for $\sigma$-a.e.}\ x\in\Omega.
\end{equation}
In particular, $\uh$ is a Borel measurable function. Moreover,
\begin{equation*}%\label{f:lmpair}
\Lpair{\bsmall(\cdot,u), Du}= \pair{\bsmall(\cdot,u), Du}, \qquad \forall\lambda \in \laM[u],
\end{equation*}
so that Proposition \ref{p:tracesonJ} and Theorem \ref{chainb4} are in force.

We remark that, for every $\lambda\in\spacelambda$,
\begin{equation}\label{f:low}
\pscal{\Lpair{\bsmall(\cdot,u),Du}}{\varphi}
\leq
\pscal{\pair{\bsmall(\cdot,u), Du}}{\varphi},
\qquad
\forall \varphi\in C_c^1(\Omega),\ \varphi\geq 0.
\end{equation}

\begin{remark}
	We can give an explicit representation of $\laM[u]$  for separable fields, that is when
	there exists a decomposition $\Omega=\Omega^+ \cup \Omega^-$, with $\Omega^+\cap \Omega^- =\emptyset$,
	such that \eqref{splitomega1} holds.
	In this case,
	$F(x,\cdot)$ is a nondecreasing function on $\R$ for $\sigma$--a.e.\ $x\in \Omega^+$, and it is 
	a nonincreasing function on $\R$ for $\sigma$--a.e.\ $x\in \Omega^-$,
	so that
	\[
	\begin{split}
		\laM[u]  & = \{\lambda\in\spacelambda\colon \widehat{\mathcal{F}}(x,\umeno(x),\upiu(x))= F(x,\Prec{u}(x))\ \text{for $\sigma$-a.e. $x$\ in}\ \Omega \} \\
		&=
		\{\lambda\in\spacelambda\colon \lambda=0\ \text{$\sigma$-a.e.\ in }\ \Omega^+\cap J_u,\  \lambda=1\ \text{$\sigma$-a.e.\ in}\ \Omega^-\cap J_u\}.
	\end{split}
	\]
	In particular, in the autonomous case $\bsmall(x,t) \equiv \A(x)$ we recover the results proved in \cite{CDM}.
\end{remark}

We introduce the sequences along which we will compute our functionals. We add to the usual $L^1$-convergence a very mild version of convergence of the precise representatives of $BV$ functions,
and an equi-boundedness condition in $L^1(\Omega;\sigma)$.

\begin{definition}\label{d:conv}
Let $(u_n)_n$ be a sequence in $\BVL[\Omega]$
and let $u\in\BVL[\Omega]$.
We say that $(u_n)\in\Conv$ if the following properties hold.
\begin{itemize}
\item[(i)]  $u_n \to u$ in $L^1(\Omega)$.
\item[(ii)] The Pointwise Representative property (PR-property for short) holds, meaning that
\begin{equation}\label{f:lahti}
		\umeno(x) \leq \liminf_n u_{n}^-(x)\leq \limsup_n u_{n}^+(x)
		\leq \upiu(x)\,,\ \ \  {\text{$\sigma$-a.e. in}}\  \Omega.
	\end{equation}
\item[(iii)]
There exists $g\in L^1(\Omega; \sigma)$ such that
$|u_n^\pm| \leq g$, $\sigma$-a.e.\ in $\Omega$, for every $n\in\N$.
\end{itemize}
\end{definition}	

\begin{remark}\label{strictly}
We recall that by \cite[Theorem 3.2, and Corollary 3.3]{La},  
if a sequence $u_n$ strictly converges in $BV$ to $u$, 
then it admits a subsequence satisfying the PR-property $\mathcal{H}^{N-1}$-a.e.\ in $\Omega$, and hence $\sigma$-a.e. in $\Omega$.
\end{remark}

\begin{remark}\label{lebesgue}
If $\sigma \ll \mathcal{L}^N$, 
then any sequence $(u_n)$ converging to $u$ in $L^1(\Omega)$
admits a subsequence satisfying (ii) and (iii).
Hence, if $(u_n)$ and $u$ belong to $\BVL[\Omega]$ and $u_n\to u$ in $L^1(\Omega)$,
then there exists a subsequence $(u_{n_k})\in\Conv$.
\end{remark}

\begin{remark}
Condition (iii) is needed to apply dominated convergence in $L^1(\Omega; \sigma)$
when passing to the limit in the pairing
along sequences.
\end{remark}

The following essential tool, which guarantees the existence of approximating sequences with well-behaved pointwise values,  is proved in \cite[Theorem 3.2]{ComiLeo}.

\begin{theorem}[Comi--Leonardi $\lambda$-approximation]\label{l:CL}
Fixed $\lambda\in\spacelambda$, for every 
	$u \in\BVL[\Omega]$ there exists a sequence $(\ucl)_{n\in\N}\subset C^\infty(\Omega)\cap BV(\Omega)$ such that the following hold:
	\begin{enumerate}
		\item 
$\ucl \to u$ in $BV(\Omega)$-strict as $n \to +\infty$;

		\item 
$\ucl(x) \to u^\lambda(x)$ for $\H^{N-1}$--a.e.\ $x\in\Omega$ as $n \to +\infty$;

		\item 
$\|\ucl\|_{L^\infty(\Omega)}\leq \left( 1+\frac{1}{n}\right)\| u\|_{L^\infty(\Omega)}$.
	\end{enumerate}
	In particular, $(\ucl)\in \mathcal{A}(u)$.
\end{theorem}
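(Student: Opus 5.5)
The construction of the sequence with properties (1)--(3) is the content of \cite[Theorem~3.2]{ComiLeo}, and I would take it from there without reproducing it. For completeness, recall its idea: a Meyers--Serrin type partition of unity with mollification radii chosen on each annular region, where the mollifier is shifted or weighted across the jump set according to $\lambda$ so that the pointwise limits select $u^\lambda$ rather than $u^*$. What remains to check is the final assertion $(\ucl)\in\Conv$, that is, the three conditions of Definition~\ref{d:conv}. I would verify them one at a time.

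\emph{Condition (i).} Strict convergence in $BV(\Omega)$ includes $\ucl\to u$ in $L^1(\Omega)$ by definition, so (i) follows directly from (1).

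\emph{Condition (ii).} Since each $\ucl$ belongs to $C^\infty(\Omega)$, every point is a point of approximate continuity of $\ucl$. Hence $(\ucl)^+(x)=(\ucl)^-(x)=\ucl(x)$ for every $x\in\Omega$. By (2), there is an $\Haus$-null set $N\subset\Omega$ with $\ucl(x)\to u^\lambda(x)$ for all $x\in\Omega\setminus N$. We may also enlarge $N$ by the $\Haus$-null set $S_u\setminus J_u$, so that $u^\lambda$ is defined on $\Omega\setminus N$. By the definition \eqref{f:pr}, $u^-(x)\le u^\lambda(x)\le u^+(x)$ there, since $\lambda(x)\in[0,1]$. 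Consequently
\[
u^-(x)\le \liminf_n (\ucl)^-(x)=u^\lambda(x)=\limsup_n(\ucl)^+(x)\le u^+(x)\qquad \forall x\in\Omega\setminus N .
\]
Finally, $\sigma\in\radonh$ by Remark~\ref{r:sigmah}, so $\sigma(N)=0$, and the PR-property \eqref{f:lahti} holds $\sigma$-a.e.

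\emph{Condition (iii).} By (3), $|(\ucl)^\pm|=|\ucl|\le 2\|u\|_{L^\infty(\Omega)}$ everywhere, uniformly in $n$. The constant function $g:=2\|u\|_{L^\infty(\Omega)}$ belongs to $L^1(\Omega;\sigma)$ because $\sigma$ is a finite measure, as assumption (b4) is used throughout, e.g.\ in Theorem~\ref{t:convolu}. If only local finiteness of $\sigma$ were available, this is the step I expect to be the main obstacle, since a constant need not be $\sigma$-integrable on all of $\Omega$. In that case one would have to localize the definition of $\Conv$, or use the finiteness $\sigma(\Omega)<\infty$ implicit in (b4). Under the standing assumptions, however, (iii) holds, and hence $(\ucl)\in\Conv$.
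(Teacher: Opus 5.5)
Your proposal is correct and takes the same approach as the paper, which also takes properties (1)--(3) directly from \cite[Theorem~3.2]{ComiLeo}. Your check of $(\ucl)\in\Conv$ spells out the ``in particular'' that the paper states without proof: continuity gives $(\ucl)^\pm=\ucl$, the $\Haus$-a.e.\ convergence to $u^\lambda\in[u^-,u^+]$ transfers to $\sigma$-a.e.\ via $\sigma\in\radonh$, and the uniform bound (3) is $\sigma$-integrable because $\sigma(\Omega)<+\infty$, which the paper itself assumes, e.g.\ in the proof of Theorem~\ref{t:convolu}.
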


\begin{lemma}\label{l:limCL}
Let $\bsmall$ satisfy assumptions $(b1)$-$(b4)$. 
Given $u\in \BVL[\Omega]$, and $\lambda\in\spacelambda$, let $(\ucl)_{n\in\N}$ be the sequence given by Theorem \ref{l:CL}. Then for every $\Lambda\in\spacelambda$
\begin{equation}\label{f:limCL}
\lim_{n \to \infty} \pscal{\pair[\Lambda]{\bsmall(\cdot,\ucl),D\ucl}}{\varphi}= \pscal{\pair{\bsmall(\cdot,u),Du}}{\varphi},
\quad \forall \varphi\in C^1_c(\Omega).
\end{equation}
\end{lemma}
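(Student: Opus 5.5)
Since each $\ucl$ is smooth, $S_{\ucl}=\emptyset$ and $(\ucl)^\Lambda=\ucl$ everywhere. The pairing on the left of \eqref{f:limCL} therefore does not depend on $\Lambda$:
\[
\pscal{\pair[\Lambda]{\bsmall(\cdot,\ucl),D\ucl}}{\varphi}
= -\int_\Omega \varphi\, F(x,\ucl(x))\, d\sigma - \int_\Omega \B(x,\ucl)\cdot\nabla\varphi\, dx .
\]
The limit on the right is, by Definition~\ref{d:pair},
\[
-\int_\Omega \varphi\, F(x,u^\lambda)\,d\sigma - \int_\Omega \B(x,u)\cdot\nabla\varphi\,dx .
\]
So the plan is to pass to the limit separately in the two terms.

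\emph{Second term.} By Theorem~\ref{l:CL}(1), $\ucl\to u$ in $L^1(\Omega)$, so after passing to a subsequence $\ucl\to u$ a.e. By (b2) and the definition \eqref{f:B}, $t\mapsto\B(x,t)$ is continuous (indeed Lipschitz with constant $\|\bsmall\|_\infty$) for every $x$. Hence $\B(x,\ucl(x))\to\B(x,u(x))$ a.e. Moreover \eqref{f:LipB} and Theorem~\ref{l:CL}(3) give the uniform bound $|\B(x,\ucl)|\le 2C\|u\|_\infty$, and $\nabla\varphi$ is compactly supported. Dominated convergence then yields convergence of the second term. We can avoid the subsequence altogether by using $|\B(x,\ucl)-\B(x,u)|\le\|\bsmall\|_\infty|\ucl-u|$ directly, which gives convergence along the whole sequence.

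\emph{First term.} By Theorem~\ref{l:CL}(2), $\ucl\to u^\lambda$ $\H^{N-1}$-a.e. Since $\sigma\in\radonh$ (Remark~\ref{r:sigmah}), this convergence also holds $\sigma$-a.e. By Lemma~\ref{l:B}, for $\sigma$-a.e. $x$ the map $t\mapsto F(x,t)$ is $1$-Lipschitz by \eqref{f:estiv}, with $F(x,0)=0$. Hence $F(x,\ucl(x))\to F(x,u^\lambda(x))$ for $\sigma$-a.e. $x$, and
\[
|F(x,\ucl(x))|\le |\ucl(x)|\le 2\|u\|_\infty
\qquad\text{for $\sigma$-a.e. } x .
\]
Because $\varphi$ has compact support $K$ and $\sigma(K)<\infty$ by (b4), dominated convergence in $L^1(\Omega;\sigma)$ gives convergence of the first term.

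The main obstacle is the $\sigma$-a.e. convergence in the first term. One point to check is that the exceptional $\H^{N-1}$-null set of Theorem~\ref{l:CL}(2) is indeed $\sigma$-null, which is exactly Remark~\ref{r:sigmah}. The other is that Lemma~\ref{l:B} provides a single $\sigma$-null set outside of which \eqref{f:estiv} holds for all $t,s$ simultaneously, so that the pointwise limit is justified. Everything else is a routine application of dominated convergence. Since neither step uses a subsequence, the full sequence converges, which proves \eqref{f:limCL} for every $\Lambda$.
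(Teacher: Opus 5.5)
Your proof is correct and follows the same route as the paper: since $\ucl$ is smooth, the pairing reduces to $-\int_\Omega F(x,\ucl)\varphi\,d\sigma-\int_\Omega \B(x,\ucl)\cdot\nabla\varphi\,dx$, which does not depend on $\Lambda$, and both terms pass to the limit by dominated convergence using \eqref{f:LipB}, \eqref{f:estiv} and Theorem~\ref{l:CL}. You also make explicit what the paper leaves implicit, namely the $\sigma$-a.e.\ convergence via $\sigma\in\radonh$ and the uniform bounds, and your minus sign in front of the $\B\cdot\nabla\varphi$ term is correct (the paper's display has a $+$ there).
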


\begin{proof}
	For every $\varphi\in C^1_c(\Omega)$ we have
	\[
	 \pscal{\pair[\Lambda]{\bsmall(\cdot,\ucl),D\ucl}}{\varphi}= \int_\Omega \B(x,\ucl) \nabla \varphi\, dx -
	 \int_\Omega F(x,\ucl)\varphi\, d\sigma,
	\]
	with right--hand side not depending on $\Lambda$ and with both terms that pass to the limit by (generalized) dominated convergence, due to \eqref{f:LipB}, \eqref{f:estiv}, and Theorem \ref{l:CL}.
\end{proof}

We first prove that, fixed $u\in \BVL[\Omega]$, then the $\lambda$-pairings for $\lambda\in \laM[u]$ are lower semicontinuous along the sequences in $\Conv$.

\begin{theorem}
	\label{t:lsc}
	Let $\bsmall$ satisfy assumptions $(b1)$-$(b4)$. 
Given $u\in \BVL[\Omega]$, let $\Lpair{\bsmall(\cdot,u),Du}$ be the pairing defined in \eqref{f:pairlsc}.
Then, for every $(u_n)\in\Conv$,
	it holds that
	\begin{equation}\label{f:lsc1}
		\pscal{\Lpair{\bsmall(\cdot,u),Du}}{\varphi}
		\leq
		\liminf_{n\to +\infty} \pscal{\Lpair{\bsmall(\cdot,u_n),Du_n)}}{\varphi}
		\qquad
		\forall \varphi\in C^\infty_c(\Omega),
		\
		\varphi\geq 0.
	\end{equation}
\end{theorem}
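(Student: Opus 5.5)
We write the pairing $\Lpair{\bsmall(\cdot,u_n),Du_n}$ directly from definition \eqref{f:pairlsc}. For $\varphi\in C^\infty_c(\Omega)$ with $\varphi\ge 0$, this gives
\[
\pscal{\Lpair{\bsmall(\cdot,u_n),Du_n}}{\varphi}
= -\int_\Omega \B(x,u_n)\cdot\nabla\varphi\,dx - \int_\Omega \varphi\,\widehat{\mathcal{F}}(x,u_n^-,u_n^+)\,d\sigma .
\]
The same formula holds for $u$. The plan is to treat the two terms separately: show that the first converges, and that the second satisfies a limsup inequality with the correct sign. No $BV$ bound on $u_n$ is needed, because the definition never involves $Du_n$ explicitly.

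\emph{First term.} By \eqref{f:B}, (b1) gives $|\B(x,s)-\B(x,t)|\le \|\bsmall\|_\infty|s-t|$. Hence
\[
\Bigl|\int_\Omega (\B(x,u_n)-\B(x,u))\cdot\nabla\varphi\,dx\Bigr|\le \|\bsmall\|_\infty\|\nabla\varphi\|_\infty\|u_n-u\|_{L^1(\Omega)}\to 0
\]
by Definition~\ref{d:conv}(i).

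\emph{Second term.} This is the step that carries the real content. We need
\[
\limsup_n \int_\Omega \varphi\,\widehat{\mathcal{F}}(x,u_n^-,u_n^+)\,d\sigma \le \int_\Omega\varphi\,\widehat{\mathcal{F}}(x,u^-,u^+)\,d\sigma .
\]
Fix $x$ outside a $\sigma$-null set where \eqref{f:lahti} holds, $F(x,\cdot)$ is $1$-Lipschitz by \eqref{f:estiv}, and all the $u_n^\pm(x)$ and $u^\pm(x)$ are defined. Here we use that $\sigma\in\radonh$ (Remark~\ref{r:sigmah}), so $\H^{N-1}$-null sets such as $S_{u_n}\setminus J_{u_n}$ are $\sigma$-null; the countable union over $n$ is still null. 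Set $a=u^-(x)$, $b=u^+(x)$, and fix $\varepsilon>0$. By \eqref{f:lahti}, for $n$ large we have $[u_n^-(x),u_n^+(x)]\subset[a-\varepsilon,b+\varepsilon]$. Properties (i) and (ii) of $\widehat{\mathcal{F}}$ (monotonicity in each endpoint) together with (iii) then give
\[
\widehat{\mathcal{F}}(x,u_n^-,u_n^+)\le \widehat{\mathcal{F}}(x,a-\varepsilon,b+\varepsilon)\le \widehat{\mathcal{F}}(x,a,b)+2\varepsilon .
\]
This yields the pointwise bound $\limsup_n \widehat{\mathcal{F}}(x,u_n^-(x),u_n^+(x))\le\widehat{\mathcal{F}}(x,u^-(x),u^+(x))$ for $\sigma$-a.e.\ $x$. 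This is exactly why the maximum over $[u^-,u^+]$ is the right choice: an arbitrary $F(x,u^\lambda)$ would not dominate the limit points of $F$ evaluated at values in $[u_n^-,u_n^+]$.

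To integrate this bound we use reverse Fatou. Since $\widehat{\mathcal{F}}(x,0,0)=0$, property (iii) gives $|\widehat{\mathcal{F}}(x,u_n^-,u_n^+)|\le |u_n^-|+|u_n^+|\le 2g$ with $g\in L^1(\Omega;\sigma)$, by Definition~\ref{d:conv}(iii). As $\varphi\ge 0$ is bounded with compact support, $\varphi\cdot 2g$ is a $\sigma$-integrable majorant, so reverse Fatou applies and yields the limsup inequality above.

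\emph{Conclusion.} Combine the two steps with the minus sign in front of the $\sigma$-integral. Taking the liminf of $\pscal{\Lpair{\bsmall(\cdot,u_n),Du_n}}{\varphi}$, the first term converges and minus the second term has liminf at least $-\int\varphi\,\widehat{\mathcal{F}}(x,u^-,u^+)\,d\sigma$. This gives \eqref{f:lsc1}.

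The only delicate point is the measure-theoretic bookkeeping: ensuring that the precise values $u_n^\pm$ and $u^\pm$ are defined and that \eqref{f:lahti} holds simultaneously for all $n$ off a single $\sigma$-null set. This is handled by $\sigma\in\radonh$ and a countable union, as noted above.
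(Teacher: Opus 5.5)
Your proposal is correct and follows essentially the same route as the paper: convergence of the term $\int_\Omega \B(x,u_n)\cdot\nabla\varphi\,dx$, the pointwise bound $\limsup_n \widehat{\mathcal{F}}(x,u_n^-,u_n^+)\le\widehat{\mathcal{F}}(x,u^-,u^+)$ obtained from the PR-property together with the monotonicity and Lipschitz properties of $\widehat{\mathcal{F}}$, and reverse Fatou with the majorant $2g\varphi$. The only cosmetic difference is in the first term: you use the Lipschitz estimate $|\B(x,s)-\B(x,t)|\le\|\bsmall\|_\infty|s-t|$, which gives the convergence directly from $u_n\to u$ in $L^1$, whereas the paper invokes generalized dominated convergence via the growth bound $|\B(x,t)|\le C|t|$.
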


\begin{proof}
	Since $|\B(x, u_n(x)| \leq C |u_n(x)|$ (see \eqref{f:LipB}),
	by the (generalized) Dominated Convergence Theorem we have that
	\begin{equation}\label{f:limsup3}
		\lim_{n\to +\infty} \int_\Omega \B(x,u_{n}(x)) \cdot \nabla\varphi\, dx = \int_\Omega \B(x,u(x)) \cdot \nabla\varphi\, dx.
	\end{equation} 
	Let $x\in \Omega$ be such that \eqref{f:lahti} holds true. Then for every $\eps>0$ there exists $n_\eps\in\N$ such that
	\[
	u^-(x) -\eps \leq u_n^-(x) \leq  u_n^+(x) \leq u^+(x)+\eps, \qquad \quad \forall n\geq n_\eps.
	\]
	Using the monotonicity properties and the 1-lipschitzianity of $\widehat{\mathcal{F}}$, we have
	\[
	\widehat{\mathcal{F}}(x,u_{n}^-(x),u_{n}^+(x)) \leq \widehat{\mathcal{F}}(x,u^-(x)-\eps,u^+(x)+\eps) \leq 
	\widehat{\mathcal{F}}(x,u^-(x),u^+(x)) + 2\eps, \quad \forall n\geq n_\eps,
	\]
	which implies that
	\begin{equation}\label{f:liminf4}
	 \limsup_{n\to +\infty} \widehat{\mathcal{F}}(x,u_{n}^-(x),u_{n}^+(x)) \leq 
	 \widehat{\mathcal{F}}(x,u^-(x),u^+(x)), \qquad \text{for $\sigma$-a.e.}\ x \in \Omega.
	\end{equation}
Moreover, by Definition~\ref{d:conv}(iii), we also infer that
\[
\widehat{\mathcal{F}}(x,u_{n}^-(x),u_{n}^+(x)) \leq |u_n^+(x)| + |u_n^-(x)| \leq 2 g(x) \in L^1(\Omega;\sigma).
\]
and hence, by Fatou's Lemma and \eqref{f:liminf4}, we get
\begin{equation}\label{f:limsup1}
	\begin{split}
		& \lim_{n\to +\infty} \int_{\Omega}  \widehat{\mathcal{F}}(x,u_{n}^-(x),u_{n}^+(x))\, \varphi(x)\, d\sigma 
		\\
		& \leq
		\int_{\Omega} \left[ \limsup_{n\to +\infty} \widehat{\mathcal{F}}(x,u_{n}^-(x),u_{n}^+(x)) \right] \varphi(x)\, d\sigma  
		\leq 
		\int_{\Omega} \widehat{\mathcal{F}}(x,u^-(x),u^+(x))  \varphi(x)\, d\sigma.
	\end{split}
\end{equation}
Finally, 
from \eqref{f:limsup3} and \eqref{f:limsup1}
we conclude that
\[
\begin{split}
	\liminf_{n\to +\infty} & \pscal{\Lpair{\bsmall(\cdot,u_n),Du_n)}}{\varphi} \\ 
	& =
	-\limsup_{n\to +\infty}\left(
	\int_\Omega \B(x,u_{n}(x)) \cdot \nabla\varphi\, dx +
	\int_{\Omega}   \widehat{\mathcal{F}}(x,u_{n}^-(x),u_{n}^+(x))\, \varphi(x)\, d\sigma \right)
	\\ & 
	\geq
	-\int_\Omega \B(x,u(x)) \cdot \nabla\varphi\, dx -
	\int_{\Omega}   \widehat{\mathcal{F}}(x,u^-(x),u^+(x))\, \varphi(x)\, d\sigma
	\\ & =
	\pscal{\Lpair{\bsmall(\cdot,u),Du)}}{\varphi} 
\end{split}
\]
i.e., \eqref{f:lsc1} holds true.
\end{proof}

Taking into account Remark \ref{strictly} the following result holds.
\begin{corollary}
Let $\bsmall$
satisfy assumptions $(b1)$-$(b4)$.
Then,
for every $u_n, u\in \BVL[\Omega]$ such that $(u_n)$ strictly converges to $u$  in $BV$,
and satisfies property (iii) in Definition~\ref{d:conv},
the lower semicontinuity property \eqref{f:lsc1} holds.
\end{corollary}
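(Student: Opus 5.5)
The corollary follows by combining Theorem~\ref{t:lsc} with Remark~\ref{strictly}, plus a standard subsequence argument to pass from the liminf along a subsequence to the liminf along the whole sequence.

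Fix $\varphi\in C^\infty_c(\Omega)$ with $\varphi\geq 0$. Set
\[
\ell:=\liminf_{n\to+\infty}\pscal{\Lpair{\bsmall(\cdot,u_n),Du_n}}{\varphi}.
\]
First extract a subsequence $(u_{n_k})$ along which this liminf is attained as a limit. Strict convergence is inherited by subsequences, so $u_{n_k}\to u$ strictly in $BV$. In particular $u_{n_k}\to u$ in $L^1(\Omega)$, which is property (i) of Definition~\ref{d:conv}.

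By Remark~\ref{strictly} (i.e.\ \cite[Theorem 3.2, Corollary 3.3]{La}), we can pass to a further subsequence, still denoted $(u_{n_k})$, satisfying the PR-property \eqref{f:lahti} $\H^{N-1}$-a.e.\ in $\Omega$. Since $\sigma\in\radonh$ by Remark~\ref{r:sigmah}, the exceptional $\H^{N-1}$-null set is also $\sigma$-null. Hence \eqref{f:lahti} holds $\sigma$-a.e., which is property (ii). Property (iii) is assumed for the whole sequence, so it holds for every subsequence with the same dominating function $g\in L^1(\Omega;\sigma)$. Therefore $(u_{n_k})\in\Conv$.

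Applying Theorem~\ref{t:lsc} to $(u_{n_k})$ gives
\[
\pscal{\Lpair{\bsmall(\cdot,u),Du}}{\varphi}\leq\liminf_{k}\pscal{\Lpair{\bsmall(\cdot,u_{n_k}),Du_{n_k}}}{\varphi}=\ell,
\]
which is \eqref{f:lsc1} for the original sequence.

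The only delicate point is the order of the extractions. The subsequence realizing the liminf must be chosen first, and the PR-subsequence of Remark~\ref{strictly} second. Otherwise one only obtains lower semicontinuity along some subsequence, not along the full sequence.
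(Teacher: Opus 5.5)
Your proof is correct and follows the paper's argument: the paper obtains the corollary simply by invoking Remark~\ref{strictly} together with Theorem~\ref{t:lsc}. You also spell out a step the paper leaves implicit. You first extract a subsequence realizing the $\liminf$ and only then the PR-subsequence, and this ordering is what gives \eqref{f:lsc1} for the full sequence rather than only along some subsequence.
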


We now prove the converse of Theorem~\ref{t:lsc}.

\begin{theorem}
	\label{t:lscvice}
	Let $\bsmall$
	satisfy assumptions $(b1)$-$(b4)$.
	Let $\lambda\in\spacelambda$, and $u\in \BVL[\Omega]$. 
Assume that, 
for every sequence $(u_n)\in\Conv$,
	it holds that
	\begin{equation}\label{f:lsc1333}
		\pscal{\pair{\bsmall(\cdot,u),Du)}}{\varphi}
		\leq
		\liminf_{n\to +\infty} \pscal{\pair[{\lambda}]{\bsmall(\cdot,u_n),Du_n}}{\varphi}
		\qquad
		\forall \varphi\in C^\infty_c(\Omega),
		\
		\varphi\geq 0.
	\end{equation}
Then we have that $\lambda\in \laM[u]$, i.e.
$\pair{\bsmall(\cdot,u), Du}=\Lpair{\bsmall(\cdot,u), Du}$.
\end{theorem}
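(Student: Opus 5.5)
The plan is to test the lower semicontinuity hypothesis \eqref{f:lsc1333} along one specific sequence in $\Conv$, namely the Comi--Leonardi approximation of $u$ built with the \emph{maximizing} selection $\widehat\lambda$ from \eqref{f:uhat}. Let $(u_n):=(\ucl[n]^{(\widehat\lambda)})$ be the sequence given by Theorem~\ref{l:CL} with $\widehat\lambda$ in place of $\lambda$. By Theorem~\ref{l:CL}, $(u_n)\in\Conv$. Property (iii) of Definition~\ref{d:conv} holds with $g$ constant, thanks to the uniform $L^\infty$ bound and the finiteness of $\sigma$ on compacts; this suffices locally, since $\varphi$ has compact support. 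Hence \eqref{f:lsc1333} applies to this sequence.

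Next I compute the right-hand side of \eqref{f:lsc1333}. By Lemma~\ref{l:limCL}, applied with $\widehat\lambda$ as the approximation selection and $\Lambda=\lambda$, we get
\[
\lim_{n\to\infty}\pscal{\pair{\bsmall(\cdot,u_n),Du_n}}{\varphi}=\pscal{\pair[\widehat\lambda]{\bsmall(\cdot,u),Du}}{\varphi}=\pscal{\Lpair{\bsmall(\cdot,u),Du}}{\varphi}.
\]
The last equality uses $\widehat\lambda\in\laM[u]$. Inserting this into \eqref{f:lsc1333} gives $\pscal{\pair{\bsmall(\cdot,u),Du}}{\varphi}\le\pscal{\Lpair{\bsmall(\cdot,u),Du}}{\varphi}$ for every $\varphi\ge0$. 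Combined with the reverse inequality \eqref{f:low}, this yields equality of the two measures. By the definitions \eqref{f:pairlambda} and \eqref{f:pairlsc}, equality means
\[
\bigl(\widehat{\mathcal F}(x,u^-,u^+)-F(x,u^\lambda)\bigr)\,\sigma=0
\]
as measures. Since the integrand is nonnegative $\sigma$-a.e., it vanishes $\sigma$-a.e., which is exactly $\lambda\in\laM[u]$.

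The main point to check carefully is the applicability of Lemma~\ref{l:limCL}. Specifically, for each $n$ the pairing of the smooth function $u_n$ must not depend on the selection: since $S_{u_n}=\emptyset$, we have $u_n^\lambda=u_n$, so the limit is governed by the pointwise convergence $u_n\to u^{\widehat\lambda}$ $\Haus$-a.e., hence $\sigma$-a.e. by Remark~\ref{r:sigmah}. This $\sigma$-a.e. convergence, together with dominated convergence via \eqref{f:estiv} and the $L^\infty$ bound, gives the claim. A secondary check is that the PR-property \eqref{f:lahti} holds $\sigma$-a.e. for this sequence. This follows from Remark~\ref{strictly} after passing to a subsequence, or directly from point (2) of Theorem~\ref{l:CL}, since $u^{\widehat\lambda}\in[u^-,u^+]$.
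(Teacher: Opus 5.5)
Your proposal is correct and follows the paper's proof: you test the hypothesis on the Comi--Leonardi sequence built with the maximizing selection $\widehat\lambda$ from \eqref{f:uhat}, use Lemma~\ref{l:limCL} to identify the limit with $\Lpair{\bsmall(\cdot,u),Du}$, and conclude with \eqref{f:low}. Your extra checks only make explicit what the paper leaves implicit: the PR-property via point (2) of Theorem~\ref{l:CL}, and the fact that equality of the two measures forces $F(x,u^\lambda)=\widehat{\mathcal{F}}(x,u^-,u^+)$ $\sigma$-a.e.
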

\begin{proof}
Let $\lambda$ satisfy \eqref{f:lsc1333}. 
Let $\uh$ be the function defined in~\eqref{f:uhat},
and let $u_n\in C^\infty(\Omega)\cap\Conv[\uh]$ be the sequence considered in Theorem~\ref{l:CL}.
By Lemma~\ref{l:limCL} we have that 
\begin{equation*}
\lim_{n \to \infty} \pscal{\pair{\bsmall(\cdot,\ucl),D\ucl}}{\varphi}= \pscal{\Lpair{\bsmall(\cdot,u),Du}}{\varphi},
\quad \forall \varphi\in C^1_c(\Omega).
\end{equation*}
Hence \eqref{f:lsc1333} gives
\[
\pair{\bsmall(\cdot,u), Du}\leq \Lpair{\bsmall(\cdot,u), Du},
\]
and the conclusion now follows recalling~\eqref{f:low}. 
\end{proof}

\begin{remark}
In the autonomous case~\eqref{autonomous},
considered in Remark~\ref{r:auto}, it is known that, in fact,
for every $u_n, u\in \BVL[\Omega]$ such that 
$(u_n)$ converges to $u$ in $L^1$
	it holds that
	\begin{equation*}%\label{f:lsc1333000}
		\pscal{(\bsmall(u),Du))}{\varphi}
		=
		\lim_{n\to +\infty} \pscal{(\bsmall(u_n),Du_n)}{\varphi}
		\qquad
		\forall \varphi\in C^1_c(\Omega),
		\
		\varphi\geq 0
	\end{equation*}
	and 	
\begin{equation*}%\label{f:lscdd}
		\int_\Omega|(\bsmall(u),Du)|
		\leq
		\liminf_{n\to +\infty} \int_\Omega|(\bsmall(u_n),Du_n)|\,,
\end{equation*}
	(see \cite[Lemma 9]{DeCic}).
\end{remark}

In the following proposition we give an explicit representation of the jump part
of the lower-semicontinuous pairing.

\begin{proposition}
	\label{p:tracesonJ22}
	Let $\bsmall$ satisfy assumptions $(b1)$-$(b4)$, 
let $u\in\BV\cap L^\infty(\Omega)$ and assume $J_u$ oriented in such a way that $u^i=u^+$ and $u^e=u^-$. Then 
\begin{equation*}
	%\label{f:tronJ12}
	\Lpair{\bsmall(\cdot,u), Du}  \res J_u
	=\Trl[L]{\bsmall}{J_u}\,  |D^ju|,
\end{equation*}
where
\[
\begin{split}
\Trl[L]{\bsmall}{J_u}(x) :=
\min_{s\in[u^-(x),u^+(x)]} %\Big\{ 
\mean{u^-(x)}^{u^+(x)} &
\Big[\chi_{[u^-(x), s]}(t)\,\Trm{\bsmall_t}{J_u}(x)
\\ & 
+\chi_{[s, u^+(x)]}(t)\,\Trp{\bsmall_t}{J_u}(x)\Big]\, dt
.
\end{split}
\]
\end{proposition}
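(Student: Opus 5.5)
The plan is to reduce everything to Proposition~\ref{p:tracesonJ} by choosing the representative $\uh$ from \eqref{f:uhat}. By construction there is $\widehat\lambda\in\laM[u]$ with $\widehat{\mathcal{F}}(x,u^-,u^+)=F(x,\uh)$ $\sigma$-a.e. Hence
\[
\Lpair{\bsmall(\cdot,u),Du}=\pair[\widehat\lambda]{\bsmall(\cdot,u),Du},
\]
and by \eqref{f:tronJ} its restriction to $J_u$ equals $\Trl[\widehat\lambda]{\bsmall_t}{J_u}$ averaged over $[u^-,u^+]$ times $|D^ju|$. It therefore suffices to show that, for $\Haus$-a.e.\ $x\in J_u$, the value $s=\uh(x)$ realizes the minimum over $s\in[u^-(x),u^+(x)]$ in the definition of $\Trl[L]{\bsmall}{J_u}(x)$.

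To see this, fix $x\in J_u$ outside the $\Haus$-null exceptional sets. These are the set where Proposition~\ref{p:lip} fails (we need $\beta^{i,e}(x,\cdot)$ Lipschitz), the set where Remark~\ref{r:trbsmall} fails (we need $\beta^{i,e}(x,t)=\int_0^t\gamma^{i,e}(x,s)\,ds$ for all $t$), and the set where \eqref{mmm} fails. On $J_u$ the measure $\sigma$ is absolutely continuous with respect to $\Haus\res J_u$ by Remark~\ref{r:sigmah} and the trace bound. Using \eqref{mmm} in the form $F(x,t)\,\sigma\res J_u=[\beta^i(x,t)-\beta^e(x,t)]\,\Haus\res J_u$, we write $\sigma\res J_u=h\,\Haus\res J_u$ with $h\ge0$. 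Then $hF(x,t)=\beta^i(x,t)-\beta^e(x,t)$ for $\Haus$-a.e.\ $x\in J_u$, first for each rational $t$. Both sides are continuous in $t$: the left by \eqref{f:estiv}, the right by Proposition~\ref{p:lip}. So the identity holds for all $t$ off a single null set; this countable-to-continuum step is the only delicate measure-theoretic point, and I expect it to be the main obstacle.

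Now compute, for $s\in[u^-,u^+]$,
\[
\Phi(s):=\int_{u^-}^{u^+}\big[\chi_{[u^-,s]}\gamma^e+\chi_{[s,u^+]}\gamma^i\big]dt
=\beta^i(x,u^+)-\beta^e(x,u^-)-\big(\beta^i(x,s)-\beta^e(x,s)\big).
\]
This gives $\Phi(s)=\beta^i(x,u^+)-\beta^e(x,u^-)-h(x)F(x,s)$. Since $h\ge0$, minimizing $\Phi$ over $s$ amounts to maximizing $F(x,s)$ wherever $h(x)>0$.

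At $\sigma$-a.e.\ point with $h>0$, the choice $s=\uh(x)$ attains $\widehat{\mathcal{F}}(x,u^-,u^+)=\max F$. Where $h=0$, $\Phi$ is constant in $s$, so any $s$, in particular $\uh(x)$, is a minimizer. Hence $\Phi(\uh(x))=\min_s\Phi(s)$ for $\Haus$-a.e.\ $x\in J_u$, because a $\sigma$-null subset of $\{h>0\}$ is $\Haus$-null there.

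Dividing by $u^+-u^-$ turns $\Phi$ into the mean integral appearing in the definition of $\Trl[L]{\bsmall}{J_u}$, and this closes the argument via \eqref{f:tronJ}.
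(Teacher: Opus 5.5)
Your proof is correct and is essentially the paper's computation. The paper works directly from \eqref{f:pairlsc}: it rewrites $\Div(\B(x,u))\res J_u$ via the traces and $\widehat{\mathcal{F}}(x,u^-,u^+)\,\sigma\res J_u$ as $\max_{s\in[u^-,u^+]}[\beta^i(x,s)-\beta^e(x,s)]\,\Haus\res J_u$, and then turns $-\max$ into $\min$. You instead pass through the selection $\widehat\lambda\in\laM[u]$ and Proposition~\ref{p:tracesonJ}, which is a small detour. Your explicit nonnegative density $h$ of $\sigma\res J_u$, together with the passage from rational to all $t$, justifies the step of pulling $\max_s$ through the identity $hF(x,s)=\beta^i(x,s)-\beta^e(x,s)$; the paper's ``straightforward computation'' uses this step only tacitly.
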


\begin{proof}
The result follows from a straightforward computation:
	\[
		\begin{split}
			\Lpair{\bsmall(\cdot,u), Du}  \res J_u  &=
			\Big[\beta^i(x,u^+(x))- \beta^e(x,u^-(x))-\max_{s\in[u^-,u^+]}[\beta^i(x,s)-\beta^e(x,s)] \Big]\H^{N-1} \res J_u
			\\  &=\min_{s\in[u^-,u^+]}\Big[\beta^i(x,u^+(x))- \beta^e(x,u^-(x))-\beta^i(x,s)+\beta^e(x,s) \Big]\H^{N-1} \res J_u
			\\  &=\min_{s\in[u^-,u^+]}\left[\int_{u^-}^{s}\gamma^e(x,t)\,dt+\int_{s}^{u^+}\gamma^i(x,t)\,dt\right]
			\H^{N-1} \res J_u
			\\  &=\min_{s\in[u^-,u^+]}\left[\int_{u^-}^{u^+}[\chi_{[u^-, s]}(t)\,\gamma^e(x,t)
+\chi_{[s, u^+]}(t)\,\gamma^i(x,t)]\,dt\right] \H^{N-1} \res J_u
			\\  
& =\min_{s\in[u^-,u^+]}\left[\mean{u^-}^{u^+}[\chi_{[u^-, s]}(t)\,\gamma^e(x,t)+\chi_{[s, u^+]}(t)\,\gamma^i(x,t)]\,dt\right] |D^ju|.
		\end{split}
	\]
\end{proof}

The total variation of the pairing measure 
$\Lpair{\bsmall(\cdot,u),Du}$ is not, in general,
lower semicontinuous along sequences
$(u_n)\in\Conv$.
Therefore, we proceed to characterize the selections
$\lambda\in\spacelambda$ that guarantee it.
More precisely, 
given $u\in\BVL[\Omega]$,
for every $x\in A:= \Omega\setminus(S_u\setminus J_u)$
let us define the function
\begin{equation}
\label{f:zeta}
\zeta(x,t) :=
\int_{u^-(x)}^{t}\gamma^e(x,\ell)\,d\ell+\int_{t}^{u^+(x)}\gamma^i(x,\ell)\,d\ell,
\qquad
t\in[u^-(x),u^+(x)]
\end{equation}
(see, for comparison, Proposition~\ref{p:tracesonJ22}),
and let
\[
g(x) := \min_{s\in [0,1]} |\zeta(x, (1-s) u^-(x) + s\, u^+(x))|\,,
\qquad x\in A.
\]
The function $g$ is Borel-measurable, since it can be written as the infimum
of a countable family of Borel-measurable functions:
\[
g(x) = \inf_{q\in [0,1]} |\zeta(x, (1-q) u^-(x) + q\, u^+(x)),
\qquad x\in A.
\]
Hence, the function
\[
\psi(x,s) :=  |\zeta(x, (1-s) u^-(x) + s\, u^+(x))| - g(x),
\qquad
(x,s) \in A\times [0,1],
\]
satisfies the assumptions of Lemma~\ref{l:meas},
so that there exists a function $\lambda\in\spacelambda[A]$,
that can be extended to a function $\lambda^V\in\spacelambda$,
such that $\psi(x, \lambda^V(x)) = 0$ for every $x\in A$.

By construction, the function $u^V := u^{\lambda^V}$ satisfies
\[
|\zeta(x, \Vu(x))| = \min_{t\in [u^-(x), u^+(x)]} |\zeta(x, t)|\,,
\qquad
\text{for $\sigma$-a.e.}\ x\in\Omega.
\]
The class of selections $\lambda\in\spacelambda$ 
for which $|\Vpair{\bsmall(\cdot,u), Du}|$ is lower semicontinuous along sequences
$(u_n)\in\Conv$ is 
\begin{equation*}
	\laV[u]  := \Big\{\lambda\in\spacelambda\colon 
	\lambda(x) = \dfrac{u^V(x) - u^-(x)}{u^+(x)-u^-(x)}
\ \text{for $\sigma$-a.e.}\ x\in J_u\Big\}.
\end{equation*}
This class is not empty since, by construction, $\lambda^V\in \laV[u]$.
We can then define the following pairing measure,
\begin{equation}
\label{f:vpair}
\Vpair{\bsmall(\cdot,u), Du}= \pair{\bsmall(\cdot,u), Du}, \qquad \forall\lambda \in \laV[u],
\end{equation}
which is clearly independent of the choice of
$\lambda\in\laV[u]$.
Moreover, it is straightforward to check that,
\[
|\Vpair{\bsmall(\cdot,u),Du}|
\leq |\pair{\bsmall(\cdot,u),Du}|
\qquad\forall \lambda\in\spacelambda.
\]

The following result,
analogous to Theorem~\ref{t:lsc}, holds.

\begin{theorem}
	\label{t:lscv}
	Let $\bsmall$ satisfy assumptions $(b1)$-$(b4)$. 
Fixed $u\in \BVL[\Omega]$, let $\Vpair{\bsmall(\cdot,u),Du}$ be the pairing defined in \eqref{f:vpair}.
Then, for every $(u_n)\in\Conv$,
it holds that
\begin{equation}\label{f:lsc2}
	\int_\Omega|\Vpair{\bsmall(\cdot,u),Du}|
	\leq
	\liminf_{n\to +\infty} \int_\Omega|\Vpair{\bsmall(\cdot,u_n),Du_n)}|\,.
\end{equation}
\end{theorem}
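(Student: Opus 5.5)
The plan is to reduce \eqref{f:lsc2} to the weak$^*$ lower semicontinuity of the total variation, by identifying every weak$^*$ limit of the measures $\Vpair{\bsmall(\cdot,u_n),Du_n}$ with some nonlinear $\lambda'$--pairing of $u$. The minimality $|\Vpair{\bsmall(\cdot,u),Du}|\le|\pair[\lambda']{\bsmall(\cdot,u),Du}|$ then closes the argument. First I will pass to a subsequence (not relabelled) along which the $\liminf$ in \eqref{f:lsc2} is a limit. Write $\lambda_n\in\laV[u_n]$ and $w_n:=u_n^{\lambda_n}$, so that $\Vpair{\bsmall(\cdot,u_n),Du_n}=\pair[\lambda_n]{\bsmall(\cdot,u_n),Du_n}$. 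For every $\varphi\in C^1_c(\Omega)$ we have
\[
\pscal{\Vpair{\bsmall(\cdot,u_n),Du_n}}{\varphi}=-\int_\Omega \B(x,u_n)\cdot\nabla\varphi\,dx-\int_\Omega F(x,w_n)\,\varphi\,d\sigma .
\]
Exactly as in \eqref{f:limsup3}, the first term converges to $-\int_\Omega\B(x,u)\cdot\nabla\varphi\,dx$, by \eqref{f:LipB} and $L^1$ convergence.

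For the second term, note that $w_n$ lies between $u_n^-$ and $u_n^+$, so $|F(x,w_n)|\le|w_n|\le g$ $\sigma$-a.e. by \eqref{f:estiv}, $F(x,0)=0$ and Definition~\ref{d:conv}(iii). The family $(F(\cdot,w_n))$ is therefore dominated in $L^1(\Omega;\sigma)$, hence equi-integrable. By Dunford--Pettis, after a further subsequence it converges weakly in $L^1(\Omega;\sigma)$ to some $h$. The key claim, which I expect to be the main obstacle, is
\[
\min_{t\in[u^-(x),u^+(x)]}F(x,t)\le h(x)\le \max_{t\in[u^-(x),u^+(x)]}F(x,t)\qquad\text{for $\sigma$-a.e. }x,
\]
with $h(x)=F(x,\ut(x))$ off $S_u$. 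To prove it I will use Mazur's lemma: convex combinations of the tails $\{F(\cdot,w_k)\}_{k\ge n}$ converge strongly in $L^1(\sigma)$, and hence $\sigma$-a.e. along a subsequence. By the PR-property \eqref{f:lahti}, for $\sigma$-a.e.\ $x$ and every $\eps>0$ we have $w_k(x)\in[u^-(x)-\eps,u^+(x)+\eps]$ for $k$ large. Consequently every such convex combination lies in the interval $[\min F,\max F]$ taken over $[u^--\eps,u^++\eps]$. Letting $\eps\to0$ and using the continuity of $F(x,\cdot)$ gives the claim, since $\sigma\in\radonh$ and $\Haus(S_u\setminus J_u)=0$.

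By the intermediate value theorem, for $\sigma$-a.e.\ $x\in J_u$ there is $s\in[0,1]$ with $F(x,(1-s)u^-+su^+)=h(x)$. Applying Lemma~\ref{l:meas} to $\psi(x,s):=F(x,(1-s)u^-(x)+su^+(x))-h(x)$ on $A=\Omega\setminus(S_u\setminus J_u)$ yields a Borel $\lambda'\in\spacelambda$ with $h=F(x,u^{\lambda'})$ $\sigma$-a.e. It follows that $\Vpair{\bsmall(\cdot,u_n),Du_n}\to\pair[\lambda']{\bsmall(\cdot,u),Du}$ in the sense of distributions. Since the total variations are bounded by Theorem~\ref{chainb4} (for this I will add the natural bound $\sup_n|Du_n|(\Omega)<\infty$, implicit in the finiteness of the $\liminf$ or else treated by a trivial case), the convergence also holds weakly$^*$ as measures. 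Lower semicontinuity of the total variation under weak$^*$ convergence, in the form $|\mu|(\Omega)=\sup\{\pscal{\mu}{\varphi}:\varphi\in C^1_c(\Omega),\ |\varphi|\le1\}$, gives
\[
|\pair[\lambda']{\bsmall(\cdot,u),Du}|(\Omega)\le\liminf_n\int_\Omega|\Vpair{\bsmall(\cdot,u_n),Du_n}|.
\]
Finally, the inequality $|\Vpair{\bsmall(\cdot,u),Du}|\le|\pair[\lambda']{\bsmall(\cdot,u),Du}|$ holds for every $\lambda'$. Indeed, both measures coincide off $J_u$ by \eqref{nosalti}, while on $J_u$ their densities with respect to $\Haus$ are $\zeta(x,u^V)$ and $\zeta(x,u^{\lambda'})$ by Proposition~\ref{p:tracesonJ}, and $|\zeta(x,u^V)|$ is minimal by construction. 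This concludes \eqref{f:lsc2}.
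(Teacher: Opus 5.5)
Your proof is correct, but it takes a genuinely different route from the paper's.

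\textbf{How the two routes differ.} The paper never identifies limits of the pairings of $u_n$. It uses the pointwise identity $\min_t|\zeta(x,t)|=(\min_t\zeta(x,t))^+ + (-\max_t\zeta(x,t))^+$ on $J_u$ (a consequence of the intermediate value theorem), together with the linearity in $\bsmall$ of the diffuse part. This gives $|\Vpair{\bsmall(\cdot,u),Du}|(\Omega)=G^+(u)+H^+(u)$, where $G^+(u)$ and $H^+(u)$ are the masses of the positive parts of $\Lpair{\bsmall(\cdot,u),Du}$ and $\Lpair{-\bsmall(\cdot,u),Du}$. Each is a supremum over $0\le\varphi\le1$ of functionals that are lower semicontinuous by Theorem~\ref{t:lsc}, and superadditivity of $\liminf$ concludes.

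You instead prove a closure property: for arbitrary selections $\lambda_n$, every distributional cluster point of $\pair[\lambda_n]{\bsmall(\cdot,u_n),Du_n}$ along a sequence in $\Conv$ is a $\lambda'$-pairing of $u$. You then invoke the minimality of the $V$-selection.

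\textbf{What each buys.} The paper's argument is shorter and needs neither compactness nor a new measurable selection, but it relies on the specific splitting of the absolute value. Yours is more structural. The same closure statement also yields Theorem~\ref{t:lsc}, from $h\le\widehat{\mathcal{F}}(x,u^-,u^+)$. More generally, it gives lower semicontinuity at the optimal pairing of any functional that is lower semicontinuous under distributional convergence and whose minimum over the $\lambda$-pairings of $u$ is attained at the chosen selection.

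\textbf{Two technical remarks.} First, Mazur's lemma can be avoided. Combine $F(x,w_n)\le\widehat{\mathcal{F}}(x,u_n^-,u_n^+)$ with the reverse-Fatou estimate \eqref{f:limsup1}; tested against $\varphi\ge0$, this gives $\int h\varphi\,d\sigma\le\int\widehat{\mathcal{F}}(x,u^-,u^+)\varphi\,d\sigma$ directly, and the bound for the minimum is symmetric. Second, Lemma~\ref{l:meas} should be applied on a Borel set of full $\sigma$-measure where $F(x,\cdot)$ is continuous and your bounds on $h$ hold, with $\lambda'$ extended arbitrarily elsewhere.

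\textbf{A correction.} Delete the parenthetical hypothesis $\sup_n|Du_n|(\Omega)<\infty$. It is not part of Definition~\ref{d:conv}. It is not implied by finiteness of the $\liminf$: for $\bsmall\equiv0$ all pairings vanish while $|Du_n|(\Omega)$ may blow up. It is also not needed. Set $\mu:=\pair[\lambda']{\bsmall(\cdot,u),Du}$ and $\mu_n:=\Vpair{\bsmall(\cdot,u_n),Du_n}$. By Theorem~\ref{chainb4}, $\mu$ is a finite measure, so $|\mu|(\Omega)$ is the supremum of $\langle\mu,\varphi\rangle$ over $\varphi\in C^1_c(\Omega)$ with $|\varphi|\le1$. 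For each fixed such $\varphi$, distributional convergence alone gives $\langle\mu,\varphi\rangle=\lim_n\langle\mu_n,\varphi\rangle\le\liminf_n|\mu_n|(\Omega)$. This is exactly the form of lower semicontinuity you use, so your argument covers every $(u_n)\in\Conv$ as stated.
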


\begin{proof} 
We can partition
$J_u$ into three disjoint Borel sets $J_u^m$, $J_u^M$, and $J_u^0$
such that
\[
\begin{cases}
m_x := \min_{t\in [u^-(x), u^+(x)]} \zeta(x,t) > 0
& \forall x\in J_u^m,
\\
M_x := \max_{t\in [u^-(x), u^+(x)]} \zeta(x,t) < 0 
&\forall x\in J_u^M,
\\
m_x \leq 0 \leq M_x
& \forall x\in J_u^0,
\end{cases}
\]
where $\zeta$ is the function defined in ~\eqref{f:zeta}.
Given $\lambda\in\laV[u]$, the density $\Theta_V$
of $|\Vpair{\bsmall(\cdot,u),Du}|$ with respect to $|Du|$
satisfies
\begin{equation}
\label{f:ThetaV}
{\Theta_V}(x) = |\zeta(x, \Vu(x))| =
\begin{cases}
m_x, &\text{for $\sigma$-a.e.\ $x\in J_u^m$},
\\
-M_x, &\text{for $\sigma$-a.e.\ $x\in J_u^M$},
\\
0, &\text{for $\sigma$-a.e.\ $x\in J_u^0$}.
\end{cases}
\end{equation}

Hence, if we denote by $\Theta_G$
and $\Theta_H$ the densities of the measures
$\left((\bsmall(x, u),Du)_L\right)^+$ 
and
$\left((-\bsmall(x, u),Du)_L\right)^+$ respectively,
then it holds that
\begin{gather*}
	\Theta_G = \Theta_V,
	\quad \text{$\sigma$-a.e.\ on $J^m_u$},
	\quad
	\Theta_G = 0
	\quad \text{$\sigma$-a.e.\ on $J^M_u \cup J^0_u$},
	\\
	\Theta_H = \Theta_V,
	\quad \text{$\sigma$-a.e.\ on $J_u^M$},
	\quad
	\Theta_H = 0
	\quad \text{$\sigma$-a.e.\ on $J^m_u \cup J^0_u$}.
\end{gather*}
whereas $\Theta_V = \Theta_G + \Theta_H$
$\sigma$-a.e.\ on $\Omega\setminus J_u$.

Then 
\[
F(u):=\int_\Omega |(\bsmall(x, u),Du)_V|=G^+(u)+H^+(u),
\]
where
$G^+, H^+\colon \BVL[\Omega]\to [0,+\infty[$ are defined by
\[
\quad
G^+(u):= \int_\Omega \left((\bsmall(x, u),Du)_L\right)^+\,
\qquad 
\qquad
H^+(u):= \int_\Omega\left((-\bsmall(x, u),Du)_L\right)^+\,.
\]

The conclusion then follows once one proves that the functionals $G^+(u)$ and $H^+(u)$ are lower semicontinuous along sequences $(u_n)\in\Conv$. 
Since
$$
G^+(u)=\sup_{\{\varphi\in C^\infty_c(\Omega),
		0\leq\varphi\leq 1\}}\pscal{\Lpair{\bsmall(\cdot,u),Du}}{\varphi},
$$
this is a consequence of Theorem \ref{t:lsc}.
The proof for $H^+$ is similar.
\end{proof}

\section{Pairing as relaxed functional}
\label{s:relax}

For every function $\varphi\in C^1_c(\Omega)$ 
let us consider  the functional 
defined by
\[
\mathcal{F}^\varphi(u):=
\begin{cases}
\displaystyle\int_\Omega \varphi\,\bsmall(x,u)\cdot\nabla u\,dx
& \text{$u\in W^{1,1}(\Omega)$}\cap L^\infty(\Omega),
\\
+\infty
,
& u\in (BV(\Omega)\cap L^\infty(\Omega))
\setminus W^{1,1}(\Omega).
\end{cases}
\]
Moreover, for every $u\in\BVL[\Omega]$
we define the relaxation
\[
\overline{\mathcal{F}^\varphi}(u) :=
\inf
\left\{
\liminf_{n\to +\infty}
\mathcal{F}^\varphi(u_n)\colon
(u_n) \in  W^{1,1}(\Omega)\cap\Conv 
\right\}
\,,
\]
where $\Conv$ is the class introduced in Definition~\ref{d:conv}.

The following result was proved in \cite[Theorem 7.1]{CD5}
under the additional assumption $\Div_x \bsmall_t \in L^1(\Omega)$.
Here we provide a full generalization to the case of
divergence-measure vector fields.

\begin{theorem}
\label{t:limsupG}
Let $\bsmall$
satisfy assumptions $(b1)$-$(b4)$.
Then, 
for every $\varphi\in C^1_c(\Omega)$, it holds that
\begin{equation}\label{f:conv}
\overline{\mathcal{F}^\varphi}(u)=\int_\Omega\varphi\, d (\bsmall(x,u),Du)_L\,,
\qquad
u\in\BVL[\Omega]\,.
\end{equation}
\end{theorem}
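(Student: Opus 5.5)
The plan is to prove the two inequalities $\le$ and $\ge$ in \eqref{f:conv} separately. Throughout I use that for $w\in W^{1,1}(\Omega)\cap L^\infty(\Omega)$ we have $J_w=\emptyset$ and $\Haus(S_w)=0$. Hence $w^\pm=\widetilde w$ $\sigma$-a.e., by Remark~\ref{r:sigmah}, and all pairings of Section~\ref{s:pair} coincide with the absolutely continuous one. By \eqref{f:acp} and \eqref{f:pairlambda} this gives
\[
\mathcal{F}^\varphi(w)=\int_\Omega\varphi\,\bsmall(x,w)\cdot\nabla w\,dx
=-\int_\Omega \B(x,w)\cdot\nabla\varphi\,dx-\int_\Omega F(x,\widetilde w)\,\varphi\,d\sigma .
\]
This formula is the starting point for both inequalities.

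\emph{Upper bound.} Let $\widehat\lambda$ be the selection built before \eqref{f:uhat}, so that $\widehat{\mathcal F}(x,u^-,u^+)=F(x,\uh)$ $\sigma$-a.e. Take the Comi--Leonardi sequence $\ucl[n]$ of Theorem~\ref{l:CL} with $\lambda=\widehat\lambda$. It lies in $C^\infty(\Omega)\cap BV(\Omega)$, is bounded in $L^\infty$, and converges strictly, so it belongs to $W^{1,1}(\Omega)\cap\Conv$. Lemma~\ref{l:limCL} then gives
\[
\mathcal F^\varphi(\ucl[n])\to\langle(\bsmall(\cdot,u),Du)_{\widehat\lambda},\varphi\rangle=\langle\Lpair{\bsmall(\cdot,u),Du},\varphi\rangle .
\]
This holds for every $\varphi\in C^1_c(\Omega)$, with no sign condition, because the lemma passes to the limit by dominated convergence in both terms. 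Therefore $\overline{\mathcal F^\varphi}(u)\le\int_\Omega\varphi\,d\Lpair{\bsmall(\cdot,u),Du}$.

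\emph{Lower bound.} Fix $(u_n)\subset W^{1,1}(\Omega)\cap\Conv$. The term $\int\B(x,u_n)\cdot\nabla\varphi$ converges by \eqref{f:LipB} and dominated convergence, as in \eqref{f:limsup3}. For the $\sigma$-term, the PR-property \eqref{f:lahti} says that $\sigma$-a.e. all cluster points of $\widetilde u_n(x)$ lie in $[u^-(x),u^+(x)]$. Off $J_u$ this interval is the single point $\widetilde u(x)$, so by \eqref{f:estiv} we get $F(x,\widetilde u_n)\to F(x,\widetilde u)=\widehat{\mathcal F}(x,\widetilde u,\widetilde u)$ there. Using the domination (iii) of Definition~\ref{d:conv}, this part converges for every sign of $\varphi$. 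On $\{\varphi\ge0\}\cap J_u$ I argue exactly as in Theorem~\ref{t:lsc}: $\limsup_n F(x,\widetilde u_n)\le\widehat{\mathcal F}(x,u^-,u^+)$, and reverse Fatou gives the needed inequality. It then remains to control $\int_{J_u\cap\{\varphi<0\}}F(x,\widetilde u_n)\,|\varphi|\,d\sigma$ from below by $\int_{J_u\cap\{\varphi<0\}}\widehat{\mathcal F}(x,u^-,u^+)|\varphi|\,d\sigma$.

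\emph{Main obstacle.} The PR-property alone only yields $\liminf_n F(x,\widetilde u_n)\ge\min_{[u^-,u^+]}F(x,\cdot)$ on that set, not $\ge\widehat{\mathcal F}$. So the step on $J_u\cap\{\varphi<0\}$ is the crux. My first attempt is to show that this region carries no mass in the limit. I would combine the identity $F(x,\cdot)\,\sigma\res J_u=[\beta^i-\beta^e](x,\cdot)\,\Haus\res J_u$ from \eqref{mmm} with the trace representation of Proposition~\ref{p:traces}. The aim is to prove that along $W^{1,1}$ sequences in $\Conv$ the values $\widetilde u_n$ are forced toward a maximizer of $F(x,\cdot)$ on $\Haus$-a.e. point of $J_u$ where $\sigma$ has density. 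If that fails, the fallback is to check whether the infimum is really attained by a sign-adapted sequence: use Theorem~\ref{l:CL} with a selection realizing $\max F$ on $\{\varphi\ge0\}$ and $\min F$ on $\{\varphi<0\}$. I would do this before claiming the equality for sign-changing $\varphi$, since such a sequence would give a value strictly below the right-hand side of \eqref{f:conv} whenever $\sigma$ charges $J_u\cap\{\varphi<0\}$ and $F(x,\cdot)$ is nonconstant on $[u^-,u^+]$ there.
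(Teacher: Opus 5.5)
You take the same route as the paper. The upper bound comes from Theorem~\ref{l:CL} with $\widehat\lambda\in\laM[u]$ plus Lemma~\ref{l:limCL}, and it holds for every $\varphi$. The lower bound on $\{\varphi\ge0\}$ comes from the reverse-Fatou argument of Theorem~\ref{t:lsc}. The paper obtains \eqref{f:lb} simply by quoting Theorem~\ref{t:lsc}, but that theorem is stated and proved only for $\varphi\ge0$ (step \eqref{f:limsup1} uses the sign). So the paper's proof establishes \eqref{f:conv} only for nonnegative $\varphi$.

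The case you could not close, $J_u\cap\{\varphi<0\}$, is a defect of the statement, not of your argument. Drop your ``first attempt'': the claim that $\widetilde u_n$ is forced toward a maximizer is false. Your fallback is a genuine counterexample. Concretely, let $\Omega=B_1(0)$, $\Sigma:=\Omega\cap\{x_N=0\}$ and $\bsmall(x,t):=\tfrac12\,\mathrm{sign}(x_N)\,e_N$. Then $\Div_x\bsmall_t=\sigma=\Haus\res\Sigma$, $F(x,t)=t$ and $\B(x,t)=t\,\bsmall(x)$. For $u:=\chi_{\{x_N>0\}}$ you have $J_u=\Sigma$, $u^-=0$, $u^+=1$, $\widehat{\mathcal F}(x,u^-,u^+)=1$ on $\Sigma$ and $\Div(\B(x,u))=\tfrac12\Haus\res\Sigma$. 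Hence $\Lpair{\bsmall(\cdot,u),Du}=-\tfrac12\Haus\res\Sigma$. Take $w_n(x):=\eta(nx_N)$ with $\eta\in C^\infty(\R)$ nondecreasing, $\eta=0$ on $(-\infty,0]$ and $\eta=1$ on $[1,+\infty)$. Then $(w_n)\subset W^{1,1}(\Omega)\cap\Conv$, because $w_n\to u$ in $L^1$, $w_n=0=u^-$ on $\Sigma$ and $|w_n|\le 1$. Moreover
\[
\mathcal F^\varphi(w_n)=\frac12\int_\Omega\varphi(x)\,n\,\eta'(nx_N)\,dx\;\longrightarrow\;\frac12\int_\Sigma\varphi\,d\Haus .
\]
So for $\varphi\le0$ with $\int_\Sigma\varphi\,d\Haus<0$ you get $\overline{\mathcal F^\varphi}(u)\le\frac12\int_\Sigma\varphi\,d\Haus<0<-\frac12\int_\Sigma\varphi\,d\Haus$, and the last quantity is the right-hand side of \eqref{f:conv}.

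Hence \eqref{f:conv} must be restricted to $\varphi\ge0$. Your two ingredients in fact identify the relaxation for every $\varphi$:
\begin{itemize}
\item[(a)] On $J_u\cap\{\varphi<0\}$ the PR-property gives $\liminf_n F(x,\widetilde u_n)\ge\min_{[u^-,u^+]}F(x,\cdot)$. Fatou's lemma, with the lower bound $F(x,\widetilde u_n)\ge -g$ from Definition~\ref{d:conv}(iii), turns this into a lower bound for the integral.
\item[(b)] Theorem~\ref{l:CL} with a selection that maximizes $F(x,u^\lambda)$ on $\{\varphi\ge0\}$ and minimizes it on $\{\varphi<0\}$ gives the matching upper bound. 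This selection is Borel by Lemma~\ref{l:meas}.
\end{itemize}
Together these give
\[
\overline{\mathcal F^\varphi}(u)=-\int_\Omega\B(x,u)\cdot\nabla\varphi\,dx-\int_\Omega\Big(\varphi^+\,\widehat{\mathcal F}(x,u^-,u^+)-\varphi^-\min_{t\in[u^-,u^+]}F(x,t)\Big)\,d\sigma .
\]
This equals the right-hand side of \eqref{f:conv} if and only if $\varphi^-\big(\widehat{\mathcal F}(x,u^-,u^+)-\min_{[u^-,u^+]}F(x,\cdot)\big)=0$ for $\sigma$-a.e.\ $x$.
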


\begin{proof}
Firstly, we notice that Theorem \ref{t:lsc} guarantees the inequality
\begin{equation}\label{f:lb}
\overline{\mathcal{F}^\varphi}(u)\geq\int_\Omega\varphi\, d (\bsmall(x,u),Du)_L.
\end{equation}
On the other hand, 
for every $u\in  \BVL[\Omega]$, we obtain the recovery sequence as follows. 
For $\lambda\in \laM[u]$ be such that  $\widehat{\mathcal{F}}(x,\umeno(x),\upiu(x))= F(x,\Prec{u}(x))$, let  $(\ucl)_{n \in\N}\subset C^\infty(\Omega)\cap \mathcal{A}(\Prec{u})$ be as in Theorem~\ref{l:CL}.
By Lemma \ref{l:limCL} we have that
\begin{equation}\label{f:ls4}
	\lim_{n\to +\infty} \mathcal{F}^\varphi(\ucl)= \overline{\mathcal{F}^\varphi}(u)\,,
\end{equation}
which, combined with \eqref{f:lb}, leads to \eqref{f:conv}.
\begin{equation}\label{f:ls1}
	\begin{split}
		\mathcal{F}^\varphi(\ucl)=&\int_\Omega \varphi\,\bsmall(x,\ucl)\cdot\nabla \ucl\,dx\\
		=&
		-\int_\Omega F(x,\ucl)\varphi\,d\sigma
		-\int_\Omega \B(x, \ucl)\cdot \nabla\varphi\,dx.
	\end{split}
\end{equation}
which, combined with \eqref{f:lb}, leads to \eqref{f:conv}.
\end{proof}

The above relaxation result is clearly related to the semicontinuity results
proved in Theorems~\ref{t:lsc} and~\ref{t:lscvice}.

\medskip
On the other hand,
in view of the semicontinuity result stated in Theorem~\ref{t:lscv},
we conjecture that an analogous relaxation result holds true also for
$|\Vpair{\bsmall(\cdot,u), Du}|$,
where $\Vpair{\bsmall(\cdot,u), Du}$ is the pairing measure introduced in~\eqref{f:vpair}. 

Indeed, this relaxation result, with respect to the $L^1$ convergence,
has been already proved in the linear case
$\B(x,t) = \A(x) \, t$, 
with $\A \in BV(\Omega, \R^N)$.
Specifically, in this setting the function $f(x,\xi) := |\A(x)\cdot\xi|$
satisfies %assumption~(1.5) of Theorem~1.1 in~\cite{ADCF},
the assumptions of Theorem~4.2 in~\cite{ADCF},
where it is proved that the relaxation of the functional
\[
\mathcal{F}(u):=
\begin{cases}
\displaystyle\int_\Omega |\A(x)\cdot\nabla u|\,dx
& \text{$u\in W^{1,1}(\Omega)$}\cap L^\infty(\Omega),
\\
+\infty
,
& u\in (BV(\Omega)\cap L^\infty(\Omega))
\setminus W^{1,1}(\Omega),
\end{cases}
\]
with respect to the $L^1$ convergence,
is in fact
\[
\overline{\mathcal{F}}(u) =
|\Vpair{\A, Du}| (\Omega),
\qquad \forall u\in \BVL[\Omega].
\]
In \cite[formula~(1.7)]{ADCF} the relaxed functional $\overline{\mathcal{F}}$ is expressed
in terms of the function
\[
f^-(x,\xi) = \min\{|\A^+(x)\cdot\xi|,\, |\A^-(x)\cdot\xi|\}.
\]
From the explicit representation given in~\eqref{f:ThetaV}, we have that
$f^-(x, \nu_u)$ coincides with the density $\Theta_V$
of $|\Vpair{\A,Du}|$ with respect to $|Du|$.

\smallskip
The main obstruction in proving the relaxation result 
in the general case is to pass to the limit along the recovering sequence,
as in \eqref{f:ls1}. 
Indeed, due to the presence of the absolute value, we cannot integrate by parts
the term containing $\Div (\B(x, u(x))$
and entirely different techniques should be used.

\bigskip\bigskip
\noindent
{\bf Acknowledgments}. 
The authors are members of  the Istituto Nazionale di Alta Matematica (INdAM), GNAMPA Gruppo Nazionale per l'Analisi Matematica, la Probabilità e le loro Applicazioni, and are partially supported by the INdAM--GNAMPA 2024 Project \textit{Pairing e div-curl lemma: estensioni a campi debolmente derivabili e diﬀerenziazione non locale}.

\def\cprime{$'$}

\end{document}